\numberwithin{equation}{section}
\newtheorem{theorem}{Theorem}[section]  
\newtheorem{lemma}[theorem]{Lemma}
\newtheorem{corollary}[theorem]{Corollary}
\newtheorem{proposition}[theorem]{Proposition}
\newtheorem{remark}[theorem]{Remark}
\newtheorem{definition}[theorem]{Definition}
\newtheorem{assumption}[theorem]{Assumption}
\newtheorem*{gammaRIE}{Property~$\boldsymbol{\gamma}$-\textup{(RIE)}}
\newtheorem*{RIE}{Property~\textup{(RIE)}}
\newcommand{\R}{\mathbb{R}}
\newcommand{\T}{\mathbb{T}}
\newcommand{\N}{{\mathbb{N}}}
\newcommand{\bX}{{\mathbf{X}}}
\newcommand{\X}{{\mathbb{X}}}
\newcommand{\1}{{\mathbf{1}}}
\newcommand{\dd}{{\,\mathrm{d}}}
\newcommand{\cL}{\mathcal{L}}
\renewcommand{\P}{\mathbb{P}}
\renewcommand{\d}{\mathrm{d}}
\title[A rough path approach to pathwise stochastic integration {\`a} la F{\"o}llmer]{A rough path approach to pathwise stochastic integration {\`a} la F{\"o}llmer}
\author[Das]{Purba Das}
\address{Purba Das, King's College London, United Kingdom}
\email{purba.das@kcl.ac.uk}
\author[Kwossek]{Anna P. Kwossek}
\address{Anna P. Kwossek, University of Vienna, Austria}
\email{anna.paula.kwossek@univie.ac.at}
\author[Pr{\"o}mel]{David J. Pr{\"o}mel}
\address{David J. Pr{\"o}mel, University of Mannheim, Germany}
\email{proemel@uni-mannheim.de}
\date{\today}
\begin{document}

\begin{abstract}
  We develop a general framework for pathwise stochastic integration that extends F{\"o}llmer's classical approach beyond gradient-type integrands and standard left-point Riemann sums and provides pathwise counterparts of It{\^o}, Stratonovich, and backward It{\^o} integration. More precisely, for a continuous path admitting both quadratic variation and L{\'e}vy area along a fixed sequence of partitions, we define pathwise stochastic integrals as limits of general Riemann sums and prove that they coincide with integrals defined with respect to suitable rough paths. Furthermore, we identify necessary and sufficient conditions under which the quadratic variation and the L{\'e}vy area of a continuous path are invariant with respect to the choice of partition sequences.
\end{abstract}

\maketitle 

\noindent \textbf{Key words:} F{\"o}llmer integration, invariance of integral, It{\^o} integration, L{\'e}vy area, L{\'e}vy roughness, pathwise integration, quadratic roughness, quadratic variation, rough path, sample path properties, Stratonovich integration.

\noindent \textbf{MSC 2020 Classification:} 60G17; 60H05; 60L20, 26A42.


\section{Introduction}

The classical theory of stochastic integration, built upon probabilistic concepts such as martingales and semimartingales, has proven to be highly successful in probability theory as well as its applications, such as the modeling of random phenomena across various disciplines. Nevertheless, over the past several decades, there has been an ongoing interest in developing a pathwise approach to stochastic integration, where integration is defined directly along individual trajectories, ideally independent of any underlying probability space, see e.g. \cite{Bichteler1981, Karandikar1995, Nutz2012, Davis2018}. This perspective is motivated by both theoretical and practical considerations: it allows for a finer analysis of irregular paths, facilitates comparisons between different stochastic models, and supports applications in settings where probabilistic assumptions may be too rigid or unavailable. For instance, recent developments in pathwise stochastic calculus have been stimulated, in particular, by rough path theory and by applications in mathematical finance under model uncertainty (often also referred to as Knightian uncertainty), see, e.g., \cite{Lyons1995,Follmer2013,Davis2014,Schied2018}.

As the first deterministic analogue to It{\^o}'s stochastic integration theory, in the seminal paper \cite{Follmer1981}, H. F{\"o}llmer initiated a pathwise integration theory based on a suitable notion of quadratic variation. More precisely, assuming that a continuous path $X \colon [0,T] \to \R^d$ possesses quadratic variation along a given sequence of partitions $\pi = (\pi^n)_{n \in \N}$ of $[0,T]$ with vanishing mesh size, he showed that for any twice continuously differentiable function $f$ the limit of left-point Riemann sums
\begin{equation}\label{eq: Follmer integral}
  \int_0^t \nabla f(X_s) \dd^\pi X_s := \lim_{n \to \infty} \sum_{[u,v] \in \pi^n } (\nabla f(X_u))^\top (X_{v \wedge t} - X_{u \wedge t}), \qquad t \in [0,T],
\end{equation}
exists, where $\nabla f$ denotes the gradient of $f$, and the integral satisfies a deterministic version of It{\^o}'s formula. The resulting integral $\int_0^t \nabla f(X_s) \dd^\pi X_s $ is often called \emph{F{\"o}llmer integral}. We recall that the sample paths of semimartingales, like Brownian motion and more general It{\^o} processes, do possess such quadratic variation, making the Föllmer integral naturally applicable to these stochastic processes. By now, the F{\"o}llmer integral has been generalized in various directions leading to new pathwise approaches to stochastic analysis, see e.g. \cite{Cont2010,Ananova2017,Cont2019,Chiu2022,Cont2024}, and has found numerous applications in mathematical finance under model uncertainty, where one seeks to avoid relying on a fixed underlying probabilistic model, see e.g.~\cite{Lyons1995,Davis2014,Schied2018,Cuchiero2019}.

In the present paper, we aim at generalizing the notion of the F{\"o}llmer integral in the sense that, for a given $\gamma \in [0,1]$ and a suitable class of not necessarily gradient-type integrands $Y$, the general pathwise integral
\begin{equation}\label{eq:intro}
  \int_0^t Y_s \dd^{\gamma,\pi} X_s := \lim_{n \to \infty} \sum_{[u,v] \in \pi^n} (Y_u + \gamma (Y_v - Y_u)) (X_{v \wedge t} - X_{u \wedge t}), \qquad t \in [0,T],
\end{equation}
exists as a uniform limit along the sequence $\pi = (\pi^n)_{n \in \N}$ of partitions. Notably, we shall allow, in particular, for integrands of the form $Y=f(X)$, with $f$ being a general twice continuously differentiable functions, and consider limits of approximating Riemann sums, where the integrand is given as a convex combination $\gamma Y_u + (1-\gamma) Y_v$ of the values of $Y_u$ and $Y_v$, with $[u,v]$ being a partition interval. We remark that $\gamma = 0$ corresponds to (forward) It{\^o}-type integration, $\gamma = \frac{1}{2}$ to Stratonovich-type integration and $\gamma = 1$ to backward It{\^o}-type integration; these being the most popular choices in probability theory.

To ensure the existence of the pathwise integral~\eqref{eq:intro} for non-gradient type integrands, the assumption of quadratic variation is generally not sufficient. It is therefore necessary to identify additional path properties of $X$ that restore the existence of the limit in \eqref{eq:intro}. To this end, we draw on rough path theory, initiated by Lyons~\cite{Lyons1998}, which provides a general framework for pathwise integration capable of handling paths of low regularity, such as the sample paths of semimartingales. In order to define a meaningful pathwise integral, rough path theory is founded on the insight that when a path lacks sufficient regularity, it must be enhanced to a so-called rough path containing information mimicking the iterated integrals of the path against itself, see e.g. \cite{Lyons2007,Friz2010,Friz2020}. Rather than relying on classical Riemann sums~\eqref{eq:intro}, the rough integral is constructed as the limit of compensated Riemann sums, which incorporate higher-order terms involving the iterated integrals. The rough path extension of a path is precisely what ensures a well-posed integration map leading, e.g., to stability estimates and well-posedness of differential equations driven by irregular signals.

We show that a sufficient condition for the existence of the pathwise integral~\eqref{eq:intro} is that the continuous path~$X$ admits both quadratic variation and L{\'e}vy area along the sequence $\pi = (\pi^n)_{n \in \N}$ of partitions, together with suitable regularity conditions. It turns out that these assumptions can be equivalently formulated as a path property, called Property $\gamma$-\textup{(RIE)}, which ensures that the path $X$ can be canonically enhanced to a rough path as a limit of Riemann sums. Assuming Property $\gamma$-\textup{(RIE)}, we prove that the associated rough integral exists as a limit of general Riemann sums~\eqref{eq:intro}, thus establishing a link between rough and F{\"o}llmer integration. Property $\gamma$-\textup{(RIE)} generalizes the original Property \textup{(RIE)} (this coincides with $\gamma$-\textup{(RIE)} for $\gamma=0$), as first introduced in~\cite{Perkowski2016} for continuous paths and extended to c{\`a}dl{\`a}g paths in~\cite{Allan2024}. The original Property \textup{(RIE)} allows to recover the rough integral as a limit of left-point Riemann sums  and found applications in model-free finance \cite{Allan2023a} and numerical analysis \cite{Allan2023c}. Notably, for $\gamma =1/2$ -- corresponding to the Stratonovich-type setting -- Property $\gamma$-\textup{(RIE)} is equivalent to the existence of L{\'e}vy area along a sequence of partitions and suitable regularity conditions, which implies that the pathwise integral exists, as expected, even without assuming the existence of quadratic variation.

Unlike rough integrals or, more classically, Riemann--Stieltjes and Young integrals, the F{\"o}llmer integral~\eqref{eq: Follmer integral} depends on the choice of the sequence of partitions since the underlying notion of quadratic variation is dependent on the chosen sequence of partitions. Indeed, as shown by Freedman \cite[p.~47]{Freedman1983}, for any continuous function, one can construct a sequence of partitions such that the resulting quadratic variation is zero. This result was extended by \cite{Davis2018}, where it is established that, given any sufficiently oscillating continuous path and any increasing function, one can construct a Lebesgue-type sequence of partitions such that the resulting quadratic variation coincides with the given increasing function. These observations motivated to study the invariance of quadratic variation with respect to the choice of partition sequences, as initiated by \cite{cont-das2022,contdas2023}. In particular, \cite{contdas2023} introduces the concept of quadratic roughness for H{\"o}lder continuous paths, which provides a necessary and sufficient condition under which the quadratic variation is invariant with respect to the choice of partition sequences.

We first generalize the concept of quadratic roughness, allowing for continuous paths of finite $p$-variation for $p\in [2,3)$. This broader class of paths has the advantage of containing the sample paths of continuous semimartingales, in contrast to the space of H{\"o}lder continuous functions. Secondly, we identify a necessary and sufficient condition -- referred to as L{\'e}vy roughness -- under which the L{\'e}vy area of a continuous path is invariant with respect to the choice of partition sequences.

Finally, we briefly demonstrate that the deterministic framework we developed for continuous paths can be applied to the sample paths of various stochastic processes, which in particular allows to recover the classical It{\^o} and Stratonovich integrals. This establishes a direct link between the pathwise integral~\eqref{eq:intro} {\`a} la F{\"o}llmer and classical stochastic integral.

\smallskip
\noindent \textbf{Organization of the paper:} In Section~\ref{sec: the rough integral as a limit of general Riemann sums} we develop the pathwise integration theory {\`a} la F{\"o}llmer, based on limits of general Riemann sums. In Section~\ref{sec: invariance} we establish necessary and sufficient conditions under which the quadratic variation and the L{\'e}vy area of a continuous path are invariant with respect to the choice of partition sequences. In Section~\ref{sec: application to stochastic integration} we explore the connections between the pathwise framework and stochastic processes as well as classical stochastic integration.

\smallskip
\noindent \textbf{Acknowledgement:} A. P. Kwossek was affiliated with the University of Mannheim for the majority of this project's duration.

\section{Pathwise `stochastic' integration}\label{sec: the rough integral as a limit of general Riemann sums}

In this section, we establish pathwise integration in the spirit of F{\"o}llmer, allowing to recover various types of classical stochastic integration, like It{\^o}, Stratonovich, and backward It{\^o} integration. To make the presentation more accessible, we begin by giving a brief exposition of the constructed pathwise integrals from the perspective of F{\"o}llmer integration. The more general construction of pathwise `stochastic' integration is then based on rough path theory.

\subsection{F{\"o}llmer-type integration: beyond gradients and left-point Riemann sums}

We aim at generalizing the notion of F{\"o}llmer integration in two directions, cf. \cite{Follmer1981}. First, we allow for integrands which are not necessarily the gradient of a function, as required in the classical construction of F\"ollmer. Second, we define the pathwise integral as a limit of general Riemann sums instead of only considering approximation by left-point Riemann sums.

To that end, we let $T > 0$ be a fixed finite time horizon. We let $C([0,T];\R^d)$ denote the space of continuous paths $X \colon [0,T] \to \R^d$, and write $C^k(\R^d;\R^{m \times d})$ for the space of $k$-times continuously differentiable functions $f\colon \R^d \to \R^{m \times d}$, for $n \in \N$. A \emph{partition} $\mathcal{P}$ of an interval $[s,t]$ is a finite set of points between and including the points $s$ and $t$, i.e., $\mathcal{P} = \{s = u_0 < u_1 < \cdots < u_N = t\}$ for some $N \in \N$, and its mesh size is denoted by $|\mathcal{P}|:= \max\{|u_{i+1} - u_i| \, : \, i = 0, \ldots, N-1\}$.

Moreover, for a path $X \in C([0,T];\R^d)$, we will often use the shorthand notation:
\begin{equation*}
	X_{s,t} := X_t - X_s, \qquad \text{for} \quad 0 \leq s \leq t \leq T.
\end{equation*}
We make the following assumption on the considered sequences of partitions.

\begin{assumption}\label{ass: partitions}
  Let $X \in C([0,T];\R^d)$ and $\pi = (\pi^n)_{n\in \N}$, with $\pi^n = \{0 = t^n_0 < t^n_1 < \dots < t^n_{N_n} = T\}$, $n \in \N$, be a sequence of partitions of the interval $[0,T]$ such that $\sup\{|X_{t^n_k,t^n_{k+1}}| \,: k=0, \dots, N_n-1\}$ converges to $0$ as $n \to \infty$.
\end{assumption}

Note that, for $X \in C([0,T];\R^d)$, any sequence $\pi = (\pi^n)_{n\in \N}$ of partitions with vanishing mesh size, i.e. $|\pi^n|\to 0$ as $n\to \infty$, satisfies $\sup\{|X_{t^n_k,t^n_{k+1}}| \, : k=0, \dots, N_n-1\}\to 0$ as $n \to \infty$.

For a given $\gamma \in [0,1]$ and $f \in C^3(\R^d;\R^{m \times d})$, we define the pathwise integral
\begin{equation}\label{eq: general Follmer integral}
  \int_0^t f(X_s) \dd^{\gamma,\pi} X_s := \lim_{n \to \infty}  \sum_{[u,v] \in \pi^n} (f(X_v) + \gamma (f(X_v) - f(X_u)) (X_{v \wedge t} - X_{u \wedge t}),
\end{equation}
for $t \in [0,T]$, where the limit is taken in the sense of uniform convergence. We note that $\gamma = 0$, $\gamma = \frac{1}{2}$ and $\gamma = 1$ correspond to (forward) It{\^o}-type integration, Stratonovich-type integration and backward It{\^o}-type integration respectively.

\medskip

To ensure the existence of the limit in \eqref{eq: general Follmer integral} for suitable functions~$f$, F{\"o}llmer introduced the concept of pathwise quadratic variation along a sequence of partitions, see \cite{Follmer1981}.

\begin{definition}[Quadratic variation of a path along a sequence of partitions]\label{def: quad variation}
  Let $X$ and $\pi = (\pi^n)_{n\in \N}$ be as in Assumption~\ref{ass: partitions}. We say that a path $X \in C([0,T];\R^d)$ possesses \emph{quadratic variation} along $\pi$ if
  \begin{equation*}
    [X]^\pi_t :=  \lim_{n \to \infty} [X]^{\pi^n}_t := \lim_{n \to \infty} \sum_{k=0}^{N_n-1} X_{t^n_k \wedge t, t^n_{k+1} \wedge t} \otimes X_{t^n_k \wedge t, t^n_{k+1} \wedge t}, \qquad t \in [0,T],
  \end{equation*}
  exists, where the convergence is uniform in $t \in [0,T]$. We denote by $Q^\pi([0,T];\R^d)$ the set of paths $X \in C([0,T];\R^d)$ possessing quadratic variation along $\pi$, where we implicitly assume that $X$ and $\pi$ satisfy Assumption~\ref{ass: partitions}.
\end{definition}

Let us remark that in the present setting, as shown in, e.g., \cite{Perkowski2016,Chiu2018}, the definition of quadratic variation provided in Definition~\ref{def: quad variation} is equivalent to the definition of quadratic variation introduced in the work of F\"ollmer~\cite{Follmer1981}. Assuming the existence of quadratic variation and that $f= \nabla g$ for some twice continuously differentiable function~$g$, the F{\"o}llmer integral~\eqref{eq: Follmer integral} exists and satisfies a pathwise It{\^o} formula, see~\cite[Th{\'e}or{\`e}me]{Follmer1981}.

\medskip

To generalize the pathwise integral~\eqref{eq: general Follmer integral} allowing for general functions~$f$, we introduce an additional path property. More precisely, we additionally impose the existence of the so-called L{\'e}vy area of a path along the given sequence of partitions and assume some regularity properties. To that end, we first recall the concept of control functions. Let $\Delta_T := \{(s,t) \in [0,T]^2 \, : \, s \leq t\}$ denote the standard $2$-simplex and let $C(\Delta_T;[0,\infty))$ be the space of all continuous functions $c \colon \Delta_T \to [0,\infty)$. A function $c\in C(\Delta_T;[0,\infty))$ is called a \emph{control function} if it is superadditive, in the sense that $c(s,u) + c(u,t) \leq c(s,t)$ for all $0 \leq s \leq u \leq t \leq T$. For instance, $c(s,t)=|t-s|$ is a typical example of a control function. For two vectors $x = (x^1, \ldots, x^d)$, $y = (y^1, \ldots, y^d) \in \R^d$ we use the standard tensor product
\begin{equation*}
  x \otimes y := (x^i y^j)_{i, j = 1, \ldots, d} \in \R^{d \times d}.
\end{equation*}

\begin{definition}[L{\'e}vy area of a path along a sequence of partitions]
  Let $X$ and $\pi = (\pi^n)_{n\in \N}$ be as in Assumption~\ref{ass: partitions}. We say that a path $X \in C([0,T];\R^d)$ possesses \emph{L{\'e}vy area} along $\pi$ if
  \begin{equation*}
    \cL^\pi_t(X) := \lim_{n \to \infty} \cL^{\pi^n}_t(X) := \lim_{n \to \infty} \sum_{k=0}^{N_n-1} (X_{t^n_k \wedge t} + X_{t^n_{k+1} \wedge t}) \otimes X_{t^n_k \wedge t, t^n_{k+1} \wedge t}, \qquad t \in [0,T],
  \end{equation*}
  exists, where the convergence is uniform in $t \in [0,T]$. We denote by $\cL^\pi([0,T];\R^d)$ the set of $X \in C([0,T];\R^d)$ possessing L{\'e}vy area along $\pi$, where we implicitly assume that $X$ and $\pi$ satisfy Assumption~\ref{ass: partitions}.
\end{definition}

Assuming the existence of quadratic variation and L{\'e}vy area along a given sequence of partitions, we establish the existence of the pathwise integral~\eqref{eq: general Follmer integral} for general functions in the next proposition under the following regularity conditions.

\begin{assumption}[Regularity conditions]\label{ass: regularity}
  Let $X$ and $\pi = (\pi^n)_{n\in \N}$ be as in Assumption~\ref{ass: partitions}. There exists a constant $p \in (2,3)$ and a control function $c$ such that
  \begin{equation*}
    \sup_{(s,t) \in \Delta_T} \frac{|X_{s,t}|^p}{c(s,t)} + \sup_{n \in \N} \sup_{0 \le k < \ell \le N_n} \frac{|\cL^{\pi^n}_{t^n_k,t^n_\ell}(X) - (X_{t^n_k} + X_{t^n_\ell}) \otimes X_{t^n_k,t^n_\ell}|^{\frac{p}{2}}}{c(t^n_k,t^n_\ell)} \lesssim 1,
  \end{equation*}
  where $\cL^{\pi^n}_{s,t}(X) := \cL^{\pi^n}_t(X) - \cL^{\pi^n}_s(X)$.
\end{assumption}

\begin{proposition}\label{prop:generalized Follner integration}
  Let $X$ and $\pi = (\pi^n)_{n\in \N}$ be as in Assumption~\ref{ass: partitions}. Suppose that $X$ possesses quadratic variation and L{\'e}vy area, both along $\pi$, and that Assumption~\ref{ass: regularity} is satisfied. Then, for every $\gamma \in [0,1]$ and for $f\in C^2(\R^d;\R^{m \times d})$, the pathwise integral
  \begin{equation*}
    \int_0^t f(X_s) \dd^{\gamma,\pi} X_s := \lim_{n \to \infty}  \sum_{k=0}^{N_n-1} (f(X_{t^n_k}) + \gamma (f(X_{t^n_{k+1}}) - f(X_{t^n_k})) (X_{t^n_{k+1} \wedge t} - X_{t^n_k \wedge t}),
  \end{equation*}
  for $t \in [0,T]$, exists, where the limit is uniform in $t \in [0,T]$. Moreover, for $F\in C^3(\R^d;\R^m)$, the pathwise F\"ollmer--It\^o formula
  \begin{equation*}
    F(X_t)-F(X_0)= \int_0^t \textup{D} F(X_s) \dd^{\gamma,\pi} X_s + \Big(\frac{1}{2}-\gamma\Big) \int_0^t \textup{D}^2 F(X_s) \dd [X]^\pi_s,\quad  t\in [0,T],
  \end{equation*}
  holds, where $\textup{D}$, $\textup{D}^2:$ $\R^{m} \mapsto \R^{m\times d}$ denote the differential operators of order $1$ and $2$, respectively.
\end{proposition}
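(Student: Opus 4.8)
The plan is to deduce the proposition from the rough-integration theory developed in the remainder of this section, by first lifting $X$ to a rough path built from its quadratic variation and L\'evy area and then matching the generalized Riemann sums with rough Riemann sums up to an explicit $[X]^\pi$-correction; throughout I write $[X]^\pi_{s,t}:=[X]^\pi_t-[X]^\pi_s$ and $\cL^\pi_{s,t}(X):=\cL^\pi_t(X)-\cL^\pi_s(X)$. From the elementary tensor identity $2\,x\otimes(y-x)=(x+y)\otimes(y-x)-(y-x)\otimes(y-x)$ one gets, for every $n$ and $t$,
\[
  2\sum_{k=0}^{N_n-1}X_{t^n_k\wedge t}\otimes X_{t^n_k\wedge t,\,t^n_{k+1}\wedge t}=\cL^{\pi^n}_t(X)-[X]^{\pi^n}_t,
\]
so the left-point Riemann sums approximating $\int X\otimes\dd X$ converge uniformly to $I^\pi_t:=\tfrac12\bigl(\cL^\pi_t(X)-[X]^\pi_t\bigr)$. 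Define $\X_{s,t}:=I^\pi_t-I^\pi_s-X_s\otimes X_{s,t}$ and, for $\gamma\in[0,1]$, $\X^{(\gamma)}_{s,t}:=\X_{s,t}+\gamma\,[X]^\pi_{s,t}$. A direct computation gives Chen's relation $\X^{(\gamma)}_{s,t}-\X^{(\gamma)}_{s,u}-\X^{(\gamma)}_{u,t}=X_{s,u}\otimes X_{u,t}$ (the additive $[X]^\pi$ term cancels), together with $\X_{s,t}+\X_{s,t}^\top=X_{s,t}\otimes X_{s,t}-[X]^\pi_{s,t}$ and $\X_{s,t}-\X_{s,t}^\top=\cL^\pi_{s,t}(X)-(X_s+X_t)\otimes X_{s,t}$, where the second identity uses the telescoping relation $\cL^{\pi^n}_t(X)+\cL^{\pi^n}_t(X)^\top=2\bigl(X_t\otimes X_t-X_0\otimes X_0\bigr)$. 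By Assumption~\ref{ass: regularity} one has $|X_{s,t}|\lesssim c(s,t)^{1/p}$; moreover $|\X^{(\gamma)}_{s,t}|\lesssim c(s,t)^{2/p}$, as one sees by splitting $\X^{(\gamma)}$ into symmetric and antisymmetric parts: the antisymmetric part is controlled directly by Assumption~\ref{ass: regularity} (first over mesh points of each $\pi^n$, then over all $(s,t)$ by uniform convergence of $\cL^{\pi^n}(X)$ and continuity), while the symmetric part is controlled by $|X_{s,t}|^2\lesssim c(s,t)^{2/p}$ together with $|[X]^\pi_{s,t}|\lesssim c(s,t)^{2/p}$ --- the latter since $[X]^\pi$ has positive-semidefinite increments, hence is dominated by the additive control function $\operatorname{tr}[X]^\pi$, so that $|[X]^\pi_{s,t}|\lesssim c(s,t)^{2/p}$ after replacing $c$ by $c+\operatorname{tr}[X]^\pi$ if necessary. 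Thus $\bX^{(\gamma)}:=(X,\X^{(\gamma)})$ is a $p$-rough path, and since $\X^{(\gamma)}$ is by construction a uniform limit of $\gamma$-weighted Riemann sums along $\pi$, this is exactly the assertion that $X$, enhanced by these data, satisfies Property $\gamma$-\textup{(RIE)}.

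For $f\in C^2(\R^d;\R^{m\times d})$ the pair $(f(X),\textup{D}f(X))$ is a controlled path, so the rough integral $\int_0^\cdot f(X_s)\,\dd\bX^{(\gamma)}_s$ exists; by the Riemann-sum characterisation of the rough integral available under Property $\gamma$-\textup{(RIE)}, it is the uniform limit of the $\gamma$-weighted Riemann sums $\sum_k\bigl(f(X_{t^n_k})+\gamma(f(X_{t^n_{k+1}})-f(X_{t^n_k}))\bigr)X_{t^n_k\wedge t,\,t^n_{k+1}\wedge t}$, which yields both the existence of $\int_0^t f(X_s)\,\dd^{\gamma,\pi}X_s$, uniformly in $t$, and the identity $\int_0^t f(X_s)\,\dd^{\gamma,\pi}X_s=\int_0^t f(X_s)\,\dd\bX^{(\gamma)}_s$. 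Since $\bX^{(\gamma)}$ differs from $\bX:=\bX^{(0)}$ only by the additive, finite $p/2$-variation second-level perturbation $(0,\gamma[X]^\pi)$, the standard comparison for rough integrals gives
\[
  \int_0^t f(X_s)\,\dd\bX^{(\gamma)}_s=\int_0^t f(X_s)\,\dd\bX_s+\gamma\int_0^t\textup{D}f(X_s)\,\dd[X]^\pi_s,
\]
the last term being a Young (equivalently Riemann--Stieltjes) integral against the continuous bounded-variation function $[X]^\pi$. Finally, for $F\in C^3(\R^d;\R^m)$ I apply the rough-path It\^o formula to $\bX$ with $f=\textup{D}F$: from $\X_{s,t}+\X_{s,t}^\top=X_{s,t}\otimes X_{s,t}-[X]^\pi_{s,t}$ the bracket of $\bX$ equals $[X]^\pi$, so
\[
  F(X_t)-F(X_0)=\int_0^t\textup{D}F(X_s)\,\dd\bX_s+\tfrac12\int_0^t\textup{D}^2F(X_s)\,\dd[X]^\pi_s,
\]
and substituting $\int_0^t\textup{D}F(X_s)\,\dd\bX_s=\int_0^t\textup{D}F(X_s)\,\dd^{\gamma,\pi}X_s-\gamma\int_0^t\textup{D}^2F(X_s)\,\dd[X]^\pi_s$ (the displayed identity with $f=\textup{D}F$, hence $\textup{D}f=\textup{D}^2F$) gives the claimed pathwise F\"ollmer--It\^o formula.

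\emph{Main obstacle.} The crux is the Riemann-sum characterisation of the rough integral invoked above: one must show the $\gamma$-weighted Riemann sums converge to $\int_0^\cdot f(X_s)\,\dd\bX^{(\gamma)}_s$ \emph{along the prescribed sequence $\pi$} --- note that Assumption~\ref{ass: partitions} only forces $\sup_k|X_{t^n_k,t^n_{k+1}}|\to 0$, not $|\pi^n|\to 0$ --- and uniformly in $t$. This is precisely what the Property $\gamma$-\textup{(RIE)} machinery supplies: one estimates, via the control function $c$, the sewing error of the map $t\mapsto\sum_k\bigl(f(X_{t^n_k})X_{t^n_k\wedge t,\,t^n_{k+1}\wedge t}+\textup{D}f(X_{t^n_k})\X^{(\gamma)}_{t^n_k\wedge t,\,t^n_{k+1}\wedge t}\bigr)$ along $\pi$, and passes from these compensated sums to the $\gamma$-weighted ones via the Taylor expansion $f(X_{t^n_{k+1}})-f(X_{t^n_k})=\textup{D}f(X_{t^n_k})X_{t^n_k,t^n_{k+1}}+O(|X_{t^n_k,t^n_{k+1}}|^2)$, whose remainder is bounded by $\sup_k|X_{t^n_k,t^n_{k+1}}|\sum_k|X_{t^n_k,t^n_{k+1}}|^2\to 0$ (using that $[X]^\pi$ exists). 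The remaining ingredients --- Chen's relation and the tensor bookkeeping, the comparison of rough integrals under additive second-level perturbations, the Föllmer-type convergence of Riemann--Stieltjes sums against $[X]^\pi$, and the rough-path It\^o formula --- are routine.
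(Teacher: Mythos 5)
Your argument is correct and follows the same essential route as the paper, which simply cites the machinery: the paper proves Property~$\gamma$-\textup{(RIE)} via Lemmas~\ref{lem: RIE eq to gamma RIE} and~\ref{lem: RIE eq to Levy and quad var}, notes $f(X)$ is controlled, invokes Theorem~\ref{thm: general pathwise stochastic integral} for existence, and cites Imkeller--Pr\"omel for the It\^o formula. You rebuild this chain inline: the rough path lift $\X^{(\gamma)}$ you construct coincides with the paper's $\X^\gamma$ (since $\int_0^\cdot X\otimes\d^{\gamma,\pi}X=\tfrac12(\cL^\pi(X)-[X]^\pi)+\gamma[X]^\pi$), your symmetric/antisymmetric split mirrors Remark~\ref{rem:rough path and levy area}, and your derivation of the F\"ollmer--It\^o formula via the rough-path It\^o formula and the second-level perturbation $\bX^{(\gamma)}=\bX+(0,\gamma[X]^\pi)$ with bracket $[X]^\pi$ is a standard re-derivation of the cited Imkeller--Pr\"omel corollary. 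The only slightly glossed point is that verifying Property~$\gamma$-\textup{(RIE)}\,(ii) requires the pre-limit $p/2$-variation control uniformly in $n$ (not just on the limit $\X^{(\gamma)}$), which hinges on the uniformly bounded $1$-variation of $([X]^{\pi^n})_n$ — a fact the paper handles via \cite[Lemma~4.17]{Perkowski2016} in Lemma~\ref{lem: RIE eq to Levy and quad var} — but your antisymmetric/symmetric bookkeeping is pointing exactly at this and is morally complete.
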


\begin{proof}
  Since $X$ possesses quadratic variation and L{\'e}vy area, both along $\pi$, and $X$ satisfies Assumption~\ref{ass: regularity}, $X$ satisfies Property $\gamma$-\textup(RIE), by Lemma~\ref{lem: RIE eq to gamma RIE} and~\ref{lem: RIE eq to Levy and quad var}. We refer to Subsection~\ref{subsec:rough integration} for the definition of Property $\gamma$-\textup(RIE). Moreover, note that $f(X)$ is a controlled path w.r.t. $X$, see e.g. \cite[Example~4.7]{Perkowski2016}. Hence, the existence of the pathwise integral is an immediate consequence of Theorem~\ref{thm: general pathwise stochastic integral}, and the pathwise It\^o formula follows by \cite[Corollary~3.13]{Imkeller2015}.
\end{proof}

Let us conclude this subsection with some remarks.

\begin{remark}
  In Theorem~\ref{thm: general pathwise stochastic integral} below, we not only show that the integral $\int_0^t f(X_s) \dd^{\gamma,\pi} X_s$ exists but also that it coincides with the rough integral with respect to a suitably defined rough path. Consequently, the powerful continuity and stability properties of rough integration transfer to the integral $\int_0^t f(X_s) \dd^{\gamma,\pi} X_s$. Moreover, let us remark that the existence of quadratic variation is not necessary for the claim of Proposition~\ref{prop:generalized Follner integration} to hold true in case of $\gamma=\frac{1}{2}$, and that Proposition~\ref{prop:generalized Follner integration} can, for instance, be generalized to non-anticipative, path-dependent functionals $f$ with suitable differentiablity properties in the sense of Dupire~\cite{Dupire2019}, see also \cite{Cont2010}.
\end{remark}

\begin{remark}\label{rem: Levy area}
  In probability theory, the L{\'e}vy area was originally introduced as the area that is enclosed by any trajectory of the Brownian motion $(B^1, B^2)$ and its chord, see~\cite{Levy1940}. It is defined by $\frac{1}{2} (\int_0^T B^1_t \dd B^2_t - \int_0^T B^2_t \dd B^1_t)$, which makes sense
  in terms of stochastic integration.

  In present pathwise framework, assuming the respective limits exist, indeed, we obtain that
  \begin{align*}
    &\cL^\pi_t(X)^{ij} - (X_t^i X_t^j - X_0^i X_0^j) \\
    &\quad = \lim_{n \to \infty} \sum_{k=0}^{N_n-1} (X^i_{t^n_k \wedge t} + X^i_{t^n_{k+1} \wedge t}) X^j_{t^n_k \wedge t, t^n_{k+1} \wedge t} - \sum_{k=0}^{N_n-1} \bigg(X^i_{t^n_{k+1} \wedge t} X^j_{t^n_{k+1} \wedge t} - X^i_{t^n_k \wedge t} X^j_{t^n_k \wedge t} \bigg) \\
    &\quad = \lim_{n \to \infty} \sum_{k=0}^{N_n-1} (X^i_{t^n_k} + \gamma X^i_{t^n_k, t^n_{k+1}}) X^j_{t^n_k \wedge t, t^n_{k+1} \wedge t} - \sum_{k=0}^{N_n-1} (X^j_{t^n_k} + \gamma X^j_{t^n_k, t^n_{k+1}}) X^i_{t^n_k \wedge t, t^n_{k+1} \wedge t} \\
    &\quad = \lim_{n \to \infty} \int_0^t X^i_s \dd^{\gamma,\pi^n} X^j_s - \int_0^t X^j_s \dd^{\gamma,\pi^n} X^i_s \\
    &\quad = \int_0^t X^i_s \dd^{\gamma,\pi} X^j_s - \int_0^t X^j \dd^{\gamma,\pi} X^i_s,
  \end{align*}
  for every $i, j = 1, \ldots, d$, which thus corresponds to the classical notion of the L{\'e}vy area. We further note that
  \begin{equation*}
    \int_0^t X^i_s \dd^{\gamma,\pi^n} X^j_s - \int_0^t X^j_s \dd^{\gamma,\pi^n} X^i_s = \int_0^t X^i_s \dd^{0,\pi^n} X^j_s - \int_0^t X^j_s \dd^{0,\pi^n} X^i_s,
  \end{equation*}
  that is, in terms of general Riemann sums, the pathwise L{\'e}vy area is invariant with respect to the choice of $\gamma$ and coincides with the It{\^o}-type one.
\end{remark}

\subsection{Essentials on rough path theory}

In this subsection we introduce some standard notation and recall the essentials from the theory of rough paths. For a more detailed exposition of rough path theory, we refer, e.g., to \cite{Lyons2007,Friz2010,Friz2020}.

\medskip

We shall write $a \lesssim b$ to mean that there exists a constant $C > 0$ such that $a \leq C b$. The constant $C$ may depend on the normed space, e.g.~through its dimension or regularity parameters. For two vector spaces, the space of linear maps from $E_1 \to E_2$ is denoted by $\cL(E_1;E_2)$.

\medskip

For a normed space $(E,|\cdot|)$, we let $C([0,T];E)$ denote the space of continuous paths from $[0,T]$ to $E$. For $p \geq 1$, the \emph{$p$-variation of a path} $X \in C([0,T];E)$ is given by
\begin{equation*}
  \|X\|_p := \|X\|_{p,[0,T]} \qquad \text{with} \qquad \|X\|_{p,[s,t]} := \bigg(\sup_{\mathcal{P}\subset[s,t]} \sum_{[u,v]\in \mathcal{P}} |X_v - X_u|^p \bigg)^{\frac{1}{p}}, \quad (s,t) \in \Delta_T,
\end{equation*}
where the supremum is taken over all possible partitions $\mathcal{P}$ of the interval $[s,t]$. We recall that, given a path $X\in C([0,T];E)$, we have that $\|X\|_p < \infty$ if and only if there exists a control function $c$ such that\footnote{Here and throughout, we adopt the convention that $\frac{0}{0} := 0$.}
\begin{equation*}
  \sup_{(u,v) \in \Delta_T} \frac{|X_v - X_u|^p}{c(u,v)} < \infty.
\end{equation*}
We write $C^{p\text{-var}} = C^{p\text{-var}}([0,T];E)$ for the space of paths $X \in C([0,T];E)$ which satisfy $\|X\|_p < \infty$. Moreover, for a path $X \in C([0,T];\R^d)$, we will often use the shorthand notation:
\begin{equation*}
  X_{s,t} := X_t - X_s, \qquad \text{for} \quad (s,t) \in \Delta_T.
\end{equation*}
For $r \geq 1$ and a two-parameter function $\X \colon \Delta_T \to E$, we similarly define
\begin{equation*}
  \|\X\|_r := \|\X\|_{r,[0,T]} \qquad \text{with} \qquad \|\X\|_{r,[s,t]} := \bigg(\sup_{\mathcal{P} \subset [s,t]} \sum_{[u,v] \in \mathcal{P}} |\X_{u,v}|^r\bigg)^{\frac{1}{r}}, \quad (s,t) \in \Delta_T.
\end{equation*}
We write $C_2^{r\text{-var}} = C_2^{r\text{-var}}(\Delta_T;E)$ for the space of continuous functions $\X \colon \Delta_T \to E$ which satisfy $\|\X\|_r < \infty$.

\medskip 

For $p \in [2,3)$, a pair $\bX = (X,\X)$ is called a \emph{(continuous) rough path} over $\R^d$ if
\begin{enumerate}
  \item[(i)] $X \in C^{p\text{-var}}([0,T];\R^d)$ and $\X \in C_2^{\frac{p}{2}\text{-var}}(\Delta_T;\R^{d \times d})$, and
  \item[(ii)] Chen's relation: $\X_{s,t} = \X_{s,u} + \X_{u,t} + X_{s,u} \otimes X_{u,t}$ holds for all $0 \leq s \leq u \leq t \leq T$.
\end{enumerate}
In component form, condition (ii) states that $\X^{ij}_{s,t} = \X^{ij}_{s,u} + \X^{ij}_{u,t} + X^i_{s,u} X^j_{u,t}$ for every $i$ and $j$. We will denote the space of $p$-rough paths by $\mathcal{C}^p = \mathcal{C}^p([0,T];\R^d)$. On the space $\mathcal{C}^p([0,T];\R^d)$, we use the natural seminorm
\begin{equation*}
  \|\bX\|_{p} := \|\bX\|_{p,[0,T]} \qquad \text{with} \qquad \|\bX\|_{p,[s,t]} := \|X\|_{p,[s,t]} + \|\X\|_{\frac{p}{2},[s,t]}
\end{equation*}
for $(s,t) \in \Delta_T$.

\medskip

Let $p \in (2,3)$ and $q > 0$ such that $\frac{2}{p} + \frac{1}{q} > 1$, and $X \in C^{p\text{-var}}([0,T];\R^d)$. We say that a pair $(Y,Y')$ is a \emph{controlled path} (with respect to $X$), if
\begin{equation*}
  Y \in C^{p\text{-var}}([0,T];\R^{d\times n}), \quad Y' \in C^{q\text{-var}}([0,T];\cL(\R^d;\R^{d\times n})), \quad \text{and} \quad R^Y \in C^{r\text{-var}}_2(\Delta_T;\R^{d\times n}),
\end{equation*}
where $R^Y$ is defined by
\begin{equation*}
  Y_{s,t} = Y'_s X_{s,t} + R^Y_{s,t} \qquad \text{for all} \quad (s,t) \in \Delta_T,
\end{equation*}
and $\frac{1}{r} = \frac{1}{p} + \frac{1}{q}$. We write $\mathscr{C}^{p,q}_X = \mathscr{C}^{p,q}_X([0,T];\R^{d\times n})$ for the space of $\R^{d\times n}$-valued controlled paths, which becomes a Banach space when equipped with the norm
\begin{equation*}
  (Y,Y') \mapsto |Y_0| + |Y'_0| + \|Y'\|_{q,[0,T]} + \|R^Y\|_{r,[0,T]}.
\end{equation*}

\medskip

Given $p \in (2,3)$, $\bX = (X,\X) \in \mathcal{C}^p([0,T];\R^d)$ and $(Y,Y') \in \mathscr{C}_X^{p,q}([0,T];\cL(\R^d;\R^{d\times n}))$, the (forward) rough integral
\begin{equation}\label{eq: rough integration}
  \int_s^t Y_r \dd \bX_r :=  \lim_{|\mathcal{P}| \to 0} \sum_{[u,v] \in \mathcal{P}} (Y_u X_{u,v} + Y'_u \X_{u,v}), \qquad (s,t) \in \Delta_T,
\end{equation}
exists (in the classical mesh Riemann--Stieltjes sense), where the limit is taken along any sequence $(\mathcal{P}^n)_{n\in \N}$ of partitions of the interval $[s,t]$ such that $|\mathcal{P}^n| \to 0$ as $n\to \infty$. More precisely, in writing the product $Y_u X_{u,v}$, we apply the operator $Y_u \in \cL(\R^d;\R^m)$ onto $X_{u,v} \in \R^d$; and in writing the product $Y'_u \X_{u,v}$, we use the natural identification of $\cL(\R^d;\cL(\R^d;\R^m))$ with $\cL(\R^d \otimes \R^d;\R^m)$. Moreover, the rough integral comes with the estimate
\begin{equation*}
  \bigg|\int_s^t Y_u \dd \bX_u - Y_s X_{s,t} - Y'_s \X_{s,t}\bigg| \leq C \Big( \|R^Y\|_{r,[s,t]} \|X\|_{p,[s,t]} + \|Y'\|_{q,[s,t]} \|\X\|_{\frac{p}{2},[s,t]} \Big)
\end{equation*}
for some constant $C$ depending only on $p$, $q$ and $r$; see e.g.~\cite[Theorem~4.9]{Perkowski2016}.

\subsection{Rough integration as limit of general Riemann sums}\label{subsec:rough integration}

In this subsection we aim to recover the rough integral~\eqref{eq: rough integration} in the spirit of F{\"o}llmer integral, i.e. as a limit of (general) Riemann sums along a sequence of partitions. To that end, we recall that $X \in C([0,T];\R^d)$ and $\pi = (\pi^n)_{n\in \N}$, with $\pi^n = \{0 = t^n_0 < t^n_1 < \dots < t^n_{N_n} = T\}$, $n \in \N$, is a sequence of partitions of the interval $[0,T]$ such that Assumption~\ref{ass: partitions} is satisfied, that is, $\sup\{|X_{t^n_k,t^n_{k+1}}| \,: k=0, \dots, N_n-1\} \to 0$ as $n \to \infty$. For a given $\gamma \in [0,1]$ and a (suitable) integrand $Y\in C([0,T];\R^{d\times n})$, we define a pathwise integral by
\begin{equation}\label{eq: general pathwise integral}
  \int_0^t Y_s \dd^{\gamma,\pi} X_s := \lim_{n \to \infty} \int_0^t Y_s  \dd^{\gamma,\pi^n} X_s, \qquad t \in [0,T],
\end{equation}
where the limit is taken in uniform convergence and
\begin{equation*}
  \int_0^t Y_s \dd^{\gamma,\pi^n} X_s := \sum_{[u,v] \in \pi^n} (Y_u + \gamma (Y_v - Y_u))  (X_{v \wedge t} - X_{u \wedge t}).
\end{equation*}
To ensure the existence of the limit in \eqref{eq: general pathwise integral}, we postulate the following property.

\begin{gammaRIE}
  Let $X$ and $\pi = (\pi^n)_{n\in \N}$ be as in Assumption~\ref{ass: partitions}, and let $\gamma \in [0,1]$. We assume that
  \begin{enumerate}
      \item[(i)] the Riemann sums
  \begin{equation*}
    \int_0^t X_s \otimes \d^{\gamma, \pi^n} X_s := \sum_{k=0}^{N_n-1} (X_{t^n_k} + \gamma X_{t^n_k,t^n_{k+1}}) \otimes X_{t^n_k \wedge t, t^n_{k+1} \wedge t}, \quad t \in [0,T],
  \end{equation*}
  converge uniformly as $n \to \infty$ to a limit, which we denote by $\int_0^t X_s \otimes \d^{\gamma,\pi} X_s$,
    \item[(ii)] there exists a constant $p \in (2,3)$ and a control function $c$ such that
  \begin{equation}\label{eq: bounded p/2 variation assumption}
    \sup_{(s,t) \in \Delta_T} \frac{|X_{s,t}|^p}{c(s,t)} + \sup_{n \in \N} \sup_{0 \le k < \ell \le N_n} \frac{| (\int_0^{\cdot} X_s \otimes \d^{\gamma,\pi^n} X_s)_{t^n_k,t^n_\ell} - (X_{t^n_k} + \gamma X_{t^n_k,t^n_{\ell}}) \otimes X_{t^n_k,t^n_\ell}|^{\frac{p}{2}}}{c(t^n_k,t^n_\ell)} \lesssim 1.
  \end{equation}
  \end{enumerate}
\end{gammaRIE}
We say that a path $X \in C([0,T];\R^d)$ satisfies \emph{Property~$\gamma$-\textup{(RIE)}} relative to $\gamma$, $p$ and $\pi$ if $\gamma$, $p$, $\pi$ and $X$ together satisfy Property $\gamma$-\textup{(RIE)}.

\begin{remark}
  Property $\gamma$-\textup{(RIE)} is closely related to Property \textup{(RIE)}, as introduced in~\cite{Perkowski2016} and~\cite{Allan2024}, which recovers the rough integral as a limit of (non-compensated) left-point Riemann sums, see~\cite[Theorem~4.19]{Perkowski2016}. Moreover, Property $\gamma$-\textup{(RIE)} is, roughly speaking, equivalent to the existence of quadratic variation and L{\'e}vy area together with the regularity conditions in Assumption~\ref{ass: regularity}. A precise study on the relation of these various assumptions is provided in Subsection~\ref{sec: on the assumption of general Riemann integrals} below.
\end{remark}

Under Property $\gamma$-\textup(RIE), we shall derive the existence of the pathwise integral~\eqref{eq: general pathwise integral} and show that it coincides with the rough integral, i.e. the rough integral can be approximated by non-compensated, general Riemann sums along suitable sequences of partitions.

\medskip

To properly define the rough integral assuming Property $\gamma$-\textup(RIE), we first fix the suitable rough path lift. Note that $\bX^0$ corresponds to the It{\^o}-rough path lift and $\bX^{\frac{1}{2}}$ corresponds to the Stratonovich-rough path lift of a stochastic process, since the ``iterated integral'' $\X^0$ and $\X^{\frac{1}{2}}$ is given as a limit of left-point and mid-point Riemann sums, respectively, analogously to the stochastic It{\^o} and Stratonovich integral.

\begin{proposition}\label{prop: rough path lift under gamma RIE}
  Let $X$ and $\pi = (\pi^n)_{n\in \N}$ be as in Assumption~\ref{ass: partitions}. Suppose that $X \in C([0,T];\R^d)$ satisfies Property $\gamma$-\textup{(RIE)} relative to some $\gamma \in [0,1]$, $p \in (2,3)$ and a sequence $\pi = (\pi^n)_{n \in \N}$ of partitions. Then, $X$ extends canonically to a continuous $p$-rough path $\bX^\gamma := (X, \X^\gamma)$, where
  \begin{equation}\label{eq: rough path lift via gamma RIE}
    \X^\gamma_{s,t} := \int_0^t X_r \otimes \d^{\gamma,\pi} X_r - \int_0^s X_r \otimes \d^{\gamma,\pi} X_r - X_s \otimes X_{s,t}, \qquad (s,t) \in \Delta_T.
  \end{equation}
\end{proposition}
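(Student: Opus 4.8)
The plan is to verify directly that the pair $\bX^\gamma = (X, \X^\gamma)$ satisfies the two defining conditions of a continuous $p$-rough path, namely $X \in C^{p\text{-var}}([0,T];\R^d)$ together with $\X^\gamma \in C_2^{p/2\text{-var}}(\Delta_T;\R^{d\times d})$, and Chen's relation. Set $I^n_t := \int_0^t X_r \otimes \d^{\gamma,\pi^n} X_r$ and $I_t := \int_0^t X_r \otimes \d^{\gamma,\pi} X_r$, so that part~(i) of Property $\gamma$-\textup{(RIE)} says precisely that $I^n \to I$ uniformly on $[0,T]$, and by~\eqref{eq: rough path lift via gamma RIE} we have $\X^\gamma_{s,t} = I_t - I_s - X_s \otimes X_{s,t}$. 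The membership $X \in C^{p\text{-var}}$ is immediate from the first summand of~\eqref{eq: bounded p/2 variation assumption}, and $X$ is continuous by hypothesis. Each $I^n$ is continuous in $t$ (a finite sum of continuous terms), hence the uniform limit $I$ is continuous, and therefore $\X^\gamma$ is a continuous $\R^{d \times d}$-valued function on $\Delta_T$. Chen's relation is then a one-line computation from~\eqref{eq: rough path lift via gamma RIE}: in $\X^\gamma_{s,t} - \X^\gamma_{s,u} - \X^\gamma_{u,t}$ the increments of $I$ telescope to zero, leaving $X_s \otimes X_{s,u} + X_u \otimes X_{u,t} - X_s \otimes X_{s,t} = (X_u - X_s) \otimes X_{u,t} = X_{s,u} \otimes X_{u,t}$, as required.

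The substantive part is to show $\X^\gamma \in C_2^{p/2\text{-var}}$. The starting point is the elementary observation that, at any pair of partition points $t^n_k \le t^n_\ell$, the increment $\X^\gamma_{t^n_k,t^n_\ell}$ can be written as the sum of three terms: the difference $(I_{t^n_\ell} - I_{t^n_k}) - (I^n_{t^n_\ell} - I^n_{t^n_k})$; the quantity whose $p/2$-th power appears in the numerator of the second summand of~\eqref{eq: bounded p/2 variation assumption}; and $\gamma\, X_{t^n_k,t^n_\ell} \otimes X_{t^n_k,t^n_\ell}$. By part~(ii) of Property $\gamma$-\textup{(RIE)} the second term has $p/2$-th power $\lesssim c(t^n_k,t^n_\ell)$, and the third satisfies $|\gamma X_{t^n_k,t^n_\ell} \otimes X_{t^n_k,t^n_\ell}|^{p/2} \le |X_{t^n_k,t^n_\ell}|^p \lesssim c(t^n_k,t^n_\ell)$; hence, by superadditivity of $c$, the $p/2$-variation of $(t^n_k,t^n_\ell) \mapsto \X^\gamma_{t^n_k,t^n_\ell} - \big((I_{t^n_\ell} - I_{t^n_k}) - (I^n_{t^n_\ell} - I^n_{t^n_k})\big)$ over partitions whose points all lie in $\pi^n$ is bounded by a constant multiple of $c(0,T)$, uniformly in $n$. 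Since $\|I - I^n\|_\infty \to 0$, the subtracted difference is a uniformly small perturbation; letting $n \to \infty$, approximating an arbitrary partition of $[0,T]$ by subpartitions of $\pi^n$, and using the continuity of $X$ and $\X^\gamma$, one upgrades this to $\|\X^\gamma\|_{p/2,[0,T]}^{p/2} \lesssim c(0,T) < \infty$. Together with the first paragraph, this shows that $\bX^\gamma = (X, \X^\gamma)$ is a continuous $p$-rough path.

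The step I expect to be the main obstacle is this last passage to the limit: Property $\gamma$-\textup{(RIE)} supplies the $p/2$-variation control of the Riemann sums only at the points of the partitions $\pi^n$, and because $\pi = (\pi^n)_{n \in \N}$ is assumed only to satisfy $\sup_k |X_{t^n_k,t^n_{k+1}}| \to 0$ rather than $|\pi^n| \to 0$, one cannot simply replace the points of a given partition by close-by partition points in the time variable. The approximation therefore has to be carried out along the path $X$ and the two-parameter function $I$ rather than in time, exploiting that the partition points bracketing any $s$ and $t$ have vanishing $X$-increment as $n \to \infty$, so that $|X_s - X_{t^n_k}|$, $|X_{s,t} - X_{t^n_k,t^n_\ell}|$ and the corresponding differences of $I$ are controlled; ensuring that the resulting estimate is genuinely against the control $c$, and not merely a uniform-in-$(s,t)$ bound, is the delicate point.
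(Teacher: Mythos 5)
Your decomposition of $\X^\gamma_{t^n_k,t^n_\ell}$ into the three pieces $\bigl((I-I^n)_{t^n_\ell}-(I-I^n)_{t^n_k}\bigr)$, the quantity controlled by~\eqref{eq: bounded p/2 variation assumption}, and $\gamma\,X_{t^n_k,t^n_\ell}\otimes X_{t^n_k,t^n_\ell}$ is correct and is, at partition points, essentially the same algebra the paper uses. Where you diverge is in how the estimate at partition points is promoted to a bound on $\|\X^\gamma\|_{p/2}$ over arbitrary partitions. The paper does not work with the limit object $\X^\gamma$ at all in this step: it introduces the approximants $\X^{\gamma,\pi^n}_{s,t} := \int_0^t X_r\otimes\d^{\gamma,\pi^n}X_r - \int_0^s X_r\otimes\d^{\gamma,\pi^n}X_r - X^n_s\otimes X_{s,t}$, with $X^n$ the piecewise \emph{constant} interpolation of $X$ along $\pi^n$, which are defined for \emph{all} $(s,t)\in\Delta_T$ and satisfy Chen's relation. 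For a general $(s,t)$ they reduce to the partition-point case plus two explicit boundary pieces (handled by Chen's relation and the bound $|X_{s,t}|^p\lesssim c(s,t)$), giving $\sup_n\|\X^{\gamma,\pi^n}\|_{p/2}\lesssim c(0,T)$ directly; lower semi-continuity of the $\frac{p}{2}$-variation under pointwise convergence then yields $\|\X^\gamma\|_{p/2}<\infty$. This avoids the approximation problem entirely.

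The step you flag as the main obstacle is in fact a genuine gap in your plan, not merely a delicate point. To ``approximate an arbitrary partition of $[0,T]$ by subpartitions of $\pi^n$'' you need, for each endpoint $u_i$, a nearby $\pi^n$-point $t^n_{k_i}$ with $|\X^\gamma_{u_i,u_{i+1}}-\X^\gamma_{t^n_{k_i},t^n_{k_{i+1}}}|\to 0$ as $n\to\infty$; via Chen's relation this requires the corrections $\X^\gamma_{u_i,t^n_{k_i}}$ and $X_{u_i,t^n_{k_i}}\otimes(\cdots)$ to vanish. But Assumption~\ref{ass: partitions} only gives $\sup_k|X_{t^n_k,t^n_{k+1}}|\to 0$, not $|\pi^n|\to 0$ nor control of the oscillation of $X$ (let alone of $I$ or $\X^\gamma$) between consecutive $\pi^n$-points, so there is no reason $X_{u_i,t^n_{k_i}}$ or $I_{u_i}-I_{t^n_{k_i}}$ should become small. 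Your plan would therefore need an extra ingredient (for instance, restricting to partitions with $|\pi^n|\to 0$, or a modulus-of-continuity estimate for $\X^\gamma$ that you do not have at this stage). The lower semi-continuity route sidesteps this: it only ever evaluates the approximants $\X^{\gamma,\pi^n}$, for which the behaviour between consecutive $\pi^n$-points is computable in closed form because $X^n$ is constant there.
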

Note that the rough path lift $ \X^\gamma$ depends on the choice of the partition sequences $\pi$.
\begin{proof}[Proof of Proposition \ref{prop: rough path lift under gamma RIE}]
  It is straightforward to check that $(X,\X^\gamma)$ satisfies Chen's relation and that $\|X\|_p < \infty$. Therefore, it remains to show that $\|\X^\gamma\|_{\frac{p}{2}} < \infty$. We define $X^n \colon [0,T] \to \R^d$ by
  \begin{equation*}
    X^n_t = X_t \1_{\{T\}}(t) + \sum_{k=0}^{N_n-1} X_{t^n_k} \1_{[t^n_k,t^n_{k+1})}(t), \qquad t \in [0,T].
  \end{equation*}
  By Property $\gamma$-\textup{(RIE)}, we know that
  \begin{equation*}
    \lim_{n \to \infty} \X^{\gamma,\pi^n}_{s,t} = \X^\gamma_{s,t},
  \end{equation*}
  for $\X^{\gamma,\pi^n}_{s,t} := \int_0^t X_r \otimes \d^{\gamma,\pi^n} X_r - \int_0^s X_r \otimes \d^{\gamma,\pi^n} X_r - X^n_s \otimes X_{s,t}$, for $(s,t) \in \Delta_T$, where the convergence is uniform in $(s,t)$. We aim to show that $\sup_{n \in \N} \|\X^{\gamma,\pi^n}\|_{\frac{p}{2}} < \infty$, which then implies by the lower semi-continuity of the $\frac{p}{2}$-variation that
  \begin{equation*}
    \|\X^\gamma\|_{\frac{p}{2}} \leq \liminf_{n \to \infty} \|\X^{\gamma,\pi^n}\|_{\frac{p}{2}} < \infty.
  \end{equation*}
  Let $(s,t) \in \Delta_T$. If there exists $k$ such that $t^n_k \le s < t \le t^n_{k+1}$, then we estimate
  \begin{equation}\label{eq: gamma RIE pr1}
    |\X^{\gamma,\pi^n}_{s,t}|^{\frac{p}{2}} = |(X_{t^n_k} + \gamma X_{t^n_k,t^n_{k+1}}) \otimes X_{s,t} - X_{t^n_k} \otimes X_{s,t}|^{\frac{p}{2}} \lesssim |X_{s,t}|^p + |X_{t^n_k,t^n_{k+1}}|^p \lesssim c(t^n_k,t^n_{k+1}).
  \end{equation}
  Otherwise, let $k_0$ be the smallest $k$ such that $t^n_k \in (s,t)$, and let $k_1$ be the largest such $k$. We decompose
  \begin{equation*}
    \X^{\gamma,\pi^n}_{s,t} = \X^{\gamma,\pi^n}_{s,t^n_{k_0}} + \X^{\gamma,\pi^n}_{t^n_{k_0},t^n_{k_1}} + \X^{\gamma,\pi^n}_{t^n_{k_1},t} + X^n_{s,t^n_{k_0}} \otimes X_{t^n_{k_0}, t^n_{k_1}} + X^n_{s,t^n_{k_1}} \otimes X_{t^n_{k_1}, t}.
  \end{equation*}
  By~\eqref{eq: bounded p/2 variation assumption}, we have $|\X^{\gamma,\pi^n}_{t^n_{k_0},t^n_{k_1}}|^{\frac{p}{2}} \lesssim c(t^n_{k_0},t^n_{k_1})$, and we estimate
  \begin{align*}
    &|X^n_{s,t^n_{k_0}} \otimes X_{t^n_{k_0},t^n_{k_1}}|^{\frac{p}{2}} + |X^n_{s,t^n_{k_1}} \otimes X_{t^n_{k_1},t}|^{\frac{p}{2}} \\
    &\quad \lesssim |X^n_{s,t^n_{k_0}}|^p + |X_{t^n_{k_0},t^n_{k_1}}|^p + |X^n_{s,t^n_{k_1}}|^p + |X_{t^n_{k_1},t}|^p \\
    &\quad = |X_{t^n_{k_0-1},t^n_{k_0}}|^p + |X_{t^n_{k_0},t^n_{k_1}}|^p + |X_{t^n_{k_0-1},t^n_{k_1}}|^p + |X_{t^n_{k_1},t}|^p \\
    &\quad \lesssim 2 c(t^n_{k_0-1},t).
  \end{align*}
  Combining this with~\eqref{eq: gamma RIE pr1}, we deduce that $\|\X^{\gamma,\pi^n}\|_{\frac{p}{2}} \lesssim c(0,T)$, and the proof is complete.
\end{proof}

We now proceed similarly to~\cite{Perkowski2016}. The following lemma links Property $\gamma$-\textup{(RIE)} to the existence of quadratic variation, which we rely on when calculating the rough integral.

\begin{lemma}\label{lemma: quadratic variation under gamma RIE}
  Let $X$ and $\pi = (\pi^n)_{n\in \N}$ as in Assumption~\ref{ass: partitions}. Suppose that $X \in C([0,T];\R^d)$ satisfies Property $\gamma$-\textup{(RIE)} relative to some $\gamma \in [0,1]$, $p \in (2,3)$ and a sequence $\pi = (\pi^n)_{n \in \N}$ of partitions. Let $1 \leq i, j \leq d$, and define for $\gamma = \frac{1}{2}$, $[X^i,X^j]^{\gamma,\pi} := 0$, and for $\gamma \neq \frac{1}{2}$,
  \begin{equation*}
    [X^i,X^j]^{\gamma,\pi}_t := X^i_t X^j_t - X^i_0 X^j_0 - \int_0^t X^i_s \dd^{\gamma,\pi} X^j_s - \int_0^t X^j_s \dd^{\gamma,\pi} X^i_s, \qquad t \in [0,T].
  \end{equation*}
  Then, $[X^i,X^j]^{\gamma,\pi}$ is a continuous function and
  \begin{equation}\label{eq: quadratic variation as limit under gamma RIE}
    [X^i,X^j]^{\gamma,\pi}_t = \lim_{n \to \infty} [X^i,X^j]^{\gamma,\pi^n}_t := \lim_{n \to \infty} (1-2\gamma) \sum_{k=0}^{N_n-1} X^i_{t^n_k \wedge t, t^n_{k+1} \wedge t} X^j_{t^n_k \wedge t, t^n_{k+1} \wedge t},\quad t \in [0,T].
  \end{equation}
  The sequence $([X^i,X^j]^{\gamma,\pi^n})_{n \in \N}$ has uniformly bounded $1$-variation and, thus, $[X^i,X^j]^{\gamma,\pi}$ has finite $1$-variation. We write $[X]^{\gamma,\pi} = [X,X]^{\gamma,\pi} = ([X^i,X^j]^{\gamma,\pi})_{1 \leq i,j \leq d}$, and, analogously, $[X]^{\gamma,\pi^n}$, $n \in \N$.
\end{lemma}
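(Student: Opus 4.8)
The plan is to derive everything from one exact algebraic identity for the finite Riemann sums, and then to pass to the limit using Property~$\gamma$-\textup{(RIE)}. Fix $1\le i,j\le d$. \textbf{Step 1 (an algebraic identity).} For fixed $n$ and $t\in[0,T]$, telescope $X^i_tX^j_t-X^i_0X^j_0$ along the points of $\pi^n$ truncated at $t$, using $X^i_vX^j_v-X^i_uX^j_u=X^i_uX^j_{u,v}+X^j_uX^i_{u,v}+X^i_{u,v}X^j_{u,v}$ on each interval, and compare with the definitions of $\int_0^tX^i_s\dd^{\gamma,\pi^n}X^j_s$ and $\int_0^tX^j_s\dd^{\gamma,\pi^n}X^i_s$; this yields
\begin{equation*}
  (1-2\gamma)\sum_{k=0}^{N_n-1}X^i_{t^n_k\wedge t,t^n_{k+1}\wedge t}X^j_{t^n_k\wedge t,t^n_{k+1}\wedge t}
  = X^i_tX^j_t-X^i_0X^j_0-\int_0^tX^i_s\dd^{\gamma,\pi^n}X^j_s-\int_0^tX^j_s\dd^{\gamma,\pi^n}X^i_s+E^n_t,
\end{equation*}
where the remainder $E^n_t$ is supported on the single interval of $\pi^n$ containing $t$ (so $E^n_t=0$ for $t\in\pi^n$) and satisfies $\sup_{t\in[0,T]}|E^n_t|\le 2\gamma\big(\sup_k|X_{t^n_k,t^n_{k+1}}|\big)^2\to 0$ by Assumption~\ref{ass: partitions}. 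For fixed $n$, all four $t$-dependent terms on the right-hand side are continuous in $t$.

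\textbf{Step 2 (passing to the limit; proof of \eqref{eq: quadratic variation as limit under gamma RIE} and continuity).} The $(i,j)$-entry of the matrix Riemann sum in condition (i) of Property~$\gamma$-\textup{(RIE)} is precisely $\int_0^tX^i_s\dd^{\gamma,\pi^n}X^j_s$; hence condition (i) of Property~$\gamma$-\textup{(RIE)} together with \eqref{eq: general pathwise integral} gives $\int_0^tX^i_s\dd^{\gamma,\pi^n}X^j_s\to\int_0^tX^i_s\dd^{\gamma,\pi}X^j_s:=\big(\int_0^tX_s\otimes\dd^{\gamma,\pi}X_s\big)^{ij}$ uniformly in $t$, and likewise with $i$ and $j$ interchanged. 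Combined with $\sup_t|E^n_t|\to 0$ and the continuity of $t\mapsto X^i_tX^j_t$, the right-hand side of the identity in Step 1 converges uniformly; its limit equals $[X^i,X^j]^{\gamma,\pi}_t$ when $\gamma\ne\tfrac12$ (by the very definition of $[X^i,X^j]^{\gamma,\pi}$) and equals $0$ when $\gamma=\tfrac12$. This is \eqref{eq: quadratic variation as limit under gamma RIE}. In particular $[X^i,X^j]^{\gamma,\pi}$ is a uniform limit of continuous functions, hence continuous (for $\gamma=\tfrac12$ it is identically $0$).

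\textbf{Step 3 (uniformly bounded $1$-variation).} For $\gamma=\tfrac12$ there is nothing to prove. Assume $\gamma\ne\tfrac12$. Taking $i=j$ and $t=T$ in \eqref{eq: quadratic variation as limit under gamma RIE} shows that $\sum_k(X^i_{t^n_k,t^n_{k+1}})^2=(1-2\gamma)^{-1}[X^i]^{\gamma,\pi^n}_T$ converges as $n\to\infty$; being non-negative, these sums are uniformly bounded, say by $M_i\ge 0$. By the Cauchy--Schwarz inequality,
\begin{equation*}
  \sup_{n\in\N}\sum_{k=0}^{N_n-1}\big|X^i_{t^n_k,t^n_{k+1}}X^j_{t^n_k,t^n_{k+1}}\big|
  \le\sup_{n\in\N}\Big(\sum_k(X^i_{t^n_k,t^n_{k+1}})^2\Big)^{1/2}\Big(\sum_k(X^j_{t^n_k,t^n_{k+1}})^2\Big)^{1/2}\le\sqrt{M_iM_j}<\infty.
\end{equation*}
For any partition whose points lie in $\pi^n$, each increment of $[X^i,X^j]^{\gamma,\pi^n}$ is a sum of the terms $(1-2\gamma)X^i_{t^n_k,t^n_{k+1}}X^j_{t^n_k,t^n_{k+1}}$ over a disjoint family of intervals of $\pi^n$, so the displayed bound controls the $1$-variation of $[X^i,X^j]^{\gamma,\pi^n}$ along $\pi^n$, uniformly in $n$. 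Finally, for an arbitrary partition $\{0=s_0<\dots<s_M=T\}$ of $[0,T]$ and $\varepsilon>0$, the uniform continuity of the continuous function $[X^i,X^j]^{\gamma,\pi}$ together with the uniform convergence in \eqref{eq: quadratic variation as limit under gamma RIE} lets one replace each $s_\ell$ by a nearby point of $\pi^n$ (for $n$ large) to get $\sum_\ell|[X^i,X^j]^{\gamma,\pi}_{s_{\ell+1}}-[X^i,X^j]^{\gamma,\pi}_{s_\ell}|\le|1-2\gamma|\sqrt{M_iM_j}+\varepsilon$; letting $\varepsilon\downarrow 0$ and taking the supremum over partitions yields $\|[X^i,X^j]^{\gamma,\pi}\|_{1,[0,T]}\le|1-2\gamma|\sqrt{M_iM_j}<\infty$.

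\textbf{Main obstacle.} Steps 1 and 2 are essentially bookkeeping (a telescoping identity and the identification of the $(i,j)$-component of the matrix Riemann sum), so the real content is the $1$-variation bound in Step 3. The delicate point is that the pre-limit functions $[X^i,X^j]^{\gamma,\pi^n}$ contain, on each interval of $\pi^n$, the interpolating piece $t\mapsto X^i_{t^n_k,t}X^j_{t^n_k,t}$, whose genuine $1$-variation need not be finite for a path of finite $p$-variation with $p\in(2,3)$; one must therefore never estimate these pieces directly, but only work with the values of $[X^i,X^j]^{\gamma,\pi}$ on the grids $\pi^n$, combined with continuity and uniform convergence of the limit. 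The off-diagonal case $i\ne j$ cannot be reduced to a monotone function, which is exactly why the Cauchy--Schwarz reduction to the diagonal sums $\sum_k(X^i_{t^n_k,t^n_{k+1}})^2$ — shown to converge (hence be bounded) in Step 2 — is the key step.
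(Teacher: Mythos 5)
Your proof is correct and follows the same skeleton as the paper's (telescoping the product $X^i_tX^j_t$, isolating the $(1-2\gamma)$-term, and passing to the uniform limit via Property $\gamma$-(RIE)), but with two genuine differences worth noting.

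First, you make explicit the error term $E^n_t$ coming from the mismatch between $X_{t^n_k}+\gamma X_{t^n_k,t^n_{k+1}}$ (used in the Riemann sums of Property $\gamma$-(RIE)) and the truncated $X_{t^n_k\wedge t}+\gamma X_{t^n_k\wedge t,t^n_{k+1}\wedge t}$ (which appears naturally in the telescoping). The paper silently absorbs this discrepancy; your bound $\sup_t|E^n_t|\lesssim\gamma(\sup_k|X_{t^n_k,t^n_{k+1}}|)^2\to 0$ makes the argument cleaner.

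Second, for the $1$-variation bound you use Cauchy--Schwarz on the diagonal sums, whereas the paper polarizes, writing $[X^i,X^j]^{\gamma,\pi^n}=\frac14([X^i+X^j]^{\gamma,\pi^n}-[X^i-X^j]^{\gamma,\pi^n})$ and observing that the two pieces are nondecreasing at grid points. Both routes work; your ``Main obstacle'' remark that the off-diagonal case ``cannot be reduced to a monotone function'' is not quite right, since the polarization identity does exactly that reduction. Cauchy--Schwarz is thus a convenient alternative, not an obligatory one.

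You do, however, put your finger on a genuine imprecision in the paper: the lemma asserts that the \emph{pre-limit} functions $[X^i,X^j]^{\gamma,\pi^n}$ have uniformly bounded $1$-variation, but since each $[X^i,X^j]^{\gamma,\pi^n}_t$ contains on $[t^n_k,t^n_{k+1}]$ the interpolating piece $t\mapsto(1-2\gamma)X^i_{t^n_k,t}X^j_{t^n_k,t}$, its genuine $1$-variation is infinite for a typical path of finite $p$-variation with $p>2$ (e.g.\ Brownian paths). The paper's polarization argument only controls the variation along the grid $\pi^n$, and the correct conclusion is precisely the one you prove: the uniform bound on grid-point increments, together with uniform convergence and continuity of the limit, yields $\|[X^i,X^j]^{\gamma,\pi}\|_{1,[0,T]}<\infty$. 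Your careful phrasing (``work with the values of $[X^i,X^j]^{\gamma,\pi}$ on the grids $\pi^n$'') is the right way to make the paper's argument rigorous; you could sharpen your write-up by stating explicitly that the assertion of uniformly bounded $1$-variation of the pre-limit sequence should be read as a bound along partitions subordinate to $\pi^n$, which is all that is used downstream (e.g.\ in the proof of Lemma~\ref{lemma: gamma RIE}).
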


\begin{proof}
  By definition, the function $t\mapsto [X^i,X^j]_{ t}^{\gamma,\pi}$ is continuous. We observe that
  \begin{equation*}
    X^i_t X^j_t - X^i_0 X^j_0 = \sum_{k=0}^{N_n-1} (X^i_{t^n_{k+1} \wedge t} X^j_{t^n_{k+1} \wedge t} - X^i_{t^n_k \wedge t} X^j_{t^n_k \wedge t})
  \end{equation*}
  for every $n \in \N$, and
  \begin{align*}
    &X^i_{t^n_{k+1} \wedge t} X^j_{t^n_{k+1} \wedge t} - X^i_{t^n_k \wedge t} X^j_{t^n_k \wedge t} \\
    &\quad = (X^i_{t^n_k \wedge t} + \gamma X^i_{t^n_k \wedge t, t^n_{k+1} \wedge t}) X^j_{t^n_k \wedge t, t^n_{k+1} \wedge t} + (X^j_{t^n_k \wedge t} + \gamma X^j_{t^n_k \wedge t, t^n_{k+1} \wedge t})  X^i_{t^n_k \wedge t, t^n_{k+1} \wedge t} \\
    &\qquad + (1 - 2 \gamma) X^i_{t^n_k \wedge t, t^n_{k+1} \wedge t} X^j_{t^n_k \wedge t, t^n_{k+1} \wedge t}.
  \end{align*}
  Since $(\int_0^\cdot X_s \otimes \d^{\gamma,\pi^n} X_s)_{n\in \N}$ converges uniformly to $\int_0^\cdot X_s \otimes \d^{\gamma,\pi} X_s$, the convergence in~\eqref{eq: quadratic variation as limit under gamma RIE}  then holds. We further see that
  \begin{equation*}
    X^i_{t^n_k \wedge t, t^n_{k+1} \wedge t} X^j_{t^n_k \wedge t, t^n_{k+1} \wedge t} = \frac{1}{4} (((X^i + X^j)_{t^n_k \wedge t, t^n_{k+1} \wedge t})^2 - ((X^i - X^j)_{t^n_k \wedge t, t^n_{k+1} \wedge t})^2)
  \end{equation*}
  (i.e.~$[X^i,X^j]^{\gamma,\pi} = \frac{1}{4} ([X^i + X^j]^{\gamma,\pi} - [X^i - X^j]^{\gamma,\pi})$). That is, $[X^i,X^j]^{\gamma,\pi^n}$ is given as the difference of two increasing functions, and its $1$-variation is bounded from above by
  \begin{align*}
    &(1-2\gamma) \sum_{k=0}^{N_n-1} (((X^i + X^j)_{t^n_k,t^n_{k+1}})^2 + ((X^i - X^j)_{t^n_k, t^n_{k+1}})^2) \\
    &\quad \lesssim (1-2\gamma) \sup_{m \in \N} \sum_{k=0}^{N_m-1} ((X^i_{t^m_k, t^m_{k+1}})^2 + (X^j_{t^m_k, t^m_{k+1}})^2).
  \end{align*}
  Since the right-hand side is finite, we obtain that the limit $[X^i,X^j]^{\gamma,\pi}$ has finite $1$-variation.
\end{proof}

With the quadratic variation at hand, we apply a piecewise linear interpolation to continuously approximate the path and obtain a Stratonovich-type integral, that we then translate back into a general pathwise integral.

\begin{lemma}\label{lemma: gamma RIE}
  Let $X$ and $\pi = (\pi^n)_{n\in \N}$ be as in Assumption~\ref{ass: partitions}. Suppose that $X$ satisfies Property $\gamma$-\textup{(RIE)} relative to some $\gamma \in [0,1]$, $p \in (2,3)$ and a sequence of partitions $\pi = (\pi^n)_{n \in \N}$. Define $(\bar{X}^n)_{n\in \N}$ as the piecewise linear interpolation of $X$ along $\pi = (\pi^n)_{n \in \N}$. Then the Riemann--Stieltjes integral
  \begin{equation}\label{eq: Stratonovich vs gamma RIE}
    \begin{split}
    \lim_{n \to \infty} \int_s^t \bar{X}^n_r \otimes \d \bar{X}^n_r 
    &:= \lim_{n \to \infty} \sum_{k=0}^{N_n-1} (X_{t^n_k} + \frac{1}{2} X_{t^n_k,t^n_{k+1}}) \otimes X_{t^n_k \wedge t, t^n_{k+1} \wedge t} \\
    &= \int_s^t X_r \otimes \d^{\gamma,\pi} X_r + \frac{1}{2} [X]^{\gamma,\pi}_{s,t},
    \end{split}
  \end{equation}
  where the convergence is uniform in $(s,t) \in \Delta_T$. Moreover, the sequence $(\bar{\X}^n)_{n \in \N}$ has uniformly bounded $\frac{p}{2}$-variation, where $\bar{\X}^n_{s,t} := \int_s^t \bar{X}^n_{s,r} \otimes \d \bar{X}^n_r$ for $(s,t) \in \Delta_T$.
\end{lemma}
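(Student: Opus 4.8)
The plan is to prove the three assertions of the lemma in turn: (i) the Riemann--Stieltjes integral of the piecewise-linear path equals the mid-point Riemann sum displayed on the first line of \eqref{eq: Stratonovich vs gamma RIE}; (ii) this sum converges uniformly to $\int_s^t X_r\otimes\dd^{\gamma,\pi}X_r+\tfrac{1}{2}[X]^{\gamma,\pi}_{s,t}$; and (iii) $(\bar{\X}^n)_{n\in\N}$ has uniformly bounded $\tfrac{p}{2}$-variation. Assertions (i) and (ii) are short computations. For (i): on each interval $[t^n_k,t^n_{k+1}]$ the path $\bar X^n$ is the straight line from $X_{t^n_k}$ to $X_{t^n_{k+1}}$, and substituting its affine parametrization gives $\int_{t^n_k}^{t^n_{k+1}}\bar X^n_r\otimes\dd\bar X^n_r=(X_{t^n_k}+\tfrac{1}{2}X_{t^n_k,t^n_{k+1}})\otimes X_{t^n_k,t^n_{k+1}}$; summing over the partition intervals inside $[0,t]$ (with $\bar X^n=X$ at the points of $\pi^n$ and the usual truncation of the last interval at $t$) gives the first equality in \eqref{eq: Stratonovich vs gamma RIE}, and the resulting mid-point sum is, by definition, exactly the Stratonovich-type Riemann sum $\int_0^t X_r\otimes\dd^{\frac{1}{2},\pi^n}X_r$. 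For (ii): writing, on each interval, $X_{t^n_k}+\tfrac{1}{2}X_{t^n_k,t^n_{k+1}}=(X_{t^n_k}+\gamma X_{t^n_k,t^n_{k+1}})+(\tfrac{1}{2}-\gamma)X_{t^n_k,t^n_{k+1}}$ splits the mid-point sum into $\int_0^t X_r\otimes\dd^{\gamma,\pi^n}X_r$ plus $(\tfrac{1}{2}-\gamma)\sum_k X_{t^n_k\wedge t,t^n_{k+1}\wedge t}\otimes X_{t^n_k\wedge t,t^n_{k+1}\wedge t}$ (the two expressions agreeing exactly at the points of $\pi^n$), and the latter equals $\tfrac{1}{2}[X]^{\gamma,\pi^n}_t$ by the defining identity $[X]^{\gamma,\pi^n}_t=(1-2\gamma)\sum_k X_{t^n_k\wedge t,t^n_{k+1}\wedge t}\otimes X_{t^n_k\wedge t,t^n_{k+1}\wedge t}$ from Lemma~\ref{lemma: quadratic variation under gamma RIE}. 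Since Property~$\gamma$-\textup{(RIE)} gives $\int_0^\cdot X_r\otimes\dd^{\gamma,\pi^n}X_r\to\int_0^\cdot X_r\otimes\dd^{\gamma,\pi}X_r$ and Lemma~\ref{lemma: quadratic variation under gamma RIE} gives $[X]^{\gamma,\pi^n}\to[X]^{\gamma,\pi}$, both uniformly, passing to the limit yields \eqref{eq: Stratonovich vs gamma RIE}, first for $s=0$ and then for general $(s,t)$ by additivity over the interval.

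For (iii) I would follow the proof of Proposition~\ref{prop: rough path lift under gamma RIE}. Let $X^n$ be the left-point step approximation of $X$ along $\pi^n$ (so $X^n_{t^n_k}=X_{t^n_k}$) and $\X^{\gamma,\pi^n}_{s,t}:=\big(\int_0^\cdot X_r\otimes\dd^{\gamma,\pi^n}X_r\big)_{s,t}-X^n_s\otimes X_{s,t}$. Combining (i)--(ii) with $X^n_{t^n_k}=\bar X^n_{t^n_k}=X_{t^n_k}$, one obtains for $s,t\in\pi^n$ the clean identity $\bar{\X}^n_{s,t}=\X^{\gamma,\pi^n}_{s,t}+\tfrac{1}{2}[X]^{\gamma,\pi^n}_{s,t}$. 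For a general pair $(s,t)\in\Delta_T$: if $[s,t]$ lies inside a single interval $[t^n_k,t^n_{k+1}]$, then $\bar X^n$ is affine on $[s,t]$, so $\bar{\X}^n_{s,t}=\tfrac{1}{2}\bar X^n_{s,t}\otimes\bar X^n_{s,t}$ with $|\bar X^n_{s,t}|\le|X_{t^n_k,t^n_{k+1}}|$, whence $|\bar{\X}^n_{s,t}|^{p/2}\lesssim c(t^n_k,t^n_{k+1})$ by \eqref{eq: bounded p/2 variation assumption}; otherwise, letting $t^n_{k_0},t^n_{k_1}$ be the smallest and largest points of $\pi^n$ strictly inside $(s,t)$, Chen's relation for $\bar{\X}^n$ expresses $\bar{\X}^n_{s,t}$ through $\bar{\X}^n_{s,t^n_{k_0}}$, $\bar{\X}^n_{t^n_{k_0},t^n_{k_1}}$, $\bar{\X}^n_{t^n_{k_1},t}$ and two cross terms $\bar X^n_{\cdot,\cdot}\otimes\bar X^n_{\cdot,\cdot}$, where the two outer terms are handled by the single-interval estimate, the cross terms are bounded (via $|\bar X^n_{s,t^n_{k_0}}|\le|X_{t^n_{k_0-1},t^n_{k_0}}|$, etc.) by $c(t^n_{k_0-1},t^n_{k_1+1})$, and the middle term satisfies $|\bar{\X}^n_{t^n_{k_0},t^n_{k_1}}|^{p/2}\lesssim|\X^{\gamma,\pi^n}_{t^n_{k_0},t^n_{k_1}}|^{p/2}+|[X]^{\gamma,\pi^n}_{t^n_{k_0},t^n_{k_1}}|^{p/2}$ by the clean identity above. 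Summing over an arbitrary partition of $[0,T]$, the control-function contributions sum to $\lesssim c(0,T)$ by superadditivity and bounded overlap of the enlarged intervals $[t^n_{k_0-1},t^n_{k_1+1}]$, the $\X^{\gamma,\pi^n}$-contributions sum to $\lesssim\|\X^{\gamma,\pi^n}\|_{\frac{p}{2}}^{p/2}$, and the $[X]^{\gamma,\pi^n}$-contributions, whose intervals are pairwise disjoint, sum to $\lesssim\|[X]^{\gamma,\pi^n}\|_{\frac{p}{2}}^{p/2}\le\|[X]^{\gamma,\pi^n}\|_{1}^{p/2}$ since $\tfrac{p}{2}>1$. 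By the proof of Proposition~\ref{prop: rough path lift under gamma RIE}, $\sup_n\|\X^{\gamma,\pi^n}\|_{\frac{p}{2}}<\infty$, and by Lemma~\ref{lemma: quadratic variation under gamma RIE} the $1$-variations $\|[X]^{\gamma,\pi^n}\|_{1}$ are bounded uniformly in $n$; hence $\sup_n\|\bar{\X}^n\|_{\frac{p}{2}}<\infty$.

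The main obstacle is the bookkeeping in (iii) for pairs $(s,t)$ not aligned with $\pi^n$: isolating the single-interval pieces, controlling the two cross terms, and then combining the control-function estimates (superadditivity and bounded overlap) with the bounded-$1$-variation bound for $[X]^{\gamma,\pi^n}$ to obtain a uniform $\tfrac{p}{2}$-variation bound. This step is, however, a direct adaptation of the argument already carried out in the proof of Proposition~\ref{prop: rough path lift under gamma RIE}.
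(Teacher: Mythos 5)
Your treatment of parts (i) and (ii) is correct and coincides with the paper's: the affine calculation on each partition interval, the split $X_{t^n_k}+\tfrac12 X_{t^n_k,t^n_{k+1}} = (X_{t^n_k}+\gamma X_{t^n_k,t^n_{k+1}}) + (\tfrac12-\gamma)X_{t^n_k,t^n_{k+1}}$, and the appeal to Property $\gamma$-(RIE) and Lemma~\ref{lemma: quadratic variation under gamma RIE} for the two uniform limits.

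For part (iii) there is, however, a genuine gap in the single-interval estimate. When $[s,t]\subset[t^n_k,t^n_{k+1}]$ you correctly write $\bar{\X}^n_{s,t}=\tfrac12\bar X^n_{s,t}\otimes\bar X^n_{s,t}$, but you then pass immediately to $|\bar X^n_{s,t}|\le|X_{t^n_k,t^n_{k+1}}|$, which yields only the $(s,t)$-independent bound $|\bar{\X}^n_{s,t}|^{p/2}\lesssim c(t^n_k,t^n_{k+1})$. This estimate is not summable over an arbitrary partition $\mathcal{P}$ of $[0,T]$: if $\mathcal{P}$ places $M$ sub-intervals inside a single $[t^n_k,t^n_{k+1}]$, your bound gives $M\cdot c(t^n_k,t^n_{k+1})$ and $M$ is unbounded, so the superadditivity/bounded-overlap argument you invoke breaks down for exactly these pieces. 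The paper instead keeps the linearity of $\bar X^n$ on $[t^n_k,t^n_{k+1}]$ in the estimate, obtaining
\[
|\bar{\X}^n_{s,t}|^{p/2}\;=\;\tfrac{1}{2^{p/2}}\,|t-s|^p\,\frac{|X_{t^n_k,t^n_{k+1}}|^p}{|t^n_{k+1}-t^n_k|^p}\;\le\;\frac{|t-s|}{|t^n_{k+1}-t^n_k|}\,\|X\|_{p,[t^n_k,t^n_{k+1}]}^p,
\]
so that summing over sub-intervals of $[t^n_k,t^n_{k+1}]$ yields at most $\|X\|_{p,[t^n_k,t^n_{k+1}]}^p$, and the total contribution is $\lesssim\|X\|_p^p$. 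Equivalently, you could stop at $|\bar{\X}^n_{s,t}|^{p/2}\lesssim|\bar X^n_{s,t}|^p$ and use that $\|\bar X^n\|_p\le\|X\|_p$. Either way, the factor $\tfrac{|t-s|}{|t^n_{k+1}-t^n_k|}$ (or the $\bar X^n$-increment itself) must be retained; replacing $|\bar X^n_{s,t}|$ by the full increment $|X_{t^n_k,t^n_{k+1}}|$ too early is what makes your estimate fail. The remainder of your part-(iii) argument — Chen's relation, the clean identity $\bar{\X}^n_{s,t}=\X^{\gamma,\pi^n}_{s,t}+\tfrac12[X]^{\gamma,\pi^n}_{s,t}$ at partition points, the disjointness of the middle intervals, the uniform bounds on $\|\X^{\gamma,\pi^n}\|_{p/2}$ and $\|[X]^{\gamma,\pi^n}\|_1$ — matches the paper's strategy and would go through once the single-interval estimate is corrected.
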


\begin{proof}
  Let $n \in \N$ and $0 \leq k \leq N_{n-1}$. By definition, for $t \in [t^n_k,t^n_{k+1}]$, we have
  \begin{equation*}
    \bar{X}^n_t = X_{t^n_k} + \frac{t-t^n_k}{t^n_{k+1}-t^n_k} X_{t^n_k,t^n_{k+1}},
  \end{equation*}
  which gives that
  \begin{equation}\label{eq: stratonovich pr1}
    \begin{split}
    &\int_{t^n_k}^{t^n_{k+1}} \bar{X}^n_r \otimes \d \bar{X}^n_r = (X_{t^n_k} + \frac{1}{2} X_{t^n_k,t^n_{k+1}}) \otimes X_{t^n_k,t^n_{k+1}} \\
    &\quad = (X_{t^n_k} + \gamma X_{t^n_k,t^n_{k+1}}) \otimes X_{t^n_k,t^n_{k+1}} + \frac{1}{2}(1 - 2 \gamma) X_{t^n_k,t^n_{k+1}} \otimes X_{t^n_k,t^n_{k+1}}.
    \end{split}
  \end{equation}
  Lemma~\ref{lemma: quadratic variation under gamma RIE} then implies the uniform convergence and~\eqref{eq: Stratonovich vs gamma RIE}.

  We now show that $(\bar{\X}^n)_{n \in \N}$ has uniformly bounded $\frac{p}{2}$-variation. Let $(s,t) \in \Delta_T$. If $t^n_k \le s < t \le t^n_{k+1}$ for some $k$, then we estimate
  \begin{equation}\label{eq:stratonovich pr2}
    \begin{split}
    |\bar{\X}^n_{s,t}|^{\frac{p}{2}} &= \Big| \int_s^t \bar{X}^n_{s,r} \otimes \d \bar{X}^n_r \Big|^{\frac{p}{2}} \le \Big| \int_s^t (r-s) \frac{|X_{t^n_k,t^n_{k+1}}|^2}{|t^n_{k+1} - t^n_k|^2} \dd r \Big|^{\frac{p}{2}} \\
    & = \frac{1}{2^{\frac{p}{2}}} |t-s|^p \frac{|X_{t^n_k,t^n_{k+1}}|^p}{|t^n_{k+1} - t^n_k|^p} \le \frac{|t-s|}{|t^n_{k+1} - t^n_k|} \|X\|_{p,[t^n_k,t^n_{k+1}]}^p.
    \end{split}
  \end{equation}
  Otherwise, let $k_0$ be the smallest $k$ such that $t^n_k \in (s,t)$, and let $k_1$ be the largest such $k$. It is straightforward to see that $(\bar{X}^n,\bar{\X}^n)$ satisfies Chen's relation:
  \begin{equation*}
    \bar{\X}^n_{s,t} = \bar{\X}^n_{s,u} + \bar{\X}^n_{u,t} + \bar{X}^n_{s,u} \otimes \bar{X}^n_{u,t}
  \end{equation*}
  for all $s \leq u \leq t$, from which it follows that
  \begin{equation*}
    \bar{\X}^n_{s,t} = \bar{\X}^n_{s,t^n_{k_0}} + \bar{\X}^n_{t^n_{k_0},t^n_{k_1}} + \bar{\X}^n_{t^n_{k_1},t} + \bar{X}^n_{s,t^n_{k_0}} \otimes \bar{X}^n_{t^n_{k_0}, t^n_{k_1}} + \bar{X}^n_{s,t^n_{k_1}} \otimes \bar{X}^n_{t^n_{k_1}, t}.
  \end{equation*}
  Recalling the calculation~\eqref{eq: stratonovich pr1}, we get that
  \begin{equation*}
    |\bar{\X}^n_{t^n_{k_0},t^n_{k_1}}|^{\frac{p}{2}} \lesssim \Big| \Big( \int_0^\cdot X_s \otimes \d^{\gamma,\pi^n} X_s \Big)_{t^n_{k_0},t^n_{k_1}} - X_{t^n_{k_0}} \otimes X_{t^n_{k_0},t^n_{k_1}} \Big|^{\frac{p}{2}} + |[X]^{\gamma,\pi^n}_{t^n_{k_0},t^n_{k_1}}|^{\frac{p}{2}},
  \end{equation*}
  where $[X]^{\gamma,\pi^n}$ is defined in Lemma~\ref{lemma: quadratic variation under gamma RIE}. Using the inequality in~\eqref{eq: bounded p/2 variation assumption} and Lemma~\ref{lemma: quadratic variation under gamma RIE}, we see that there exists a control function $\bar{c}$ such that the right-hand side is bounded from above by $\bar{c}(t^n_{k_0},t^n_{k_1})$. If we combine this with the estimate~\eqref{eq:stratonovich pr2} and a simple estimate for the terms $\bar{X}^n_{s,t^n_{k_0}} \otimes \bar{X}^n_{t^n_{k_0}, t^n_{k_1}}$ and $\bar{X}^n_{s,t^n_{k_1}} \otimes \bar{X}^n_{t^n_{k_1}, t}$, we can conclude that $\| \bar{\X}^n \|_{\frac{p}{2}} \lesssim \bar{c}(0,T) + \|X\|_p^2$, which completes the proof.
\end{proof}

Having made these preparations, we are now able to prove that the rough integral can be obtained as a limit of (non-compensated) general Riemann sums, given that the driving path satisfies Property $\gamma$-\textup{(RIE)}.

\begin{theorem}\label{thm: general pathwise stochastic integral}
  Let $X$ and $\pi = (\pi^n)_{n\in \N}$ be as in Assumption~\ref{ass: partitions}. Suppose that $X$ satisfies Property $\gamma$-\textup{(RIE)} relative to some $\gamma \in [0,1]$, $p \in (2,3)$ and a sequence of partitions $\pi = (\pi^n)_{n \in \N}$. Let $q > 0$ be such that $\frac{2}{p} + \frac{1}{q} > 1$ and let $(Y,Y') \in \mathscr{C}^{p,q}_X([0,T];\R^{m \times d})$ be a controlled path. Then, the rough integral $\int Y \d \bX^\gamma$ satisfies
  \begin{equation*}
    \int_0^t Y_s \dd \bX_s^\gamma 
    =\int_0^t Y_s \dd^{\gamma,\pi} X_s
    := \lim_{n\to \infty} \sum_{k=0}^{N_n-1} (Y_{t^n_k} +\gamma Y_{t^n_k,t^n_{k+1}}) X_{t^n_k \wedge t, t^n_{k+1} \wedge t},
  \end{equation*}
  where the convergence is uniform in $t \in [0,T]$.
\end{theorem}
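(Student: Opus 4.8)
The plan is to recover $\int_0^\cdot Y\,\dd\bX^\gamma$ by passing through the piecewise linear interpolations $\bar X^n$ of $X$ along $\pi$ from Lemma~\ref{lemma: gamma RIE}. Since each $\bar X^n$ has finite variation, the rough integral against its canonical lift collapses to a Riemann--Stieltjes integral; letting $n\to\infty$ via the continuity of rough integration produces the integral against a Stratonovich-type lift, and the asserted integral is then obtained by subtracting the quadratic-variation correction of Lemma~\ref{lemma: quadratic variation under gamma RIE}.

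Concretely, I would set $\bar\X^n_{s,t}:=\int_s^t\bar X^n_{s,r}\otimes\dd\bar X^n_r$, $\bar\bX^n:=(\bar X^n,\bar\X^n)$, and $\tilde\bX:=(X,\X^\gamma+\tfrac12[X]^{\gamma,\pi})$. Combining~\eqref{eq: rough path lift via gamma RIE} with Lemmas~\ref{lemma: gamma RIE} and~\ref{lemma: quadratic variation under gamma RIE} yields that $\bar X^n\to X$ and $\bar\bX^n\to\tilde\bX$ uniformly, that $\sup_n(\|\bar X^n\|_p+\|\bar\X^n\|_{p/2})<\infty$, and that $\tilde\bX$ is a $p$-rough path (adding the increments of the finite $1$-variation path $\tfrac12[X]^{\gamma,\pi}$ to $\X^\gamma$ preserves Chen's relation, and finite $1$-variation embeds into finite $\tfrac p2$-variation). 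By the standard interpolation argument, $\bar\bX^n\to\tilde\bX$ in the $p'$-variation rough path metric for any $p'\in(p,3)$; I fix such a $p'$ with $\tfrac2{p'}+\tfrac1q>1$. Moreover $(Y,Y')$ is a controlled path also with respect to $\bar X^n$, with remainder $R^{Y,n}_{s,t}=R^Y_{s,t}+Y'_s(X-\bar X^n)_{s,t}$, and it converges (uniformly, with uniform variation bounds) to $(Y,Y')$ relative to $X$; hence the continuity of rough integration gives $\int_0^\cdot Y_r\,\dd\bar\bX^n_r\to\int_0^\cdot Y_r\,\dd\tilde\bX_r$ uniformly. Finally, since $\bar X^n$ has finite variation and $\bar\bX^n$ is its canonical lift, $\int_0^t Y_r\,\dd\bar\bX^n_r$ equals the Riemann--Stieltjes integral $\int_0^t Y_r\,\dd\bar X^n_r$ (see, e.g., \cite{Friz2020}), which, $\bar X^n$ being linear on each partition interval, equals $\sum_k\big(\text{average of }Y\text{ over }[t^n_k\wedge t,\,t^n_{k+1}\wedge t]\big)\,X_{t^n_k\wedge t,\,t^n_{k+1}\wedge t}$.

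It remains to convert these averaged Riemann sums into the sums in the statement. Expanding $Y_r=Y_{t^n_k}+Y'_{t^n_k}X_{t^n_k,r}+R^Y_{t^n_k,r}$ on $[t^n_k,t^n_{k+1}]$, using that the average of $X_{t^n_k,\cdot}$ over this interval equals $\tfrac12X_{t^n_k,t^n_{k+1}}$ plus the average of $X_\cdot-\bar X^n_\cdot$, and invoking the identity $(\tfrac12-\gamma)X_{t^n_k,t^n_{k+1}}\otimes X_{t^n_k,t^n_{k+1}}=\tfrac12[X]^{\gamma,\pi^n}_{t^n_k,t^n_{k+1}}$ (Lemma~\ref{lemma: quadratic variation under gamma RIE}), one obtains
\begin{equation*}
  \int_0^t Y_r\,\dd\bar X^n_r=\sum_{k=0}^{N_n-1}(Y_{t^n_k}+\gamma Y_{t^n_k,t^n_{k+1}})X_{t^n_k\wedge t,\,t^n_{k+1}\wedge t}+\tfrac12\sum_{k=0}^{N_n-1}Y'_{t^n_k}[X]^{\gamma,\pi^n}_{t^n_k\wedge t,\,t^n_{k+1}\wedge t}+E_n(t),
\end{equation*}
where $E_n(t)$ gathers the remainder terms (differences of averaged $R^Y_{t^n_k,\cdot}$ and $\gamma R^Y_{t^n_k,t^n_{k+1}}$, paired with $X_{t^n_k\wedge t,\,t^n_{k+1}\wedge t}$), the averaged $(X-\bar X^n)$ terms paired with $X_{t^n_k\wedge t,\,t^n_{k+1}\wedge t}$, and a boundary correction from the partition interval straddling $t$. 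Granting that $E_n\to0$ uniformly and that $\tfrac12\sum_{k}Y'_{t^n_k}[X]^{\gamma,\pi^n}_{t^n_k\wedge t,\,t^n_{k+1}\wedge t}\to\tfrac12\int_0^t Y'_r\,\dd[X]^{\gamma,\pi}_r$ uniformly in $t$ — the latter from $[X]^{\gamma,\pi^n}\to[X]^{\gamma,\pi}$ uniformly with uniformly bounded $1$-variation, the continuity of $Y'$, and summation by parts — and using the identity $\int_0^t Y_r\,\dd\tilde\bX_r=\int_0^t Y_r\,\dd\bX^\gamma_r+\tfrac12\int_0^t Y'_r\,\dd[X]^{\gamma,\pi}_r$, which holds because $\tilde\bX$ and $\bX^\gamma$ share their first level so that their second levels differ by the increments of the finite $1$-variation path $\tfrac12[X]^{\gamma,\pi}$ (integrated by the rough integral as a Young integral), I conclude that $\sum_{k}(Y_{t^n_k}+\gamma Y_{t^n_k,t^n_{k+1}})X_{t^n_k\wedge t,\,t^n_{k+1}\wedge t}\to\int_0^t Y_r\,\dd\bX^\gamma_r$ uniformly in $t$, as claimed.

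The main obstacle is the estimate $E_n\to0$, together with the companion step for the quadratic-variation sum, uniformly in $t$. On each $[t^n_k,t^n_{k+1}]$ the controlled-path structure and~\eqref{eq: bounded p/2 variation assumption} bound the averaged remainder increment by $c(t^n_k,t^n_{k+1})^{1/r}$ (with $\tfrac1r=\tfrac1p+\tfrac1q$) and the averaged $(X-\bar X^n)$ increment by $c(t^n_k,t^n_{k+1})^{1/p}$; multiplying by $|X_{t^n_k,t^n_{k+1}}|\lesssim c(t^n_k,t^n_{k+1})^{1/p}$ and summing via superadditivity yields a bound of order $\big(\sup_k c(t^n_k,t^n_{k+1})\big)^{\delta}c(0,T)$ with $\delta=\tfrac2p+\tfrac1q-1>0$, and a summation-by-parts argument reduces the quadratic-variation sum to the same type of estimate. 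Thus everything — including the uniform convergence $\bar X^n\to X$ used above — hinges on $\sup_k c(t^n_k,t^n_{k+1})\to0$ as $n\to\infty$, i.e. on the $c$-mass of the partition intervals vanishing. This is immediate when $|\pi^n|\to0$, but under the bare oscillation hypothesis of Assumption~\ref{ass: partitions} one must additionally rule out, or absorb, a partition interval that never shrinks while $X$ oscillates within it: on such an interval the path increment, hence the local pieces of $\bX^\gamma$ and every term of $E_n$, ought to degenerate, so that it contributes nothing to either side. A secondary, routine point is the careful treatment of the single partition interval straddling $t$, needed so that all the convergences above are genuinely uniform in $t\in[0,T]$.
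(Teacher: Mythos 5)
Your strategy — pass through the piecewise linear interpolations $\bar X^n$, use continuity of rough integration to obtain the Stratonovich-type lift $\tilde\bX = (X,\X^\gamma+\tfrac12[X]^{\gamma,\pi})$, then subtract the quadratic-variation correction — is the same as the paper's. The one structural difference is decisive, however: the paper integrates the piecewise-linear interpolation $\bar Y^n$ of the integrand against $\bar\bX^n$ (citing \cite[Theorem~4.19]{Perkowski2016} for the convergence of $(\bar Y^n,Y',R^{\bar Y^n})$ to $(Y,Y',R^Y)$), so that $\int_0^t\bar Y^n\,\dd\bar X^n$ is \emph{exactly} the mid-point Riemann sum $\sum_k(Y_{t^n_k}+\tfrac12 Y_{t^n_k,t^n_{k+1}})X_{t^n_k\wedge t,t^n_{k+1}\wedge t}$ — no residual error at all. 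You instead integrate $Y$ itself against $\bar\bX^n$, which produces averaged integrand values and hence an extra remainder $E_n$ that you must show vanishes.

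There is a genuine gap precisely here. In $E_n$ you have the contribution
\begin{equation*}
  \sum_{k=0}^{N_n-1} Y'_{t^n_k}\,\Big(\tfrac{1}{t^n_{k+1}-t^n_k}\int_{t^n_k}^{t^n_{k+1}}(X_r-\bar X^n_r)\,\dd r\Big)\otimes X_{t^n_k\wedge t,\,t^n_{k+1}\wedge t}.
\end{equation*}
Bounding the averaged $(X-\bar X^n)$ increment by $c(t^n_k,t^n_{k+1})^{1/p}$ and multiplying by $|X_{t^n_k,t^n_{k+1}}|\lesssim c(t^n_k,t^n_{k+1})^{1/p}$ gives, per interval, a quantity of order $c(t^n_k,t^n_{k+1})^{2/p}$. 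Since $p\in(2,3)$ implies $2/p<1$, the superadditivity argument does \emph{not} close: the correct exponent for this term is $\delta=2/p-1<0$, not $\delta=2/p+1/q-1>0$ as you state. (The exponent $2/p+1/q-1$ is correct only for the terms involving the remainder $R^Y$, where one gains an extra $1/q$ from its $r$-variation control; $Y'_{t^n_k}$ itself is just a bounded factor and contributes no such gain.) One can salvage the estimate by comparing $\int Y\,\dd\bar X^n$ to $\int\bar Y^n\,\dd\bar X^n$ — both rough integrals converge to $\int Y\,\dd\tilde\bX$, so their difference, which contains exactly this problematic term, must tend to zero — but at that point one has re-derived the paper's argument and the detour through averaging bought nothing. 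Your closing remark about $\sup_k c(t^n_k,t^n_{k+1})\to 0$ under the bare oscillation hypothesis of Assumption~\ref{ass: partitions} is a legitimate subtlety, but it concerns both proofs equally (the paper's assertion that $\bar X^n\to X$ uniformly needs it too), so it does not distinguish your route from the paper's.
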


\begin{proof}
  We denote by $(\bar{X}^n)_{n\in \N}$ and $(\bar{Y}^n)_{n\in \N}$ the piecewise linear interpolation of $X$ and $Y$, respectively, along $\pi = (\pi^n)_{n \in \N}$. Thus, $(\bar{Y}^n,Y')$ is controlled by $\bar{X}^n$, with remainder $R^{\bar{Y}^n}_{s,t} = \bar{Y}^n_{s,t} - Y'_s \bar{X}^n_{s,t}$, $(s,t) \in \Delta_T$. As shown in the proof of~\cite[Theorem~4.19]{Perkowski2016}, if $p' > p$ and $q' > q$ such that $\frac{2}{p'} + \frac{1}{q'} > 1$, then $(\bar{Y}^n,Y',R^{\bar{Y}^n})$ converges in $(q',p',r')$-variation to $(Y,Y',R^Y)$, where $\frac{1}{r'} = \frac{1}{p'} + \frac{1}{q'}$.

  Since the sequence $(\bar{X}^n)_{n \in \N}$ has uniformly bounded $p$-variation and $\bar{X}^n$ converges uniformly to $X$ as $n \to \infty$, it follows by interpolation that $\bar{X}^n$ converges to $X$ with respect to the $p'$-variation norm, i.e.~$\|\bar{X}^n - X\|_{p'} \to 0$ as $n \to \infty$. It follows similarly using Lemma~\ref{lemma: gamma RIE} that $\|(\bar{\X}^n - (\X^\gamma + \frac{1}{2} [X]^{\gamma,\pi})\|_{\frac{p'}{2}} \to 0$ and, hence, that $\|(\bar{X}^n,\bar{\X}^n) - (X,\X^\gamma + \frac{1}{2} [X]^{\gamma,\pi})\|_{p'} \to 0$ as $n \to \infty$.

  The continuity of the It{\^o}--Lyons map, see e.g. \cite[Theorem~4.17]{Friz2020}, yields the uniform convergence of the rough integrals $\int \bar{Y}^n \dd (\bar{X}^n,\bar{\X}^n)$ to the rough integral $\int Y \dd (X, \X^\gamma + \frac{1}{2} [X]^{\gamma,\pi})$. But, for every $t \in [0,T]$, it holds that
  \begin{align*}
    &\lim_{n \to \infty} \int_0^t \bar{Y}^n_s \dd (\bar{X}^n,\bar{\X}^n)_s \\
    &\quad = \lim_{n \to \infty} \int_0^t \bar{Y}^n_s \dd \bar{X}^n_s \\
    &\quad = \lim_{n \to \infty} \sum_{k=0}^{N_n-1} (Y_{t^n_k} + \frac{1}{2} Y_{t^n_k,t^n_{k+1}}) X_{t^n_k \wedge t, t^n_{k+1} \wedge t} \\
    &\quad = \lim_{n \to \infty} \bigg(\sum_{k=0}^{N_n-1} (Y_{t^n_k} + \gamma Y_{t^n_k,t^n_{k+1}}) X_{t^n_k \wedge t, t^n_{k+1} \wedge t} + \frac{1}{2}(1 - 2 \gamma) \sum_{k=0}^{N_n-1} Y_{t^n_k,t^n_{k+1}} X_{t^n_k \wedge t, t^n_{k+1} \wedge t} \bigg).
  \end{align*}
  Since $(Y,Y') \in \mathscr{C}^{p,q}_X$, it is immediate that the second term on the right-hand side converges uniformly to $\frac{1}{2} \int_0^t Y'_s \dd [X]^{\gamma,\pi}_s$, $t \in [0,T]$. Thus, we have that
  \begin{align*}
    &\lim_{n \to \infty} \sum_{k=0}^{N_n-1} (Y_{t^n_k} + \gamma Y_{t^n_k,t^n_{k+1}}) X_{t^n_k \wedge t, t^n_{k+1} \wedge t} \\
    &\quad = \lim_{n \to \infty} \int_0^t \bar{Y}^n_s \dd (\bar{X}^n,\bar{\X}^n)_s - \frac{1}{2} \int_0^t Y'_s \dd [X]^{\gamma,\pi}_s \\
    &\quad = \int_0^t Y_s \dd (X, \X^{\gamma,\pi} + \frac{1}{2} [X]^{\gamma,\pi})_s - \frac{1}{2} \int_0^t Y'_s \dd [X]^{\gamma,\pi}_s \\
    &\quad = \lim_{|\mathcal{P}| \to 0} \sum_{[u,v] \in \mathcal{P}} Y_u  X_{u,v} + Y'_u (\X^\gamma + \frac{1}{2} [X]^{\gamma,\pi})_{u,v} -  \frac{1}{2} \lim_{|\mathcal{P}| \to 0} \sum_{[u,v] \in \mathcal{P}} Y'_u  [X]^{\gamma,\pi}_{u,v} \\
    &\quad = \lim_{|\mathcal{P}| \to 0} \sum_{[u,v] \in \mathcal{P}} Y_u X_{u,v} + Y'_u \X^\gamma_{u,v} \\  &\quad = \int_0^t Y_s \dd \bX^\gamma_s,
  \end{align*}
  where the limit is taken over any sequence of partitions $\mathcal P$ of $[0,t]$ with vanishing mesh. 
\end{proof}

\begin{remark}
  Theorem~\ref{thm: general pathwise stochastic integral} is a generalization of \cite[Theorem~4.19]{Perkowski2016} which states that the rough integral can be obtained as a limit of left-point Riemann sums, given that the driving path satisfies Property $\gamma$-\textup{(RIE)} for $\gamma = 0$.
\end{remark}

\begin{remark}
The rough path lift $\mathbb X^2$ defined in \cite[Lemma~4.7]{Cont2019} coincides with the symmetric part of the rough path lift $\X^\gamma$ for $\gamma = 0$ defined in Proposition~\ref{prop: rough path lift under gamma RIE} (given that $X$ satisfies Property \textup{(RIE)} relative to the dyadic Lebesgue partitions). The corresponding rough path then is a so-called reduced rough path, whereas $\bX^\gamma$ defines a classical rough path in the sense that the area (the antisymmetric part) is also included.
\end{remark}

While we discuss several examples of paths~$X$, generated by stochastic processes, satisfying the assumptions of Theorem~\ref{thm: general pathwise stochastic integral} in Section~\ref{sec: application to stochastic integration} below, let us briefly provide some standard examples of controlled paths to demonstrate the scope of Theorem~\ref{thm: general pathwise stochastic integral}.

\begin{remark}
  Let $X \in C^{p\text{-var}}([0,T];\R^d)$ be a path of finite $p$-variation for $p\in (2,3)$.
  \begin{enumerate}
    \item[(i)] Any path $Y \in C^{q\text{-var}}([0,T];\R^{m \times d})$ for $q \in (1,2)$ such that $\frac{1}{p} + \frac{1}{q} > 1$ is a controlled path w.r.t.~$X$. In this case we can choose $Y^\prime = 0$ and the rough integral coincides with the Young integral.
    \item[(ii)] For any function $f \colon \R^d \to \R^{m \times d}$ which is twice continuously differentiable, $f(X)$ is a controlled path w.r.t.~$X$, see e.g. \cite[Example~4.7]{Perkowski2016}. 
    \item[(iii)] For a non-anticipative, path-dependent functional $F\colon [0,T]\times C([0,T];\R^d)\to \R^{m\times d}$ with suitable differentiability properties in the sense of Dupire~\cite{Dupire2019}, $F(\cdot,X)$ is controlled paths w.r.t.~$X$, see \cite[Section~4]{Ananova2023}.
    \item[(iv)] Any solution $Y$ of a rough differential equation, like
    \begin{equation*}
      Y_t = y_0 + \int_0^t F(s,Y_s) \dd \bX_s, \quad t\in [0,T],
    \end{equation*}
    is a controlled path w.r.t.~$X$, where we assume that $\bX = (X,\X)$ is a rough path and $F$ is suitable function, see e.g. \cite[Section~3]{Friz2018}.
  \end{enumerate}
\end{remark}

\subsection{On the convergence of general Riemann integrals}\label{sec: on the assumption of general Riemann integrals}

In this subsection we establish the relation of Property $\gamma$-\textup{(RIE)} to the existence of quadratic variation and L{\'e}vy area as well as to Property \textup{(RIE)} from \cite{Perkowski2016}, which reads as follows for a continuous path.

\begin{RIE}
  Let $X$ and $\pi = (\pi^n)_{n\in \N}$ be as in Assumption~\ref{ass: partitions}. We assume that
  \begin{enumerate}
    \item[(i)] the left-point Riemann sums
    \begin{equation*}
      \int_0^t X_s \otimes \d^{\pi^n} X_s := \sum_{k=0}^{N_n-1} X_{t^n_k} \otimes X_{t^n_k \wedge t, t^n_{k+1} \wedge t}, \quad t \in [0,T],
    \end{equation*}
    converge uniformly as $n \to \infty$ to a limit, which we denote by $\int_0^t X_s \otimes \d^\pi X_s$
    \item[(ii)] there exists a constant $p \in (2,3)$ and a control function $c$ such that
    \begin{equation*}
      \sup_{(s,t) \in \Delta_T} \frac{|X_{s,t}|^p}{c(s,t)} + \sup_{n \in \N} \sup_{0 \le k < \ell \le N_n} \frac{|(\int_0^{\cdot} X_s \otimes \d^{\pi^n} X_s)_{t^n_k,t^n_\ell} - X_{t^n_k} \otimes X_{t^n_k,t^n_\ell}|^{\frac{p}{2}}}{c(t^n_k,t^n_\ell)} \lesssim 1.
    \end{equation*}
  \end{enumerate}
\end{RIE}

We say that a path $X \in C([0,T];\R^d)$ satisfies \emph{Property \textup{(RIE)}} relative to $p$ and $\pi$ if $p$, $\pi$ and $X$ together satisfy Property \textup{(RIE)}.

\medskip

Let us begin by briefly summarizing the findings, which we shall prove below.

\smallskip

\noindent Case~1: For $\gamma \neq \frac{1}{2}$, we obtain:
\begin{equation*}
  \text{Property \textup{(RIE)} (i)}
  \hspace{9pt} \overset{\text{Lemma~\ref{lem: RIE eq to gamma RIE}}}{\Longleftrightarrow}\hspace{9pt} \text{Property $\gamma$-\textup(RIE) (i)}
  \hspace{9pt} \overset{\text{Lemma~\ref{lem: RIE eq to Levy and quad var}}}{\Longleftrightarrow}\hspace{9pt}
  \begin{cases}
  \text{Quadratic variation}\\
  \text{L{\'e}vy area}
  \end{cases}
\end{equation*}
and
\begin{equation*}
  \text{Property \textup{(RIE)} (ii)}
  \quad \overset{\text{Lemma~\ref{lem: RIE eq to gamma RIE}}}{\Longleftrightarrow} \quad \text{Property $\gamma$-\textup(RIE) (ii)}
  \quad \overset{\text{Lemma~\ref{lem: RIE eq to Levy and quad var}}}{\Longleftrightarrow} \quad
  \text{Regularity cond.~\ref{ass: regularity}}
\end{equation*}
\noindent Case~2: For $\gamma = \frac{1}{2}$, we obtain:
\begin{equation*}
  \text{Property \textup{(RIE)} (i)}
  \quad \overset{\text{Lemma~\ref{lem: RIE eq to gamma RIE}}}{\Longrightarrow}\quad \text{Property $\gamma$-\textup(RIE) (i)}
  \quad \overset{\text{Lemma~\ref{lem: Levy eq to 1/2 RIE}}}{\Longleftrightarrow}\quad
  \text{L{\'e}vy area}
\end{equation*}
and
\begin{equation*}
  \text{Property \textup{(RIE)} (ii)}
  \quad \overset{\text{Lemma~\ref{lem: RIE eq to gamma RIE}}}{\Longrightarrow}\quad \text{Property $\gamma$-\textup(RIE) (ii)}
  \quad \overset{\text{Lemma~\ref{lem: Levy eq to 1/2 RIE}}}{\Longleftrightarrow}\quad
  \text{Regularity cond.~\ref{ass: regularity}}
\end{equation*}

In the next lemma, we derive the relation between Property $\gamma$-\textup{(RIE)} and Property \textup{(RIE)}.

\begin{lemma}\label{lem: RIE eq to gamma RIE}
  Let $\gamma \in [0,1]$, $p \in (2,3)$, $X$ and $\pi = (\pi^n)_{n\in \N}$ be as in Assumption~\ref{ass: partitions}.
  \begin{enumerate}
    \item[(i)] Suppose $\gamma \neq \frac{1}{2}$. Then, $X$ satisfies Property \textup{(RIE)} (i) relative to $\pi$ if and only if $X$ satisfies Property $\gamma$-\textup{(RIE)} (i) relative to $\gamma$ and $\pi$, and $X$ satisfies Property \textup{(RIE)} (ii) relative to $p$ and $\pi$ if and only if $X$ satisfies Property $\gamma$-\textup{(RIE)} (ii) relative to $\gamma$, $p$ and $\pi$.
    \item[(ii)] Suppose $\gamma = \frac{1}{2}$. If $X$ satisfies Property \textup{(RIE)} (i) relative to $\pi$, then $X$ satisfies Property $\gamma$-{\textup{(RIE)}} (i) relative to $\gamma$ and $\pi$, and if $X$ satisfies Property \textup{(RIE)} (ii) relative to $p$ and $\pi$, then $X$ satisfies Property $\gamma$-\textup{(RIE)} (ii) relative to $\gamma$, $p$ and $\pi$.
  \end{enumerate}
\end{lemma}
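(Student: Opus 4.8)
The plan is to reduce both equivalences to a single linear–algebraic fact. Let $L_\gamma\colon\R^{d\times d}\to\R^{d\times d}$, $L_\gamma(M):=(1-\gamma)M-\gamma M^\top$. Since transposition is a linear isomorphism of the finite–dimensional space $\R^{d\times d}$, both $L_\gamma$ and---when it exists---$L_\gamma^{-1}$ are bounded linear operators; concretely $L_\gamma$ acts as multiplication by $1-2\gamma$ on symmetric matrices and as the identity on antisymmetric matrices, so $L_\gamma$ is invertible exactly for $\gamma\neq\tfrac12$ (and for a Euclidean norm $\|L_\gamma\|_{\mathrm{op}}\le 1$, $\|L_\gamma^{-1}\|_{\mathrm{op}}=\max\{1,|1-2\gamma|^{-1}\}$). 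Write $S^{\pi^n}_t:=\sum_k X_{t^n_k}\otimes X_{t^n_k\wedge t,t^n_{k+1}\wedge t}$ and $S^{\gamma,\pi^n}_t:=\sum_k(X_{t^n_k}+\gamma X_{t^n_k,t^n_{k+1}})\otimes X_{t^n_k\wedge t,t^n_{k+1}\wedge t}$ for the Riemann sums in Property~\textup{(RIE)}~(i) and Property~$\gamma$-\textup{(RIE)}~(i), and let $\tau_n(t)$ be the largest point of $\pi^n$ not exceeding $t$.

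For part~(i) I would first record two identities valid for all $n$ and $t$. The discrete It\^o product rule, obtained by telescoping $X_t\otimes X_t-X_0\otimes X_0$ along $\pi^n$ and expanding $a\otimes a-b\otimes b=b\otimes(a-b)+(a-b)\otimes b+(a-b)\otimes(a-b)$, reads
\begin{equation*}
  S^{\pi^n}_t+(S^{\pi^n}_t)^\top+[X]^{\pi^n}_t=X_t\otimes X_t-X_0\otimes X_0
\end{equation*}
(the $\wedge t$ truncations are harmless: terms with $t^n_k\ge t$ vanish and $X_{t^n_k\wedge t}=X_{t^n_k}$ otherwise). Feeding this, together with the elementary relation $S^{\gamma,\pi^n}_t=S^{\pi^n}_t+\gamma\sum_k X_{t^n_k,t^n_{k+1}}\otimes X_{t^n_k\wedge t,t^n_{k+1}\wedge t}$, through the only partition interval straddling $t$, one arrives at the master identity
\begin{equation*}
  S^{\gamma,\pi^n}_t=L_\gamma\!\big(S^{\pi^n}_t\big)+\gamma\big(X_t\otimes X_t-X_0\otimes X_0\big)-\gamma\,X_{\tau_n(t),t}\otimes X_{\tau_n(t),t}+\gamma\,\delta^n_t,
\end{equation*}
where $\sup_t|\delta^n_t|\lesssim\big(\sup_k|X_{t^n_k,t^n_{k+1}}|\big)\|X\|_\infty\to0$ by Assumption~\ref{ass: partitions}. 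The key observation is that $X_{\tau_n(t),t}\otimes X_{\tau_n(t),t}$ vanishes whenever $t\in\pi^n$ and converges uniformly in $t$ to a continuous limit (its limit is continuous precisely because the endpoint increments of the limiting partition intervals vanish). Hence $(S^{\gamma,\pi^n})$ converges uniformly if and only if $(L_\gamma(S^{\pi^n}))$ does. Since $L_\gamma$ is a fixed bounded linear map and $X_\cdot\otimes X_\cdot$ is fixed and continuous, uniform convergence of $(S^{\pi^n})$ forces uniform convergence of $(S^{\gamma,\pi^n})$ for every $\gamma$ (this gives the forward implications in both~(i) and~(ii) of the lemma); and for $\gamma\neq\tfrac12$, conversely, uniform convergence of $(S^{\gamma,\pi^n})$ gives that of $(L_\gamma(S^{\pi^n}))$, hence of $(S^{\pi^n})=(L_\gamma^{-1}(L_\gamma(S^{\pi^n})))$ by boundedness of $L_\gamma^{-1}$. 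For $\gamma=\tfrac12$ the operator $L_{1/2}$ is not injective---it annihilates the symmetric part, i.e.\ forgets the quadratic variation---so only the forward direction survives.

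Part~(ii) is the parameter–free counterpart and is cleaner, since the two–parameter quantities in~\eqref{eq: bounded p/2 variation assumption} are evaluated between partition points, with no truncation. Setting $P^n_{k,\ell}:=\sum_{m=k}^{\ell-1}X_{t^n_k,t^n_m}\otimes X_{t^n_m,t^n_{m+1}}$, one checks directly that $\big(\int_0^{\cdot}X_s\otimes\d^{\pi^n}X_s\big)_{t^n_k,t^n_\ell}-X_{t^n_k}\otimes X_{t^n_k,t^n_\ell}=P^n_{k,\ell}$, and, using $\big(\int_0^{\cdot}X_s\otimes\d^{\gamma,\pi^n}X_s\big)_{t^n_k,t^n_\ell}=\big(\int_0^{\cdot}X_s\otimes\d^{\pi^n}X_s\big)_{t^n_k,t^n_\ell}+\gamma\sum_{m=k}^{\ell-1}X_{t^n_m,t^n_{m+1}}\otimes X_{t^n_m,t^n_{m+1}}$ together with the identity $X_{t^n_k,t^n_\ell}\otimes X_{t^n_k,t^n_\ell}=\sum_{m=k}^{\ell-1}X_{t^n_m,t^n_{m+1}}\otimes X_{t^n_m,t^n_{m+1}}+P^n_{k,\ell}+(P^n_{k,\ell})^\top$, that $\big(\int_0^{\cdot}X_s\otimes\d^{\gamma,\pi^n}X_s\big)_{t^n_k,t^n_\ell}-(X_{t^n_k}+\gamma X_{t^n_k,t^n_\ell})\otimes X_{t^n_k,t^n_\ell}=L_\gamma(P^n_{k,\ell})$. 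Since the common summand $|X_{s,t}|^p/c(s,t)$ is untouched, the bound $|L_\gamma(P^n_{k,\ell})|\lesssim|P^n_{k,\ell}|$ yields Property~\textup{(RIE)}~(ii) $\Rightarrow$ Property~$\gamma$-\textup{(RIE)}~(ii) with the same control function, for every $\gamma$; and for $\gamma\neq\tfrac12$ the reverse bound $|P^n_{k,\ell}|\lesssim|L_\gamma(P^n_{k,\ell})|$ gives the converse, again with the same control function. For $\gamma=\tfrac12$ the non–injectivity of $L_{1/2}$ blocks the converse, consistently with the statement.

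The routine ingredients are the telescoping/product–rule identities and the elementary spectral description of $L_\gamma$; the hard part will be the passage from convergence at partition points to uniform–in–$t$ convergence in part~(i), namely the claim that $t\mapsto X_{\tau_n(t),t}\otimes X_{\tau_n(t),t}$ converges uniformly to a continuous limit. This is exactly where Assumption~\ref{ass: partitions}---which forces the endpoint increments of the (possibly non–shrinking) partition intervals to vanish, a property strictly weaker than vanishing mesh---does work that a mere mesh condition would not; everything downstream is a formal consequence of feeding the two identities through the continuity and (for $\gamma\neq\tfrac12$) invertibility of the fixed operator $L_\gamma$.
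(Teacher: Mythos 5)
Your proof is correct and rests on the same underlying identities as the paper's (the discrete product rule relating $S^{\pi^n}$, $[X]^{\pi^n}$ and $X_t\otimes X_t$, and the reduction of the two-parameter quantity to $P^n_{k,\ell}$), but the repackaging through the fixed operator $L_\gamma(M)=(1-\gamma)M-\gamma M^\top$ is a genuinely cleaner route. The paper establishes the forward direction by combining \eqref{eq:gamma integral in Riemann integral} with the cited convergence and uniform $1$-variation bound of $([X]^{\pi^n})_n$, and the converse by first invoking Lemma~\ref{lemma: quadratic variation under gamma RIE} to recover $[X]^{\pi^n}$ from the $\gamma$-Riemann sums; for the regularity bound it uses a three-term triangle inequality (\eqref{eq: bound 1}--\eqref{eq: bound 2}). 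Your computation shows that, when evaluated at partition points, the three terms actually collapse to the exact identity $\big(\int_0^{\cdot}X\otimes\d^{\gamma,\pi^n}X\big)_{t^n_k,t^n_\ell}-(X_{t^n_k}+\gamma X_{t^n_k,t^n_\ell})\otimes X_{t^n_k,t^n_\ell}=L_\gamma(P^n_{k,\ell})$, so that both directions of (ii) in both parts of the lemma follow at once from $\|L_\gamma\|\le1$ and, for $\gamma\neq\tfrac12$, $\|L_\gamma^{-1}\|<\infty$, with the same control function $c$ on both sides; this is sharper than the paper's estimate and makes the role of $\gamma=\tfrac12$ transparent as non-invertibility of $L_\gamma$ on symmetric matrices. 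For (i), your substitution of the discrete product rule to eliminate $[X]^{\pi^n}$ in favour of the fixed path $X_\cdot\otimes X_\cdot$ is a neat variant of the paper's use of \cite[Lemma~4.17]{Perkowski2016}; note that the residual term you flag, the uniform convergence of $t\mapsto X_{\tau_n(t),t}\otimes X_{\tau_n(t),t}$ to a continuous limit, is precisely the straddling-interval contribution to $[X]^{\pi^n}$, and is what the paper's appeal to that lemma (and to Lemma~\ref{lemma: quadratic variation under gamma RIE} in the converse direction) supplies --- so this is not an extra gap relative to the paper, merely the same external input made visible. Your proof would be fully self-contained if you cite or reprove that input.
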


\begin{proof}
  First, note that
  \begin{equation}\label{eq:gamma integral in Riemann integral}
    \begin{split}
    &\int_0^t X_s \otimes \d^{\gamma,\pi^n} X_s
    := \sum_{k=0}^{N_n-1} (X_{t^n_k} + \gamma X_{t^n_k,t^n_{k+1}}) \otimes X_{t^n_k \wedge t, t^n_{k+1} \wedge t} \\
    &\quad = \sum_{k=0}^{N_n-1} X_{t^n_k} \otimes X_{t^n_k \wedge t, t^n_{k+1} \wedge t} + \gamma \sum_{k=0}^{N_n-1} X_{t^n_k \wedge t, t^n_{k+1} \wedge t} \otimes X_{t^n_k \wedge t, t^n_{k+1} \wedge t} \\
    &\qquad + \gamma \sum_{k=0}^{N_n-1} (X_{t^n_{k+1} \wedge t, t^n_{k+1}} - X_{t^n_k \wedge t, t^n_k}) \otimes X_{t^n_k \wedge t, t^n_{k+1} \wedge t} \\
    &\quad = \int_0^t X_s \otimes \d^{\pi^n} X_s
    + \gamma [X]^{\pi^n}_t + \gamma \sum_{k=0}^{N_n-1} (X_{t^n_{k+1} \wedge t, t^n_{k+1}} - X_{t^n_k \wedge t, t^n_k}) \otimes X_{t^n_k \wedge t, t^n_{k+1} \wedge t}
    \end{split}
  \end{equation}
  for $t\in [0,T]$, where we write $[X]^{\pi^n} := \sum_{k=1}^{N_n-1} X_{t^n_k \wedge \cdot, t^n_{k+1} \wedge \cdot} \otimes X_{t^n_k \wedge \cdot, t^n_{k+1} \wedge \cdot}$ and note that, for $\gamma \neq \frac{1}{2}$, $[X]^{\pi^n} = \frac{1}{1-2\gamma} [X]^{\gamma,\pi^n}$. Secondly, note that, for any control function $c$, we have
  \begin{equation}\label{eq: bound 1}
    \begin{split}
    &\sup_{0 \le k < \ell \le N_n} \frac{| (\int_0^{\cdot} X_s \otimes \d^{\gamma,\pi^n} X_s)_{t^n_k,t^n_\ell} - (X_{t^n_k} + \gamma X_{t^n_k,t^n_\ell}) \otimes X_{t^n_k,t^n_\ell}|^{\frac{p}{2}}}{c(t^n_k,t^n_\ell)}\\
    &\quad\leq 3^{\frac{p}{2}-1} \sup_{0 \le k < \ell \le N_n} \frac{|(\int_0^{\cdot} X_s \otimes \d^{\pi^n} X_s)_{t^n_k,t^n_\ell} - X_{t^n_k} \otimes X_{t^n_k,t^n_\ell}|^{\frac{p}{2}}}{c(t^n_k,t^n_\ell)} + 3^{\frac{p}{2}-1} \gamma^{\frac{p}{2}} \sup_{0 \le k < \ell \le N_n}  \frac{|[X]^{\pi^n}_{t^n_k,t^n_\ell}|^{\frac{p}{2}}}{c(t^n_k,t^n_\ell)} \\
    &\qquad + 3^{\frac{p}{2}-1} \gamma^{\frac{p}{2}} \frac{| X_{t^n_k,t^n_\ell} \otimes X_{t^n_k,t^n_\ell}|^{\frac{p}{2}}}{c(t^n_k,t^n_\ell)}
    \end{split}
  \end{equation}
  and, for $\gamma \neq \frac{1}{2}$, we get
  \begin{equation}\label{eq: bound 2}
    \begin{split}
    &\sup_{0 \le k < \ell \le N_n} \frac{| (\int_0^{\cdot} X_s \otimes \d^{\pi^n} X_s)_{t^n_k,t^n_\ell} - X_{t^n_k} \otimes X_{t^n_k,t^n_\ell}|^{\frac{p}{2}}}{c(t^n_k,t^n_\ell)}\\
    &\quad \leq
    3^{\frac{p}{2}-1} \sup_{0 \le k < \ell \le N_n} \frac{|(\int_0^{\cdot} X_s \otimes \d^{\gamma,\pi^n} X_s)_{t^n_k,t^n_\ell} - (X_{t^n_k} + \gamma X_{t^n_k,t^n_\ell}) \otimes X_{t^n_k,t^n_\ell}|^{\frac{p}{2}}}{c(t^n_k,t^n_\ell)}  \\
    &\qquad + \frac{3^{\frac{p}{2}-1} \gamma^{\frac{p}{2}} }{|1-2\gamma|^{\frac{p}{2}}} \sup_{0 \le k < \ell \le N_n}  \frac{|[X]^{\gamma,\pi^n}_{t^n_k,t^n_\ell} |^{\frac{p}{2}}}{c(t^n_k,t^n_\ell)} + 3^{\frac{p}{2}-1} \gamma^{\frac{p}{2}} \frac{|X_{t^n_k,t^n_\ell} \otimes X_{t^n_k,t^n_\ell}|^{\frac{p}{2}}}{c(t^n_k,t^n_\ell)}.
    \end{split}
  \end{equation}
  If $X$ satisfies Property \textup{(RIE)} (i), $(\int_0^{\cdot} X_s \otimes \d^{\pi^n} X_s)_{n\in \N}$ converges uniformly to $(\int_0^{\cdot} X_s \otimes \d^\pi X_s)$ and, by \cite[Lemma~4.17]{Perkowski2016}, $([X]^{\pi^n})_{n\in \N}$ converges uniformly to $[X]^\pi$ as $n \to \infty$. Moreover, again due to \cite[Lemma~4.17]{Perkowski2016}, $([X]^{\pi^n})_{n \in \N}$ has uniformly bounded $1$-variation. Hence, by \eqref{eq:gamma integral in Riemann integral} and \eqref{eq: bound 1}, Property \textup{(RIE)} (i) and (ii) imply Property $\gamma$-\textup{(RIE)} (i) and (ii), respectively, for every $\gamma \in [0,1]$.

  Conversely, if $\gamma \neq \frac{1}{2}$, using Lemma~\ref{lemma: quadratic variation under gamma RIE}, \eqref{eq:gamma integral in Riemann integral} and \eqref{eq: bound 2} yields that Property $\gamma$-\textup{(RIE)} (i) and (ii) imply Property \textup{(RIE)} (i) and (ii), respectively.
\end{proof}

To explain how Property \textup{(RIE)} and Property $\gamma$-\textup{(RIE)} are related to the existence of quadratic variation and L{\'e}vy area, we make the following observation.

\begin{remark}\label{rem:rough path and levy area}
  Let $\gamma \in [0,1]$ and let $X$ and $\pi = (\pi^n)_{n\in \N}$ be as in Assumption~\ref{ass: partitions}. Assuming the respective limits exist, we write
  \begin{equation*}
    (\X^{\gamma,\pi}_{s,t})^{ij} = \int_0^t X^i_r \dd^{\gamma,\pi} X^j_r - \int_0^s X^i_r \dd^{\gamma,\pi} X^j_r - X^i_s X^j_{s,t}, \quad(s,t) \in \Delta_T.
  \end{equation*}
  We decompose the iterated integrals into the symmetric and antisymmetric components as follows:
  \begin{align*}
    &(\X^{\gamma,\pi}_{s,t})^{ij} = \frac{1}{2} ((\X^{\gamma,\pi}_{s,t})^{ij} + (\X^{\gamma,\pi}_{s,t})^{ij}) + \frac{1}{2} ((\X^{\gamma,\pi}_{s,t})^{ij} - (\X^{\gamma,\pi}_{s,t})^{ij}) \\
    &\quad = \frac{1}{2} (X^i_{s,t} X^j_{s,t} - [X^i,X^j]^{\gamma,\pi}_{s,t}) + \frac{1}{2} (\cL^\pi_{s,t}(X)^{ij} - (X_s^i + X_t^i) X_{s,t}^j) \\
    &\quad =: \frac{1}{2} (\mathbb{S}(X)_{s,t}^{\gamma,\pi})^{ij} + \frac{1}{2} (\mathbb{A}(X)_{s,t}^{\pi})^{ij},
  \end{align*}
  for every $i, j = 1, \ldots, d$.

  For $\gamma = \frac{1}{2}$, we notice that the symmetric part reduces to $\frac{1}{2} X_{s,t} \otimes X_{s,t}$. We realize that for the Stratonovich-type rough path lift (implying the Stratonovich-type integral) to be well-posed in the rough path sense, it is only required that the antisymmetric Riemann sums converge (which do not depend on $\gamma$, see Remark~\ref{rem: Levy area}), and that the approximative L{\'e}vy area has uniformly bounded $\frac{p}{2}$-variation and the path has finite $p$-variation. For the more general case, it is additionally required that the symmetric Riemann sums converge. This suffices since the approximative quadratic variation term has uniformly bounded $1$-, thus $\frac{p}{2}$-variation by definition.
\end{remark}

Keeping Remark~\ref{rem:rough path and levy area} in mind, it is not surprising that $X$ satisfying Property ($\gamma$-)\textup{(RIE)} is equivalent to $X$ possessing L{\'e}vy area and, if $\gamma \neq \frac{1}{2}$, quadratic variation, together with Assumption~\ref{ass: regularity}. This is the content of Lemma~\ref{lem: RIE eq to Levy and quad var}, Corollary~\ref{cor: gamma RIE eq to Levy and quad var} and Lemma~\ref{lem: Levy eq to 1/2 RIE} below.

\begin{lemma}\label{lem: RIE eq to Levy and quad var}
  Let $X$ and $\pi = (\pi^n)_{n\in \N}$ be as in Assumption~\ref{ass: partitions}. Then, $X$ satisfies Property \textup{(RIE)} (i) relative to $\pi$ if and only if $X$ possesses quadratic variation and L{\'e}vy area, both along $\pi$, and $X$ satisfies Property \textup{(RIE)} (ii) relative to $p$ and $\pi$ if and only if $X$ satisfies Assumption~\ref{ass: regularity} relative to $p$ and $\pi$.
\end{lemma}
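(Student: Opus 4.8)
The plan is to reduce both equivalences to two elementary algebraic identities between the approximating sums; these make precise the symmetric/antisymmetric decomposition already recorded in Remark~\ref{rem:rough path and levy area}. Writing $X_{t^n_k\wedge t}+X_{t^n_{k+1}\wedge t}=2X_{t^n_k\wedge t}+X_{t^n_k\wedge t,\,t^n_{k+1}\wedge t}$ in each summand of $\cL^{\pi^n}_t(X)$ yields
\begin{equation*}
  \cL^{\pi^n}_t(X)=2\int_0^t X_s\otimes\d^{\pi^n}X_s+[X]^{\pi^n}_t ,\qquad t\in[0,T],
\end{equation*}
and telescoping $\sum_k\big(X_{t^n_{k+1}\wedge t}\otimes X_{t^n_{k+1}\wedge t}-X_{t^n_k\wedge t}\otimes X_{t^n_k\wedge t}\big)$ yields the discrete integration-by-parts identity
\begin{equation*}
  X_t\otimes X_t-X_0\otimes X_0=\int_0^t X_s\otimes\d^{\pi^n}X_s+\big(\int_0^t X_s\otimes\d^{\pi^n}X_s\big)^{\!\top}+[X]^{\pi^n}_t ,
\end{equation*}
where $(\cdot)^{\top}$ denotes entrywise transposition of the $\R^{d\times d}$-valued sum. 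Running the same two computations on a sub-interval $[t^n_k,t^n_\ell]$ -- that is, applied to $Z_j:=X_{t^n_k,t^n_j}$ for $k\le j\le\ell$ -- identifies the symmetric and antisymmetric parts of the defect in Property~\textup{(RIE)}~(ii): setting $A^n_{k,\ell}:=(\int_0^\cdot X_s\otimes\d^{\pi^n}X_s)_{t^n_k,t^n_\ell}-X_{t^n_k}\otimes X_{t^n_k,t^n_\ell}=\sum_{j=k}^{\ell-1}X_{t^n_k,t^n_j}\otimes X_{t^n_j,t^n_{j+1}}$, one obtains
\begin{equation*}
  A^n_{k,\ell}+(A^n_{k,\ell})^{\top}=X_{t^n_k,t^n_\ell}\otimes X_{t^n_k,t^n_\ell}-[X]^{\pi^n}_{t^n_k,t^n_\ell},\qquad
  A^n_{k,\ell}-(A^n_{k,\ell})^{\top}=\cL^{\pi^n}_{t^n_k,t^n_\ell}(X)-(X_{t^n_k}+X_{t^n_\ell})\otimes X_{t^n_k,t^n_\ell},
\end{equation*}
the right-hand side of the second identity being exactly the expression whose $\frac{p}{2}$-th power occurs in Assumption~\ref{ass: regularity}.

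For the equivalence concerning condition~(i), I would argue as follows. If $X$ possesses both quadratic variation and L\'evy area along $\pi$, then $[X]^{\pi^n}$ and $\cL^{\pi^n}_\cdot(X)$ both converge uniformly, and the first identity rewritten as $\int_0^\cdot X_s\otimes\d^{\pi^n}X_s=\frac{1}{2}(\cL^{\pi^n}_\cdot(X)-[X]^{\pi^n}_\cdot)$ gives exactly the uniform convergence required in Property~\textup{(RIE)}~(i). Conversely, if Property~\textup{(RIE)}~(i) holds, then, since $t\mapsto X_t\otimes X_t$ is a fixed continuous function, the integration-by-parts identity shows $[X]^{\pi^n}=(X_\cdot\otimes X_\cdot-X_0\otimes X_0)-\int_0^\cdot X_s\otimes\d^{\pi^n}X_s-(\int_0^\cdot X_s\otimes\d^{\pi^n}X_s)^{\top}$ converges uniformly, i.e.\ $X$ possesses quadratic variation along $\pi$ (one may alternatively invoke \cite[Lemma~4.17]{Perkowski2016}, as already done in the proof of Lemma~\ref{lem: RIE eq to gamma RIE}), and then the first identity $\cL^{\pi^n}_\cdot(X)=2\int_0^\cdot X_s\otimes\d^{\pi^n}X_s+[X]^{\pi^n}_\cdot$ forces $\cL^{\pi^n}_\cdot(X)$ to converge uniformly, i.e.\ $X$ possesses L\'evy area along $\pi$.

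For the equivalence concerning condition~(ii), the term $\sup_{(s,t)\in\Delta_T}|X_{s,t}|^p/c(s,t)$ is common to both conditions, so only the $\frac{p}{2}$-variation terms must be compared, via the two sub-interval identities. The implication Property~\textup{(RIE)}~(ii)$\Rightarrow$Assumption~\ref{ass: regularity} is immediate: the Assumption~\ref{ass: regularity} defect equals $A^n_{k,\ell}-(A^n_{k,\ell})^{\top}$, so $|A^n_{k,\ell}-(A^n_{k,\ell})^{\top}|^{p/2}\lesssim|A^n_{k,\ell}|^{p/2}$ with the same control $c$. For the converse, adding the two sub-interval identities gives $2A^n_{k,\ell}=(X_{t^n_k,t^n_\ell}^{\otimes2}-[X]^{\pi^n}_{t^n_k,t^n_\ell})+(\cL^{\pi^n}_{t^n_k,t^n_\ell}(X)-(X_{t^n_k}+X_{t^n_\ell})\otimes X_{t^n_k,t^n_\ell})$, whence
\begin{equation*}
  |A^n_{k,\ell}|^{p/2}\lesssim|X_{t^n_k,t^n_\ell}|^{p}+|[X]^{\pi^n}_{t^n_k,t^n_\ell}|^{p/2}+|\cL^{\pi^n}_{t^n_k,t^n_\ell}(X)-(X_{t^n_k}+X_{t^n_\ell})\otimes X_{t^n_k,t^n_\ell}|^{p/2},
\end{equation*}
and the first and third summands are $\lesssim c(t^n_k,t^n_\ell)$ by Assumption~\ref{ass: regularity}.

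The remaining summand, $|[X]^{\pi^n}_{t^n_k,t^n_\ell}|^{p/2}$, is the step I expect to be the main obstacle: one must dominate the approximating quadratic variations by a control function uniformly in $n$, $k$, $\ell$ -- equivalently, show $([X]^{\pi^n})_{n\in\N}$ has uniformly bounded $\frac{p}{2}$-variation -- which cannot be read off from Assumption~\ref{ass: regularity} alone, since the L\'evy-area defect it controls is precisely the antisymmetric part of $A^n_{k,\ell}$ and is orthogonal to the (symmetric) quadratic variation. This is where the equivalence for condition~(i) enters: once $X$ is known to possess quadratic variation along $\pi$ (which is the situation in which the two conditions of Property~\textup{(RIE)} are invoked together), \cite[Lemma~4.17]{Perkowski2016} furnishes a control $c_Q$ with $|[X]^{\pi^n}_{t^n_k,t^n_\ell}|\le c_Q(t^n_k,t^n_\ell)$ for all $n$; since $\frac{p}{2}>1$ one then has $|[X]^{\pi^n}_{t^n_k,t^n_\ell}|^{p/2}\le c_Q(0,T)^{p/2-1}c_Q(t^n_k,t^n_\ell)$, again a control. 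Replacing $c$ by the sum of $c$ and this control exhibits a single control function witnessing Property~\textup{(RIE)}~(ii), which completes the argument.
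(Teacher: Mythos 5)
Your proposal is correct and follows essentially the same route as the paper: both hinge on the algebraic identity $\cL^{\pi^n}_t(X)=2\int_0^t X_s\otimes\d^{\pi^n}X_s+[X]^{\pi^n}_t$ and its sub-interval version, and both invoke \cite[Lemma~4.17]{Perkowski2016} to control the approximating quadratic variations. Where you depart from the paper -- and improve on it -- is in making the symmetric/antisymmetric decomposition explicit: by identifying the Assumption~\ref{ass: regularity} defect with exactly $A^n_{k,\ell}-(A^n_{k,\ell})^{\top}$ (where $A^n_{k,\ell}$ is the Property~\textup{(RIE)}~(ii) defect), the implication Property~\textup{(RIE)}~(ii)$\Rightarrow$Assumption~\ref{ass: regularity} becomes an immediate pointwise estimate $|A-A^{\top}|\lesssim|A|$ with no need to control $[X]^{\pi^n}$ at all, whereas the paper's inequality~\eqref{eq: bound 1 RIE equivalent to Levy area and quadratic variation} carries that extra term in both directions. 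You also rightly flag the one place where the two halves of the lemma are not logically independent: the converse for condition~(ii) does require the uniform $\frac{p}{2}$-variation bound on $[X]^{\pi^n}$ and hence the quadratic variation hypothesis from condition~(i), a dependency the paper's proof also uses but does not foreground. The only imprecision is your parenthetical claim that \cite[Lemma~4.17]{Perkowski2016} hands you a control $c_Q$ with $|[X]^{\pi^n}_{t^n_k,t^n_\ell}|\le c_Q(t^n_k,t^n_\ell)$: what is actually available at that stage of the converse argument is the uniform bound $\sup_n\sum_j|X_{t^n_j,t^n_{j+1}}|^2<\infty$ (from uniform convergence of $[X]^{\pi^n}_T$) together with $|X_{s,t}|^p\lesssim c(s,t)$, and one should note that an application of H\"older's inequality with exponents $p/2$ and $p/(p-2)$ then yields $|[X]^{\pi^n}_{t^n_k,t^n_\ell}|^{p/2}\lesssim c(t^n_k,t^n_\ell)$ directly (a raw $1$-variation bound on $[X]^{\pi^n}$ need not be superadditive uniformly in $n$); with that small repair, the argument is complete and matches the paper's in substance.
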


\begin{proof}
  First, note that
  \begin{equation}\label{eq: Levy area in Riemann integral plus quadratic variation}
    \begin{split}
    &\cL^{\pi^n}_t(X) = \sum_{k=0}^{N_n-1} (X_{t^n_k \wedge t} + X_{t^n_{k+1} \wedge t}) \otimes X_{t^n_k \wedge t, t^n_{k+1} \wedge t} \\
    &\quad = 2 \sum_{k=0}^{N_n-1} X_{t^n_k \wedge t} \otimes X_{t^n_k \wedge t, t^n_{k+1} \wedge t} + \sum_{k=0}^{N_n-1} X_{t^n_k \wedge t, t^n_{k+1} \wedge t} \otimes X_{t^n_k \wedge t, t^n_{k+1} \wedge t} \\
    &\quad =  2 \int_0^t X_s \otimes \d^{\pi^n} X_s + 2 \sum_{k=0}^{N_n-1} X_{t^n_k, t^n_k \wedge t} \otimes X_{t^n_k \wedge t, t^n_{k+1} \wedge t} + [X]^{\pi^n}_t
    \end{split}
  \end{equation}
  for $t \in [0,T]$, where we recall $[X]^{\pi^n}:= \sum_{k=1}^{N_n-1} X_{t^n_k \wedge \cdot, t^n_{k+1} \wedge \cdot} \otimes X_{t^n_k \wedge \cdot, t^n_{k+1} \wedge \cdot}$. Secondly, note that, for any control function $c$, we have
  \begin{equation}\label{eq: bound 1 RIE equivalent to Levy area and quadratic variation}
    \begin{split}
    &\sup_{0 \le k < \ell \le N_n} \frac{|\cL^{\pi^n}_{t^n_k,t^n_\ell}(X) - (X_{t^n_k} + X_{t^n_\ell}) \otimes X_{t^n_k,t^n_\ell}|^{\frac{p}{2}}}{c(t^n_k,t^n_\ell)}\\
    &\quad \lesssim \sup_{0 \le k < \ell \le N_n} \frac{|(\int_0^{\cdot} X_s \otimes \d^{\pi^n} X_s)_{t^n_k,t^n_\ell} - X_{t^n_k} \otimes X_{t^n_k,t^n_\ell}|^{\frac{p}{2}}}{c(t^n_k,t^n_\ell)}\\
    &\qquad + \sup_{0 \le k < \ell \le N_n}  \frac{|X_{t^n_k,t^n_\ell} \otimes X_{t^n_k,t^n_\ell}|^{\frac{p}{2}}}{c(t^n_k,t^n_\ell)} + \sup_{0 \le k < \ell \le N_n}  \frac{| [X]^{\pi^n}_{t^n_k,t^n_\ell} |^{\frac{p}{2}}}{c(t^n_k,t^n_\ell)}
    \end{split}
  \end{equation}
  and
  \begin{equation}\label{eq: bound 2 RIE equivalent to Levy area and quadratic variation}
    \begin{split}
    &\sup_{0 \le k < \ell \le N_n} \frac{|(\int_0^{\cdot} X_s \otimes \d^{\pi^n} X_s)_{t^n_k,t^n_\ell} - X_{t^n_k} \otimes X_{t^n_k,t^n_\ell}|^{\frac{p}{2}}}{c(t^n_k,t^n_\ell)}\\
    &\quad \lesssim \sup_{0 \le k < \ell \le N_n} \frac{|\cL^{\pi^n}_{t^n_k,t^n_\ell}(X) - (X_{t^n_k} + X_{t^n_\ell}) \otimes X_{t^n_k,t^n_\ell}|^{\frac{p}{2}}}{c(t^n_k,t^n_\ell)} \\
    &\qquad + \sup_{0 \le k < \ell \le N_n}  \frac{|X_{t^n_k,t^n_\ell} \otimes X_{t^n_k,t^n_\ell}|^{\frac{p}{2}}}{c(t^n_k,t^n_\ell)}
    + \sup_{0 \le k < \ell \le N_n}  \frac{| [X]^{\pi^n}_{t^n_k,t^n_\ell} |^{\frac{p}{2}}}{c(t^n_k,t^n_\ell)}.
    \end{split}
  \end{equation}
  If $X$ satisfies Property \textup{(RIE)} (i), $(\int_0^\cdot X_s \otimes \d^{\pi^n} X_s)_{n \in \N}$ converges uniformly to $\int_0^\cdot X_s \otimes \d^\pi X_s$ and, by~\cite[Lemma~4.17]{Perkowski2016}, $X$ possesses quadratic variation, that is, $([X]^{\pi^n})_{n \in \N}$ converges uniformly to $[X]^\pi$ as $n \to \infty$. And, again due to~\cite[Lemma~4.17]{Perkowski2016}, $([X]^{\pi^n})_{n \in \N}$ has uniformly bounded $1$-variation. Hence, by~\eqref{eq: Levy area in Riemann integral plus quadratic variation} and~\eqref{eq: bound 1 RIE equivalent to Levy area and quadratic variation}, if $X$ satisfies Property \textup{(RIE)} (i), then $X$ possesses quadratic variation and L{\'e}vy area, both along~$\pi$, and if $X$ satisfies Property \textup{(RIE)} (ii), then $X$ satisfies Assumption~\ref{ass: regularity}.

  Conversely, if $X$ possesses quadratic variation, $([X]^{\pi^n})_{n \in \N}$ converges uniformly to $[X]^\pi$ as $n \to \infty$, and as in the proof of~\cite[Lemma~4.17]{Perkowski2016}, one can show that $([X]^{\pi^n})_{n \in \N}$ has uniformly bounded $1$-variation. If $X$ possesses L{\'e}vy area, $(\cL^{\pi^n}(X))_{n \in \N}$ converges uniformly to $\cL^\pi(X)$ as $n \to \infty$. Hence, by~\eqref{eq: Levy area in Riemann integral plus quadratic variation} and~\eqref{eq: bound 2 RIE equivalent to Levy area and quadratic variation}, if $X$ possesses quadratic variation and L{\'e}vy area, then $X$ satisfies Property \textup{(RIE)} (i), and if $X$ satisfies Assumption~\ref{ass: regularity}, then $X$ satisfies Property \textup{(RIE)} (ii).
\end{proof}

\begin{corollary}\label{cor: gamma RIE eq to Levy and quad var}
  Let $\gamma \in [0,1]$, $\gamma \neq \frac{1}{2}$, $p \in (2,3)$ and let $X$ and $\pi = (\pi^n)_{n\in \N}$ be as in Assumption~\ref{ass: partitions}. Then, $X$ satisfies Property $\gamma$-\textup{(RIE)} (i) relative to $\gamma$ and $\pi$ if and only if $X$ possesses quadratic variation and L{\'e}vy area, both along $\pi$, and $X$ satisfies Property $\gamma$-\textup{(RIE)} (ii) relative to $\gamma$, $p$ and $\pi$ if and only if $X$ satisfies Assumption~\ref{ass: regularity} relative to $p$ and $\pi$.
\end{corollary}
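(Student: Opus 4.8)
The plan is to obtain Corollary~\ref{cor: gamma RIE eq to Levy and quad var} by simply chaining the two equivalences already established. Concretely, for $\gamma\neq\frac12$, Lemma~\ref{lem: RIE eq to gamma RIE}(i) states that Property $\gamma$-\textup{(RIE)} (i) relative to $\gamma$ and $\pi$ holds if and only if Property \textup{(RIE)} (i) relative to $\pi$ holds, and likewise that Property $\gamma$-\textup{(RIE)} (ii) relative to $\gamma$, $p$, $\pi$ holds if and only if Property \textup{(RIE)} (ii) relative to $p$, $\pi$ holds. On the other hand, Lemma~\ref{lem: RIE eq to Levy and quad var} states that Property \textup{(RIE)} (i) relative to $\pi$ is equivalent to $X$ possessing both quadratic variation and L{\'e}vy area along $\pi$, and Property \textup{(RIE)} (ii) relative to $p$, $\pi$ is equivalent to $X$ satisfying Assumption~\ref{ass: regularity} relative to $p$, $\pi$. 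Composing these two biconditionals gives exactly the claimed statement.

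So the proof I would write is essentially one sentence invoking the two results. The only point requiring a moment's care is that Lemma~\ref{lem: RIE eq to gamma RIE}(i) is a genuine two-sided equivalence precisely because $\gamma\neq\frac12$ — for $\gamma=\frac12$ the identity $[X]^{\pi^n}=\frac1{1-2\gamma}[X]^{\gamma,\pi^n}$ used in the converse direction degenerates — and the hypothesis $\gamma\neq\frac12$ of the corollary is exactly what licenses this. Beyond that, there is no obstacle: both intermediate results are stated for $X$ and $\pi$ as in Assumption~\ref{ass: partitions}, which matches the corollary's hypotheses, and the parameter dependencies ($\gamma$, $p$, $\pi$) line up term by term. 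Hence I expect the proof to be a direct corollary with no hidden work; the "hard part", such as it is, was already absorbed into Lemmas~\ref{lem: RIE eq to gamma RIE} and~\ref{lem: RIE eq to Levy and quad var}.

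\begin{proof}
  This follows immediately by combining Lemma~\ref{lem: RIE eq to gamma RIE}(i) with Lemma~\ref{lem: RIE eq to Levy and quad var}. Indeed, since $\gamma \neq \frac{1}{2}$, Lemma~\ref{lem: RIE eq to gamma RIE}(i) gives that $X$ satisfies Property $\gamma$-\textup{(RIE)} (i) relative to $\gamma$ and $\pi$ if and only if $X$ satisfies Property \textup{(RIE)} (i) relative to $\pi$, which, by Lemma~\ref{lem: RIE eq to Levy and quad var}, holds if and only if $X$ possesses quadratic variation and L{\'e}vy area, both along $\pi$. Similarly, Lemma~\ref{lem: RIE eq to gamma RIE}(i) gives that $X$ satisfies Property $\gamma$-\textup{(RIE)} (ii) relative to $\gamma$, $p$ and $\pi$ if and only if $X$ satisfies Property \textup{(RIE)} (ii) relative to $p$ and $\pi$, which, again by Lemma~\ref{lem: RIE eq to Levy and quad var}, holds if and only if $X$ satisfies Assumption~\ref{ass: regularity} relative to $p$ and $\pi$.
\end{proof}
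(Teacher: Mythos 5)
Your proof is correct and follows exactly the paper's approach: the paper's proof is the single sentence ``The corollary follows by combining Lemma~\ref{lem: RIE eq to gamma RIE} and~\ref{lem: RIE eq to Levy and quad var}.'' Your version simply spells out the chain of equivalences and correctly identifies that $\gamma\neq\tfrac12$ is what licenses the two-sided direction of Lemma~\ref{lem: RIE eq to gamma RIE}(i).
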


\begin{proof}
  The corollary follows by combining Lemma~\ref{lem: RIE eq to gamma RIE} and~\ref{lem: RIE eq to Levy and quad var}.
\end{proof}

\begin{lemma}\label{lem: Levy eq to 1/2 RIE}
  Let $X$ and $\pi = (\pi^n)_{n\in \N}$ be as in Assumption~\ref{ass: partitions}. Then, $X$ satisfies Property $\gamma$-\textup{(RIE)} (i) relative to $\gamma = \frac{1}{2}$ and $\pi$ if and only if $X$ possesses L{\'e}vy area along $\pi$, and $X$ satisfies Property $\gamma$-\textup{(RIE)} (ii) relative to $\gamma = \frac{1}{2}$, $p$ and $\pi$ if and only if $X$ satisfies Assumption~\ref{ass: regularity} relative to $p$ and~$\pi$.
\end{lemma}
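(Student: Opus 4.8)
The plan is to follow the pattern of the proof of Lemma~\ref{lem: RIE eq to Levy and quad var}, specialised to $\gamma=\tfrac12$; the decisive feature of this case is that the approximate quadratic variation $[X]^{\pi^n}$ drops out of the identities, so that the averaged Riemann sums $\int_0^\cdot X_s\otimes\d^{1/2,\pi^n}X_s$ reduce, up to a negligible boundary correction, to $\tfrac12\cL^{\pi^n}(X)$.

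\emph{Step 1: the key identity.} I would take the identity~\eqref{eq:gamma integral in Riemann integral} with $\gamma=\tfrac12$ and substitute into it the expression for $\int_0^\cdot X_s\otimes\d^{\pi^n}X_s$ obtained by rearranging~\eqref{eq: Levy area in Riemann integral plus quadratic variation}, namely $2\int_0^t X_s\otimes\d^{\pi^n}X_s=\cL^{\pi^n}_t(X)-[X]^{\pi^n}_t$ (the remaining sum in~\eqref{eq: Levy area in Riemann integral plus quadratic variation} being identically zero). The contributions $+\tfrac12[X]^{\pi^n}_t$ and $-\tfrac12[X]^{\pi^n}_t$ then cancel, and what remains is
\[
  \int_0^t X_s\otimes\d^{1/2,\pi^n}X_s=\tfrac12\,\cL^{\pi^n}_t(X)+E_n(t),\qquad t\in[0,T],
\]
where $E_n(t):=\tfrac12\sum_{k=0}^{N_n-1}\bigl(X_{t^n_{k+1}\wedge t,\,t^n_{k+1}}-X_{t^n_k\wedge t,\,t^n_k}\bigr)\otimes X_{t^n_k\wedge t,\,t^n_{k+1}\wedge t}$. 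The sum defining $E_n(t)$ collapses to the single term supported on the cell of $\pi^n$ containing $t$; in particular $E_n$ vanishes at every point of $\pi^n$, and by Assumption~\ref{ass: partitions} one gets $\sup_{t\in[0,T]}|E_n(t)|\to0$ as $n\to\infty$, exactly as the analogous correction terms are discarded in the proofs of Lemmas~\ref{lem: RIE eq to gamma RIE} and~\ref{lem: RIE eq to Levy and quad var}.

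\emph{Step 2: the two equivalences.} For part (i): since $\sup_t|E_n(t)|\to0$, the sequence $(\int_0^\cdot X_s\otimes\d^{1/2,\pi^n}X_s)_{n}$ converges uniformly on $[0,T]$ if and only if $(\cL^{\pi^n}(X))_{n}$ does, and this is precisely the equivalence between Property~$\gamma$-\textup{(RIE)} (i) relative to $\gamma=\tfrac12$ and $\pi$ and the existence of L\'evy area along $\pi$; moreover $\int_0^\cdot X_s\otimes\d^{1/2,\pi}X_s=\tfrac12\cL^\pi(X)$ in that case. For part (ii): evaluating the identity of Step~1 at partition points $t^n_k<t^n_\ell$ (where $E_n$ is absent) and using $X_{t^n_k}+\tfrac12X_{t^n_k,t^n_\ell}=\tfrac12(X_{t^n_k}+X_{t^n_\ell})$ gives
\[
  \Bigl(\int_0^\cdot X_s\otimes\d^{1/2,\pi^n}X_s\Bigr)_{t^n_k,t^n_\ell}-\bigl(X_{t^n_k}+\tfrac12X_{t^n_k,t^n_\ell}\bigr)\otimes X_{t^n_k,t^n_\ell}=\tfrac12\Bigl(\cL^{\pi^n}_{t^n_k,t^n_\ell}(X)-(X_{t^n_k}+X_{t^n_\ell})\otimes X_{t^n_k,t^n_\ell}\Bigr).
\]
Raising both sides to the power $\tfrac p2$, dividing by $c(t^n_k,t^n_\ell)$ and taking $\sup_{n}\sup_{0\le k<\ell\le N_n}$, the second summand in Property~$\gamma$-\textup{(RIE)} (ii) equals $2^{-p/2}$ times the second summand in Assumption~\ref{ass: regularity}; since the first summand $\sup_{(s,t)\in\Delta_T}|X_{s,t}|^p/c(s,t)$ is common to both, one bound holds with a given $p$ and control function $c$ if and only if the other does, which is the asserted equivalence.

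\emph{Main obstacle.} The only point requiring care is the uniform control of the boundary term $E_n$ in Step~1, which appears because the weight $X_{t^n_k}+\tfrac12X_{t^n_k,t^n_{k+1}}$ in the Riemann sum is not truncated at $t$; this is the same bookkeeping already done for the corresponding terms in Lemmas~\ref{lem: RIE eq to gamma RIE} and~\ref{lem: RIE eq to Levy and quad var}, and Assumption~\ref{ass: partitions} supplies exactly the decay needed. The remainder of the argument is elementary algebra together with the already-established identities~\eqref{eq:gamma integral in Riemann integral} and~\eqref{eq: Levy area in Riemann integral plus quadratic variation}.
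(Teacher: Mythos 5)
Your argument is correct and follows essentially the same route as the paper's own proof. The key identity you assemble in Step~1, namely $\int_0^t X_s\otimes\d^{1/2,\pi^n}X_s=\tfrac12\cL^{\pi^n}_t(X)+E_n(t)$, is exactly~\eqref{eq: Levy area in gamma integral} after simplification; the paper derives it directly, whereas you reconstruct it from~\eqref{eq:gamma integral in Riemann integral} and~\eqref{eq: Levy area in Riemann integral plus quadratic variation} and note the cancellation of the $[X]^{\pi^n}$ contributions. This is a pleasant structural observation but not a different mechanism.

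Two small comments. First, your treatment of part (ii) is actually a touch sharper than the paper's: evaluating the identity at partition points turns it into an exact equality with the factor $2^{-p/2}$, whereas the paper instead records the two-sided inequalities~\eqref{eq: bound 1 Levy area} and~\eqref{eq: bound 2 Levy area}, each carrying a superfluous term $\sup |X_{t^n_k,t^n_\ell}\otimes X_{t^n_k,t^n_\ell}|^{p/2}/c(t^n_k,t^n_\ell)$. Second, your assertion that $\sup_{t}|E_n(t)|\to0$ follows from Assumption~\ref{ass: partitions} alone is a little quick: writing $E_n(t)=\tfrac12 X_{t,t^n_{k+1}}\otimes X_{t^n_k,t}$ for $t\in(t^n_k,t^n_{k+1})$, the factors are increments from a partition point to an interior point, and $\sup_k|X_{t^n_k,t^n_{k+1}}|\to0$ does not by itself bound the oscillation of $X$ inside a cell. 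One should appeal either to a vanishing-mesh hypothesis (so uniform continuity of $X$ applies) or to the $p$-variation control $|X_{s,t}|^p\lesssim c(s,t)$ from Property~$\gamma$-\textup{(RIE)}~(ii). The paper's own proof of Lemma~\ref{lem: Levy eq to 1/2 RIE} passes over the very same boundary term without comment, so this is not a defect of your argument relative to the paper's, but it is worth spelling out if one wants a fully airtight treatment.
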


\begin{proof}
  We note that
  \begin{equation}\label{eq: Levy area in gamma integral}
    \begin{split}
    &\cL^{\pi^n}_t(X) := \sum_{k=0}^{N_n-1} (X_{t^n_k \wedge t} + X_{t^n_{k+1} \wedge t}) X_{t^n_k \wedge t, t^n_{k+1} \wedge t} \\
    &\quad = 2 \int_0^t X_s \otimes \d^{\frac{1}{2},\pi^n} X_s + 2 \sum_{k=0}^{N_n-1} (X_{t^n_k, t^n_k \wedge t} + \frac{1}{2} (X_{t^n_{k+1}, t^n_{k+1} \wedge t} - X_{t^n_k,t^n_k \wedge t})) X_{t^n_k \wedge t, t^n_{k+1} \wedge t}
    \end{split}
  \end{equation}
  for $t \in [0,T]$. And therefore, for any control function $c$, we have that
  \begin{equation}\label{eq: bound 1 Levy area}
    \begin{split}
    &\sup_{0 \le k < \ell \le N_n} \frac{|(\int_0^\cdot X_s \otimes \d^{\frac{1}{2},\pi^n} X_s)_{t^n_k,t^n_\ell} - X_{t^n_k} \otimes X_{t^n_k,t^n_\ell}|^{\frac{p}{2}}}{c(t^n_k,t^n_\ell)} \\
    &\quad \leq \sup_{0 \le k < \ell \le N_n} \frac{|\cL^{\pi^n}_{t^n_k,t^n_\ell}(X) - (X_{t^n_k} + X_{t^n_\ell}) \otimes X_{t^n_k,t^n_\ell}|^{\frac{p}{2}}}{c(t^n_k,t^n_\ell)} + \sup_{0 \le k < \ell \le N_n} \frac{|X_{t^n_k,t^n_\ell} \otimes X_{t^n_k,t^n_\ell}|^{\frac{p}{2}}}{c(t^n_k,t^n_\ell)}
    \end{split}
  \end{equation}
  and
  \begin{equation}\label{eq: bound 2 Levy area}
    \begin{split}
    &\sup_{0 \le k < \ell \le N_n} \frac{|\cL^{\pi^n}_{t^n_k,t^n_\ell}(X) - X_{t^n_k,t^n_\ell} \otimes X_{t^n_k,t^n_\ell}|^{\frac{p}{2}}}{c(t^n_k,t^n_\ell)} \\
    &\quad \lesssim \sup_{0 \le k < \ell \le N_n} \frac{|(\int_0^\cdot X_s \otimes \d^{\frac{1}{2},\pi^n} X_s)_{t^n_k,t^n_\ell} - X_{t^n_k} \otimes X_{t^n_k,t^n_\ell}|^{\frac{p}{2}}}{c(t^n_k,t^n_\ell)} + \sup_{0 \le k < \ell \le N_n} \frac{|X_{t^n_k,t^n_\ell} \otimes X_{t^n_k,t^n_\ell}|^{\frac{p}{2}}}{c(t^n_k,t^n_\ell)}.
    \end{split}
  \end{equation}
  If $X$ possesses L{\'e}vy area,  $(\cL^{\pi^n}(X))_{n \in \N}$ converges uniformly to $\cL^\pi(X)$, and if $X$ satisfies Property $\gamma$-\textup{(RIE)} (i) relative to $\gamma = \frac{1}{2}$, $(\int_0^{\cdot} X_s \otimes \d^{\frac{1}{2}, \pi^n} X_s)_{n\in \N}$ converges uniformly to $\int_0^{\cdot} X_s \otimes \d^{\frac{1}{2},\pi} X_s$. Hence, by~\eqref{eq: Levy area in gamma integral} and~\eqref{eq: bound 1 Levy area}, if $X$ possesses L{\'e}vy area, then $X$ satisfies Property $\gamma$-\textup{(RIE)} (i), and if $X$ satisfies Assumption~\ref{ass: regularity}, then $X$ satisfies Property $\gamma$-\textup{(RIE)} (ii) relative to $\gamma = \frac{1}{2}$. Conversely, by~\eqref{eq: Levy area in gamma integral} and~\eqref{eq: bound 2 Levy area}, if $X$ satisfies Property $\gamma$-\textup{(RIE)} (i) relative to $\gamma = \frac{1}{2}$, then $X$ possesses L{\'e}vy area, and if $X$ satisfies Property $\gamma$-\textup{(RIE)} (ii) relative to $\gamma = \frac{1}{2}$, then $X$ satisfies Assumption~\ref{ass: regularity}.
\end{proof}

\section{Invariance of quadratic variation, L{\'e}vy area, and pathwise integration}\label{sec: invariance}

It is well observed that the pathwise quadratic variation, pathwise L{\'e}vy area and, thus, the resulting pathwise integrals may depend on the chosen sequence of partitions, see e.g. \cite{Freedman1983,Davis2018,Friz2020}. For previous works on the invariance of quadratic variation we refer to \cite{contdas2023, cont-das2022}. In this section we derive conditions for the invariance of quadratic variation and L{\'e}vy area with respect to the choice of the sequence of partitions. To that end, we need to introduce some definitions.

\begin{definition}[Super-sequence and sub-sequence]
  Let $\pi = (\pi^n)_{n \in \N}$ be a sequence of partitions of the interval $[0,T]$. A sequence $(\sigma^n)_{n \in \N}$ of partitions is called a \emph{super-sequence} of $\pi = (\pi^n)_{n \in \N}$ if there exists a non-decreasing map $r \colon \N \to \N$ such that $\sigma^n = \pi^{r(n)}$, $n \geq r(n)$ for all $n \in \N$, and $r(n) \to \infty$ as $n \to \infty$. Analogously, a sequence $(\sigma^n)_{n \in \N}$ of partitions is called a \emph{sub-sequence} of $\pi = (\pi^n)_{n \in \N}$ if there exists a non-decreasing map $r \colon \N \to \N$ such that $\sigma^n = \pi^{r(n)}$ and $n \leq r(n)$ for all $n \in \N$.
\end{definition}

Note that the quadratic variation or L{\'e}vy area of any path $X\in C([0,T];\R^d)$ along a sequence $\pi$ of partitions is always the same along any sub-sequence or super-sequence $\sigma$ of $\pi$. To derive sufficient and necessary conditions on the invariance of quadratic variation or L{\'e}vy area, we rely again on the concept of a control function. We recall that a continuous path $X \in C([0,T];\R^d)$ has finite $p$-variation, i.e.~$\|X\|_p < \infty$, if and only if there exists a control function $c$ such that
\begin{equation*}
  \sup_{(u,v) \in \Delta_T} \frac{|X_v - X_u|^p}{c(u,v)} < \infty.
\end{equation*}

In this section, we write $N(\pi^n)$ instead of $N_n$ for the number of partition points in the partition $\pi^n$ to emphasize the dependence of the partition $\pi = (\pi^n)_{n \in \N}$.

\begin{definition}[$c$-balanced sequence of partitions]
  Let $\pi= (\pi^n)_{n \in N}$, with $\pi^n = \{0 = t^n_0 < t^n_1 < \dots < t^n_{N(\pi^n)} = T\}$, $n \in \N$, be a sequence of partitions of $[0,T]$, and $c \in C(\Delta_T;\R)$ be a control function. We say that $\pi$ is a {\em $c$-balanced sequence of partitions} if
  \begin{equation*}
    \exists C>0 \qquad \text{such that} \qquad \forall n \in \N, \quad \frac{\sup_{t^n_i \in \pi^n}  c(t^n_i, t^n_{i+1})}{\inf_{t^n_i \in \pi^n}  c(t^n_i, t^n_{i+1})} \leq C.
  \end{equation*}
\end{definition}

\begin{remark}
  Choosing the control $c(s,t) := |t-s|$, a $c$-balanced sequence $\pi$ of partitions is a balanced sequence of partitions in the sense of \cite[Definition~2.1]{contdas2023}. Indeed, we recall that a $\pi = (\pi^n)_{n \in \N}$ is called {\em balanced} if
  \begin{equation*}
    \exists C>0 \qquad \text{such that} \qquad \forall n \in \N, \quad \frac {|\pi^n|}{\underline{\pi^n}}\leq C,
  \end{equation*}
  where $\pi^n = \{0 = t^n_0 < t^n_1 < \dots < t^n_{N(\pi^n)} = T\}$, $n \in \N$, and
  \begin{equation*}
    \underline{\pi^n} := \inf_{i=0,\dots,N(\pi^n)-1} |t^n_{i+1}-t^n_i|,\qquad |\pi^n| := \sup_{i=0,\dots,N(\pi^n)-1} |t^n_{i+1}-t^n_i|.
  \end{equation*}
\end{remark}

\subsection{Invariance of quadratic variation}

We begin by generalizing the notion of \emph{quadratic roughness} \cite[Definition~3.1]{contdas2023} and the results concerning the invariance of quadratic variation \cite[Lemma~4.1 and Theorem~4.2]{contdas2023}, allowing for general control functions. Consequently, these extensions allow to treat continuous paths of finite $p$-variation, and not only the H{\"o}lder continuous paths. This then particularly includes sample paths of semimartingales almost surely, in contrast to~\cite{contdas2023}. 

To that end, let $\T = (\T^n)_{n \in \N}$ be the sequence of dyadic partitions, that is,
\begin{equation*}
  \T^n := \{ 0 < T2^{-n} < 2T2^{-n} < 3T2^{-n} < \cdots < T\}, \quad n \in \N.
\end{equation*}

\begin{definition}[Quadratic roughness (with respect to a control)]
  Let $p \in (2,3)$, $c$ be a control function, and $X \in C([0,T];\R^d)$ be a continuous path such that
  \begin{equation*}
    \sup_{(s,t) \in \Delta_T}\frac{|X_{s,t}|^p}{c(s,t)} \lesssim  1.
  \end{equation*}
  Moreover, suppose that $\T = (\T^n)_{n \in \N}$ is a $c$-balanced sequence of partitions and $\pi = (\pi^n)_{n \in \N}$, with $\pi^n = \{0 = t^n_0 < t^n_1 < \cdots < t^n_{N(d^n)} = T\}$, $n \in \N$, is a sequence of partitions with vanishing mesh size. We say that $X\in Q^\T([0,T];\R^d)$ has \emph{quadratic roughness} with index~$p$ along $\pi$ on $[0,T]$ if the following holds. There exists a sub-sequence or super-sequence $d^n= \{0=s^n_0<s^n_1<\cdots<s^n_{N(d^n)}=T\}$ of $\T$ with the following properties:
  \begin{enumerate}
    \item [(i)] $\frac{N(\pi^n)^p}{N(d^n)}\longrightarrow 0$ as $n \longrightarrow \infty$,
    \item [(ii)] for all $t\in [0,T]$,
      \begin{equation*}
        \sum_{k=0}^{N(\pi^n)-1} \sum_{s^n_i\neq s^n_{i'} \in (t^n_k,t^n_{k+1}]} X_{s^n_i \wedge t, s^n_{i+1} \wedge t} \otimes X_{s^n_{i'} \wedge t, s^n_{i'+1} \wedge t} \enspace \longrightarrow 0 \quad \text{as} \quad n \longrightarrow \infty.
      \end{equation*}
  \end{enumerate}
\end{definition}

Note that this definition of quadratic roughness does not require the partition $\pi$ to be balanced as in \cite{contdas2023}, but instead requires the `reference partition' $\T$ to be $c$-balanced with respect to a control function $c$.

\begin{proposition}[Invariance of quadratic variation]\label{prop: invariance of quad variation}
  Let $p \in (2,3)$, $c$ be a control function, and $X\in C([0,T];\R^d)$ be a continuous path such that
  \begin{equation*}
    \sup_{(s,t) \in \Delta_T} \frac{|X_{s,t}|^p}{c(s,t)} \lesssim  1.
  \end{equation*}
  Moreover, suppose that $\T = (\T^n)_{n \in \N}$ is a $c$-balanced sequence of partitions and $\pi =(\pi^n)_{n\in \N}$ is a sequence of partitions with vanishing mesh size. Then, $X$ has quadratic roughness with index $p$ along $\pi$ on $[0,T]$ if and only if
  \begin{equation*}
    X \in Q^\pi([0,T];\R^d) \quad \text{and}, \quad \forall  t \in [0,T], \quad [X]^\pi_t = [X]^\T_t.
  \end{equation*}
\end{proposition}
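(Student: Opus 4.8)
The plan is to deduce both implications from a single \emph{comparison identity} relating the quadratic sum of $X$ along $\pi$ to the quadratic sum along a sub- or super-sequence $d^n = \{0 = s^n_0 < \dots < s^n_{N(d^n)} = T\}$ of the dyadic sequence $\T$. For fixed $t\in[0,T]$, write each clipped $\pi$-increment $X_{t^n_k\wedge t,\,t^n_{k+1}\wedge t}$ as a telescoping sum of the increments $X_{s^n_i\wedge t,\,s^n_{i+1}\wedge t}$ over the $d^n$-points lying in $(t^n_k,t^n_{k+1}]$, up to the two sub-$d^n$-interval pieces at $t^n_k$ and $t^n_{k+1}$; expanding the tensor square and summing over $k$ produces an identity $[X]^{\pi^n}_t = [X]^{d^n}_t + C^n_t + E^n_t$, where $C^n_t = \sum_{k}\sum_{s^n_i \ne s^n_{i'} \in (t^n_k,t^n_{k+1}]} X_{s^n_i\wedge t,\,s^n_{i+1}\wedge t}\otimes X_{s^n_{i'}\wedge t,\,s^n_{i'+1}\wedge t}$ is exactly the double sum in the definition of quadratic roughness, and $E^n_t$ collects boundary corrections of three types: $O(N(\pi^n))$ tensor squares of sub-$d^n$-interval increments at the $\pi$-partition points (double counts and gaps), the tensor squares of those $\pi$-intervals that happen to lie inside a single $d^n$-interval, and $O(N(\pi^n))$ ``edge'' cross terms arising because the index ranges in $C^n$ and in the telescoping are shifted by one. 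Once we know $E^n_t\to 0$ uniformly in $t$, the two implications follow by passing to the limit in the identity.

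Establishing $E^n_t\to 0$ uniformly is where the balancedness of $\T$ and the $p$-variation control of $X$ enter, and is the technical heart. Since $d^n=\T^{r(n)}$ is $c$-balanced with the same constant as $\T$, and $c$ is superadditive with $c(0,T)<\infty$, we get $c(s^n_i,s^n_{i+1})\lesssim c(0,T)/N(d^n)$ for every $i$, hence $|X_{s^n_i,s^n_{i+1}}|\lesssim (c(0,T)/N(d^n))^{1/p}$ and in particular $\max_i c(s^n_i,s^n_{i+1})\to 0$. The $O(N(\pi^n))$ sub-$d^n$-interval squares at the $\pi$-points are each $\lesssim (c(0,T)/N(d^n))^{2/p}$, so their total is $\lesssim N(\pi^n)/N(d^n)^{2/p}\le N(\pi^n)/N(d^n)^{1/p}\to 0$ by condition (i) of quadratic roughness. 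For the remaining terms a term-by-term estimate would only give a bound of order $N(d^n)^{1-2/p}\to\infty$, so one must first \emph{telescope}: by bilinearity of $\otimes$, an inner sum $\sum_{i'} v\otimes X_{s^n_{i'},s^n_{i'+1}}$ collapses to $v\otimes X_{s^n_{j_0},s^n_{j_1}}$ with $X_{s^n_{j_0},s^n_{j_1}}$ of the size of a $\pi$-increment, leaving $O(N(\pi^n))$ terms bounded by $(c(0,T)/N(d^n))^{1/p}\,c(t^n_k,t^n_{k+1})^{1/p}$. Summing over $k$ with the power-mean inequality $\sum_k c(t^n_k,t^n_{k+1})^{1/p}\le N(\pi^n)^{1-1/p}\,c(0,T)^{1/p}$ (concavity of $x\mapsto x^{1/p}$ and superadditivity of $c$) gives a bound $\lesssim c(0,T)^{2/p}\,N(\pi^n)^{1-1/p}/N(d^n)^{1/p}$, which tends to $0$ by condition (i) since $N(\pi^n)^{p-1}\le N(\pi^n)^p$. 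All of these bounds are uniform in $t$, since replacing $s^n_i,t^n_k$ by $s^n_i\wedge t,\,t^n_k\wedge t$ only decreases the relevant $c$-masses.

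For the implication ``$\Leftarrow$'', assume $X\in Q^\pi([0,T];\R^d)$ and $[X]^\pi_t=[X]^\T_t$ for all $t$ (in particular $X\in Q^\T$, so $[X]^\T$ makes sense). Since $\pi$ has vanishing mesh, $N(\pi^n)\to\infty$, and we may pick a sub-sequence $d^n=\T^{r(n)}$ of $\T$ with $r$ non-decreasing, $r(n)\ge n$, chosen so large that $N(d^n)=2^{r(n)}\ge (N(\pi^n)+1)^{p+1}$; then $N(\pi^n)^p/N(d^n)\to 0$, which is property (i). Passing to the limit in the identity and using $[X]^{\pi^n}_t\to[X]^\pi_t$, $[X]^{d^n}_t\to[X]^\T_t$ (as $d^n$ is a sub-sequence of $\T$), $E^n_t\to 0$, and $[X]^\pi_t=[X]^\T_t$, we obtain $C^n_t\to 0$ for every $t$, which is property (ii); hence $X$ has quadratic roughness with index $p$ along $\pi$. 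For ``$\Rightarrow$'', take the sub-/super-sequence $d^n$ furnished by quadratic roughness; the identity together with $E^n_t\to 0$, $[X]^{d^n}_t\to[X]^\T_t$ and $C^n_t\to 0$ (property (ii)) gives $[X]^{\pi^n}_t\to[X]^\T_t$ for every $t$. To upgrade this to the uniform convergence required for $X\in Q^\pi$: each diagonal entry $t\mapsto([X]^{\pi^n})^{ii}_t$ is non-decreasing and converges pointwise to the continuous function $t\mapsto([X]^\T)^{ii}_t$, hence converges uniformly by P\'olya's theorem, and each off-diagonal entry equals $\tfrac14\bigl([X^i+X^j]^{\pi^n}-[X^i-X^j]^{\pi^n}\bigr)$, a difference of two such non-decreasing entries, so it too converges uniformly; therefore $[X]^{\pi^n}\to[X]^\T$ uniformly and $[X]^\pi=[X]^\T$.

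I expect the main obstacle to be the comparison identity and, within it, the uniform control of $E^n_t$: the edge cross terms and the squares of $\pi$-intervals contained in a single $d^n$-interval, estimated naively, diverge like $N(d^n)^{1-2/p}$, so the bilinear telescoping that collapses each inner sum into a single $\pi$-sized increment — followed by the power-mean estimate and condition (i) — is essential, and this is exactly where the exponent $p$ in $N(\pi^n)^p/N(d^n)\to 0$ is used (the weaker $N(\pi^n)^{p-1}/N(d^n)\to0$ already suffices). The remaining work — the precise bookkeeping of which $d^n$-intervals get double-counted or missed near the $\pi$-partition points, and keeping every estimate uniform in $t$ under the clippings $(\cdot)\wedge t$ — is routine but delicate, while the limit passages and the pointwise-to-uniform upgrade are standard.
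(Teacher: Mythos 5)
Your overall strategy --- the comparison identity $[X]^{\pi^n}_t = [X]^{d^n}_t + C^n_t + E^n_t$, where $C^n_t$ is the quadratic-roughness cross sum and $E^n_t$ a boundary error controlled via $c$-balancedness and the $p$-variation control --- is exactly the decomposition that the paper's reference \cite{contdas2023} uses, and it mirrors structurally the paper's own proof of the analogous Theorem~\ref{thm:levy roughness} for the L\'evy area (decompose each $\pi$-increment into boundary pieces plus a $d^n$-telescoping sum; expand; bound the boundary cross terms and squares). Your error estimates are correct: the $N(\pi^n)/N(d^n)^{2/p}$ bound on squared sub-$d^n$ pieces, the bilinear telescoping that collapses inner sums before summing over $k$, and the H\"older/power-mean step $\sum_k c(t^n_k,t^n_{k+1})^{1/p}\le N(\pi^n)^{1-1/p}c(0,T)^{1/p}$ yielding a bound of order $N(\pi^n)^{1-1/p}/N(d^n)^{1/p}$, which vanishes under condition~(i). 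The construction of $d^n$ in the converse direction and the reversal of the argument are both in the same spirit as the paper's Part~2 for the L\'evy area.

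There is, however, one genuine gap, in the pointwise-to-uniform upgrade of the ``$\Rightarrow$'' direction: you assert that each diagonal entry $t\mapsto([X]^{\pi^n})^{ii}_t=\sum_k\bigl(X^i_{t^n_k\wedge t,\,t^n_{k+1}\wedge t}\bigr)^2$ is non-decreasing, and apply P\'olya's theorem to it directly. This monotonicity claim is false. For $t^n_m\le s<t\le t^n_{m+1}$ the increment equals $(X^i_t-X^i_{t^n_m})^2-(X^i_s-X^i_{t^n_m})^2$, which is negative whenever $|X^i_t-X^i_{t^n_m}|<|X^i_s-X^i_{t^n_m}|$ --- for a typical path of finite $p$-variation with $p>2$ this happens on a large set, so the clipped quadratic sum oscillates within each $\pi^n$-cell and P\'olya's theorem does not apply to it as stated. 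The repair is standard but must be made explicit: replace $[X]^{\pi^n}_t$ by the F\"ollmer-style left-point step function $\widetilde{[X]}^{\pi^n}_t:=\sum_{k:\,t^n_{k+1}\le t}X_{t^n_k,t^n_{k+1}}^{\otimes 2}$, which \emph{is} entrywise non-decreasing (each summand is PSD). Because $\pi$ has vanishing mesh and $X$ is uniformly continuous on $[0,T]$, one has $\sup_{t}\bigl|[X]^{\pi^n}_t-\widetilde{[X]}^{\pi^n}_t\bigr|\le\omega_X(|\pi^n|)^2\to 0$ (only the single cell containing $t$ contributes), so the pointwise limits coincide; P\'olya's theorem then gives uniform convergence of $\widetilde{[X]}^{\pi^n}$ to the continuous limit $[X]^\T$, and the uniform convergence transfers back to $[X]^{\pi^n}$. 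Your polarization argument for the off-diagonal entries then carries over verbatim once applied to $\widetilde{[X^i\pm X^j]}^{\pi^n}$. With that one sentence inserted, the proof is complete and matches the intended argument.
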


We will omit the proof of Proposition~\ref{prop: invariance of quad variation}, as the proof follows a very similar line of argument of that in \cite[Lemma~4.1 and Theorem~4.2]{contdas2023}.

\subsection{Invariance of L{\'e}vy area}
If a process $X$ has L{\'e}vy area along a partition sequence $\pi$ with vanishing mesh size, then the process $X$ also has L{\'e}vy area along any subsequence $(\pi^{l_n})_{n \in \N}$ of $(\pi^n)_{n \in \N}$. This simply means that $\mathcal \sup_{t\in [0,T]}|L^{\pi^{n}}_t(X)- L^{\pi^{l_n}}_t(X)| \to 0$ as $n \to \infty$. If one chooses the subsequence $l_n$ such that $l_n/n$ diverges to $\infty$ fast enough, the above condition can be rewritten as follows:
\begin{equation}\label{eq.levycond.motivation}
  \begin{split}
  &\sum_{k=0}^{N(\pi^n)-1}\sum_{t^{l_n}_i, t^{l_n}_{i'}} (-1)^{i-\inf_l\{t^{l_n}_l>t^n_k\}} (X_{t^{l_n}_{i+1} \wedge t}+X_{t^{l_n}_i \wedge t})\otimes X_{t^{l_n}_{i'} \wedge t, t^{l_n}_{i'+1} \wedge t}\\
  &\quad \quad \quad- \sum_{k=0}^{N(\pi^n)-1}\sum_{t^{l_n}_i} (X_{t^{l_n}_{i+1}\wedge t}+X_{t^{l_n}_{i}\wedge t})\otimes X_{t^{l_n}_{i}\wedge t,t^{l_n}_{i+1}\wedge t} \enspace \longrightarrow 0 \quad \text{as} \quad n \longrightarrow \infty,
  \end{split}
\end{equation}
where the inner two sums are taken such that $ t^{l_n}_i,t^{l_n}_{i'}\in (t^n_k,t^n_{k+1}]$, the two consecutive partition points of $\pi^n$ (with some slight abuse of notation, for details see  \eqref{eq.levycond.modified} and the details thereafter). As $n\to \infty$, the number of terms in the two inner sum in \eqref{eq.levycond.motivation} blows up. So this convergence to $0$ is a consequence of enough cancellation across neighboring increments such that their cross-area of the form $(X_{t^{l_n}_i}+X_{t^{l_n}_{i+1}})\otimes X_{t^{l_n}_{i'},t^{l_n}_{i'+1}}$  average to zero within two consecutive partition points of the slower partition $(\pi^n)_{n \in \N}$. This happens because the increments of $X$ have enough alternating signs over any interval, which is an indicator of the roughness of the process $X$. We now introduce a slightly extended version of \eqref{eq.levycond.motivation} called \emph{L{\'e}vy roughness} to characterize the invariance of L{\'e}vy area.

\begin{definition}[L{\'e}vy roughness]\label{def: levy roughness}
  Let $p \in (2,3)$, $c$ be a control function and $X\in C([0,T];\R^d)$ be a continuous path such that
  \begin{equation*}
    \sup_{(s,t) \in \Delta_T} \frac{|X_{s,t}|^p}{c(s,t)} \lesssim  1.
  \end{equation*}
  Moreover, suppose that $\T = (\T^n)_{n \in \N}$ is a $c$-balanced sequence of partitions and $\pi=(\pi^n)_{n \in \N}$, with $\pi^n = \{0 = t^n_0 < t^n_1 < \cdots < t^n_{N(d^n)} = T\}$, $n \in \N$, is a sequence of partitions with vanishing mesh size. We say that $X \in \cL^\T([0,T];\R^d)$ has {\em L{\'e}vy roughness} with index~$p$ along $\pi$ on $[0,T]$ if the following holds. There exists a sub-sequence or super-sequence $\allowdisplaybreaks d^n = \{0 = s^n_0 < s^n_1 < \cdots < s^n_{N(d^n)} = T\}$ of $\T$ with the following properties:
  \begin{enumerate}
    \item [(i)] $\frac{N(\pi^n)^p}{N(d^n)} \longrightarrow 0$ as $n \longrightarrow \infty$,
    \item [(ii)] for all $t\in [0,T]$:
          \begin{equation}\label{eq.levycond.modified}
            \begin{split}
			&\sum_{k=0}^{N(\pi^n)-1}\sum_{s^n_i, s^n_{i'}} (-1)^{i-\inf_l\{s^n_l>t^n_k\}} (X_{s^n_{i+1} \wedge t}+X_{s^n_i \wedge t})\otimes X_{s^n_{i'} \wedge t, s^n_{i'+1} \wedge t}\\
            &\quad \quad \quad- \sum_{k=0}^{N(\pi^n)-1}\sum_{s^n_i} (X_{s^n_{i+1}\wedge t}+X_{s^n_{i}\wedge t})\otimes X_{s^n_{i}\wedge t,s^n_{i+1}\wedge t} \enspace \longrightarrow 0 \quad \text{as} \quad n \longrightarrow \infty,
            \end{split}
          \end{equation}
          where the inner two sums are taken as $i,i' \text{ such that } s^n_i,s^n_{i'}\in (t^n_k,t^n_{k+1}] \text{ if } \inf_l\{s^n_l>t^n_{k+1}\}-\inf_l\{s^n_l>t^n_k\} \text{ is an odd number}$ and $s^n_i,s^n_{i+1},s^n_{i'},s^n_{i'+1}\in (t^n_k,t^n_{k+1}]  \text{ if } \inf_l\{s^n_l>t^n_{k+1}\}-\inf_l\{s^n_l>t^n_k\} \text{ is an even number}$.
  \end{enumerate}
\end{definition}

\begin{remark}
  The inner summations of \eqref{eq.levycond.modified} are taken in such a way that one always has an odd number of terms in the inner sum. In case there is an even number of $s^n_i\in (t^n_k,t^n_{k+1}]$, we ignore the last (largest) $s^n_.\in (t^n_k,t^n_{k+1}]$ in the inner summation of \eqref{eq.levycond.modified}.
\end{remark}

\begin{remark}
  The L{\'e}vy roughness condition \eqref{eq.levycond.modified} only depends on the functional value of $X$ at the dyadic partition points $s^n_i$ and does not depend on the functional value of $X$ at the partition points of $\pi$. So in practice, to verify L{\'e}vy roughness, one only needs to observe the underlying function/process $X$ along the reference dyadic partition.
\end{remark}

We note that \eqref{eq.levycond.modified} in Definition \ref{def: levy roughness} can be rewritten as follows: 
\begin{equation}\label{eq.levycond}
  \begin{split}
  &\sum_{k=0}^{N(\pi^n)-1}\sum_{s^n_i\neq s^n_{i'}\in (t^n_k,t^n_{k+1}]} (-1)^{i-\inf_l\{s^n_l>t^n_k\}} (X_{s^n_{i+1}\wedge t}+X_{s^n_{i} \wedge t})\otimes X_{s^n_{i'} \wedge t, s^n_{i'+1}\wedge t} \\
  &- \sum_{k=0}^{N(\pi^n)-1}\sum_{s^n_i \in (t^n_k,t^n_{k+1}]} (1-(-1)^{i-\inf_l\{s^n_l>t^n_k\}}) (X_{s^n_{i+1} \wedge t} + X_{s^n_i \wedge t}) \otimes X_{s^n_i \wedge t, s^n_{i+1} \wedge t}\\
  &+ \sum_{k=0}^{N(\pi^n)-1} \1_{\{\inf_l\{s^n_l>t^n_{k+1}\}-\inf_l\{s^n_l>t^n_{k}\}=\text{odd}\}}(X_{s^n_{\inf_l\{s^n_l>t^n_{k+1}\}-1} \wedge t} + X_{s^n_{\inf_l\{s^n_l>t^n_{k+1}\}} \wedge t})  \\
  &\hspace{20em} \otimes \sum_{s^n_i\in (t^n_k,t^n_{k+1}]} X_{s^n_i \wedge t,s^n_{i+1} \wedge t} \\
  &\longrightarrow 0 \quad \text{as} \quad n \longrightarrow \infty.
  \end{split}
\end{equation}
The third summation term in \eqref{eq.levycond} can be considered as a correction term. It only appears when within two consecutive partition points of $\pi^n$, an even number of partition points of $d^n$ are present. Additionally, the second summation term in \eqref{eq.levycond} vanishes exactly half of the time.

\begin{remark}
  Condition (i) of Definition~\ref{def: levy roughness} implies that $|d^n|^{\frac{1}{p}} = o(|\pi^n|)$. Furthermore, if we assume that $\pi$ is a balanced sequence of partitions, then
  \begin{equation*}
    N(\pi^n) = o(d^n) \iff |d^n|^\frac{1}{p} = o(|\pi^n|).
  \end{equation*}
\end{remark}

The following theorem shows that the L{\'e}vy roughness is a necessary and sufficient condition for the L{\'e}vy area to be invariant with respect to the choice of partition sequences.

\begin{theorem}[Invariance of L{\'e}vy area]\label{thm:levy roughness}
  Let $p \in(2,3)$, $c$ be a control function, and $X \in C([0,T];\R^d)$ be a continuous path such that
  \begin{equation*}
    \sup_{(s,t) \in \Delta_T}\frac{|X_{s,t}|^p}{c(s,t)} \lesssim  1.
  \end{equation*}
  Moreover, suppose that $\T = (\T^n)_{n \in \N}$ is a $c$-balanced sequence of partitions and $\pi=(\pi^n)_{n\in \N}$ is a sequence of partitions with vanishing mesh size. Then, $X$ has L{\'e}vy roughness with index~$p$ along $\pi$ if and only if
  \begin{equation*}
    X \in \cL^\pi([0,T];\R^d) \quad \text{and}, \quad  \forall  t\in [0,T], \quad \cL^\pi_t(X) = \cL^\T_t(X).
  \end{equation*}
\end{theorem}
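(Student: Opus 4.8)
The plan is to mirror the argument used for the invariance of quadratic variation in \cite[Lemma~4.1 and Theorem~4.2]{contdas2023}, while carrying along the additional antisymmetric (area) structure. Throughout I would keep in mind that $X \in \cL^\T([0,T];\R^d)$ is in force — this is implicit both in the definition of L{\'e}vy roughness and in the right-hand side of the claimed equivalence — so that for \emph{any} sub- or super-sequence $d^n = \{0 = s^n_0 < \dots < s^n_{N(d^n)} = T\}$ of $\T$ one automatically has $\sup_{t \in [0,T]} |\cL^{d^n}_t(X) - \cL^\T_t(X)| \to 0$ as $n \to \infty$. Hence the whole statement reduces to comparing $\cL^{\pi^n}_t(X)$ with $\cL^{d^n}_t(X)$ for a suitable such $d^n$, and establishing that this comparison is governed by the sum in \eqref{eq.levycond.modified}.

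The first and main step is an exact algebraic identity. I would fix $n$ and $t$, set $m_k := \inf_l\{s^n_l > t^n_k\}$, group the dyadic points $s^n_i$ according to the coarse interval $(t^n_k, t^n_{k+1}]$ of $\pi^n$ in which they lie, and exploit the telescoping identity $\sum_i (-1)^{i - m_k}(X_{s^n_{i+1}} + X_{s^n_i}) = X_{s^n_{m_k}} + X_{s^n_{m_{k+1}}}$ — valid precisely when the number of retained indices is odd, which is exactly why the inner sums in \eqref{eq.levycond.modified} are arranged to have an odd number of terms (dropping the last dyadic point, and absorbing it into a boundary error, when that count would be even). Combined with $\sum_i X_{s^n_i, s^n_{i+1}} = X_{s^n_{m_k}, s^n_{m_{k+1}}}$, this shows the first double sum in \eqref{eq.levycond.modified} equals $\sum_k (X_{s^n_{m_k}} + X_{s^n_{m_{k+1}}}) \otimes X_{s^n_{m_k}, s^n_{m_{k+1}}}$, i.e.\ the L{\'e}vy area of $X$ along the partition obtained from $\pi^n$ by rounding each point up to the nearest point of $d^n$; replacing each $s^n_{m_k}$ by $t^n_k$ turns this into $\cL^{\pi^n}_t(X)$ up to boundary corrections supported on the $d^n$-intervals straddling the points of $\pi^n$. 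Likewise, after reassembling the dyadic increments into the full sum over $d^n$ and correcting for the $O(N(\pi^n))$ straddling increments, the second sum in \eqref{eq.levycond.modified} equals $\cL^{d^n}_t(X)$ up to analogous boundary corrections; the reformulation \eqref{eq.levycond} is precisely this bookkeeping made explicit, its ``half the time'' term and its correction term accounting, respectively, for the diagonal contributions and for coarse intervals containing an even number of dyadic points. Altogether this yields
\begin{equation*}
  \text{(LHS of \eqref{eq.levycond.modified})} \;=\; \cL^{\pi^n}_t(X) - \cL^{d^n}_t(X) + \mathrm{Err}_n(t), \qquad t \in [0,T],
\end{equation*}
where the $\wedge t$ truncations affect only the single coarse interval containing $t$ and contribute one further term to $\mathrm{Err}_n(t)$.

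The second step is a uniform bound on $\mathrm{Err}_n(t)$. Every term of $\mathrm{Err}_n(t)$ is a tensor product one of whose factors is an increment of $X$ over a sub-interval of some $d^n$-interval; since $\T$, hence $d^n$, is $c$-balanced and $|X_{s,t}|^p \lesssim c(s,t)$, that factor is $\lesssim N(d^n)^{-1/p}$ uniformly, while the other factor is controlled by $\|X\|_p$ on the relevant interval. As there are $O(N(\pi^n))$ such terms, H{\"o}lder's inequality gives $\sup_t |\mathrm{Err}_n(t)| \lesssim (N(\pi^n)^{p-1}/N(d^n))^{1/p}\|X\|_p + (N(\pi^n)^{p/2}/N(d^n))^{2/p}$, which tends to $0$ by condition~(i) of Definition~\ref{def: levy roughness}, since $N(\pi^n)^{p-1} \le N(\pi^n)^p$ and $N(\pi^n)^{p/2} \le N(\pi^n)^p$; condition~(i) is calibrated precisely for this.

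It then remains to conclude. If $X$ has L{\'e}vy roughness along $\pi$, take the witnessing $d^n$: the left-hand side of \eqref{eq.levycond.modified} vanishes uniformly, so $\cL^{\pi^n}_t(X) - \cL^{d^n}_t(X) \to 0$ uniformly, and together with $\cL^{d^n}_t(X) \to \cL^\T_t(X)$ this gives $X \in \cL^\pi([0,T];\R^d)$ and $\cL^\pi_t(X) = \cL^\T_t(X)$. Conversely, if $X \in \cL^\pi([0,T];\R^d)$ with $\cL^\pi = \cL^\T$, then, since $N(\T^m) = 2^m$ and $N(\pi^n) \to \infty$ (as $|\pi^n| \to 0$), one can pick a non-decreasing $r \colon \N \to \N$ with $r(n) \to \infty$, $r(n) \ge n$ and $N(\pi^n)^p 2^{-r(n)} \to 0$, and set $d^n := \T^{r(n)}$, a sub-sequence of $\T$ satisfying~(i); for this choice $\cL^{d^n}_t(X) \to \cL^\T_t(X) = \cL^\pi_t(X) = \lim_n \cL^{\pi^n}_t(X)$, so $\cL^{\pi^n}_t(X) - \cL^{d^n}_t(X) \to 0$ uniformly and the two previous steps force the left-hand side of \eqref{eq.levycond.modified} to vanish, i.e.\ (ii) holds. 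The main obstacle is Step~1: organising the algebraic identity so that the alternating signs, the odd/even caveat, the correction term of \eqref{eq.levycond}, and all boundary and straddling terms are accounted for exactly; once that is in place, Steps~2 and~3 are routine.
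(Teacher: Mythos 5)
Your proposal is correct and follows essentially the same route as the paper's proof: both hinge on the same alternating telescoping identity (with the odd/even bookkeeping for $p(n,k+1)-p(n,k)$), both isolate a boundary/straddle error controlled via the $c$-balancedness of $\T$ and condition~(i), and both reduce the remaining core term to the L\'evy roughness sum, with the converse direction obtained by constructing a sub-sequence $d^n=\T^{r(n)}$ satisfying condition~(i) and reversing the estimates. The only difference is organizational — you express the roughness sum as $\cL^{\pi^n}-\cL^{d^n}$ plus an error, whereas the paper decomposes $\cL^{\pi^n}-\cL^{d^n}$ into a boundary part and a roughness part — which is the same identity read in the other direction.
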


We will only prove this result for the terminal time~$T$. For a general $t\in[0,T]$, the proof follows a similar line of argument.

\begin{proof}
  \emph{Part~1: Sufficiency of L{\'e}vy roughness.} Assume $D = (d^n)_{n \in \N}$ be the sub-/super-sequence of the sequence of dyadic partitions $\T$ along which $X$ has L{\'e}vy roughness and, in particular, \eqref{eq.levycond.modified} holds. Since $D= (d^n)_{n \in \N}$ is a sub-/super-sequence of the dyadic $\T$, $D$ is also a $c$-balanced sequence of partitions of $[0,T]$. The points of $\pi^n$ are interspersed along those of $d^n$. The generic partition points of $\pi^n$ are denoted as $t^n_k$, and the generic partition points of $d^n$ are denoted as $s^n_i$.

  For $k = 1, 2, \dots, N(\pi^n) - 1$, we define
  \begin{equation*}
    p(n,k) := \inf\{i \geq 1: \, s^n_i \in (t^n_k,t^n_{k+1}] \}.
  \end{equation*}
  Then, we get the following inequality regarding the partition points of $\pi^n$ and $d^n$:
  \begin{equation*}
    s^n_{p(n,k)-1} \leq t^n_k < s^n_{p(n,k)}  <  \cdots < s^n_{p(n,k+1)-1} \leq t^n_{k+1} <  s^n_{p(n,k+1)},
  \end{equation*}
  where we set $p(n,N(\pi^n)) - 1 = N(d^n)$ and $p(n,0) = 1 $.

  Let us first define an auxiliary partition
  \begin{equation*}
    \sigma^n = \{0 = s^n_0 < s^n_{p(n,1)} < s^n_{p(n,2)} < \cdots < s^n_{p(n,N(\pi^n))-1} =T\}.
  \end{equation*}
  So this new sequence of partition $\sigma$ is ``very close'' and ``slightly on the right side'' of the partition sequence $\pi$ and every partition point of $\sigma^n$ is a partition point of $d^n$ and, hence, a partition point of the dyadic partition of the form $\frac{i}{2^l}$ for some suitable $i, l \in \N$.
  
  \smallskip
  Now let us consider two cases:

  \emph{Case 1}: $p(n,k+1) - p(n,k) = \text{odd}$. In this case we decompose $X_{t^n_k} + X_{t^n_{k+1}}$ as follows:
  \begin{equation*}
    \underbrace{X_{t^n_k}+X_{t^n_{k+1}}}_{=: \Delta^{+}_{n,k}}= \underbrace{X_{s^n_{p(n,k+1)},t^n_{k+1}}}_{=: D_k} +
    \underbrace{ X_{s^{n}_{p(n,k)},t^n_k}}_{=: B_k}
    + \underbrace{ \sum_{i=p(n,k)}^{p(n,k+1)-1} (-1)^{i-p(n,k)}(X_{s^n_{i+1}}+X_{s^n_i}) }_{=: C^+_k}.
  \end{equation*}

  \smallskip
  \emph{Case 2}: $p(n,k+1) - p(n,k) = \text{even}$. In this case we write:
  \begin{equation*}
    \underbrace{X_{t^n_{k}}+X_{t^n_{k+1}}}_{=: \Delta^{+}_{n,k}}= \underbrace{X_{s^n_{p(n,k+1)-1} ,t^n_{k+1}}}_{=: D_k} +
    \underbrace{X_{s^n_{p(n,k)}, t^n_k}}_{=: B_k}
    + \underbrace{ \sum_{i=p(n,k)}^{p(n,k+1)-2} (-1)^{i-p(n,k)}(X_{s^n_{i+1}}+X_{s^n_i} ) }_{=: C^+_k}.
  \end{equation*}
  Note that, in Case~1 and Case~2, the quantity $C_k^+$ and $D_k$ are slightly different. The difference between Case~1 and Case~2 for both $C_k^+$ and $D_k$ is exactly $\delta_k:= X_{s^n_{p(n,k+1)-1}, s^n_{p(n,k+1)}}$. We can think of the term $\delta_k$ as a correction term appearing in Case 2 (even case), which is a ``very small'' increment term (increment of length smaller than or equal to $|d^n|$), and we will show that this term can be controlled as a consequence of the existence of the control function~$c$.

  Without loss of generality, we will assume $X_0 = 0$. We can observe that for both cases,
  \begin{align*}
    &|D_k| \lesssim \max_{(s,t) \in d^n} c(s,t)^{\frac{1}{p}}, \quad |B_k| \lesssim \max_{(s,t) \in d^n} c(s,t)^{\frac{1}{p}}, \\
    &|C^+_k| \lesssim c(0,T)^{\frac{1}{p}} < \infty, \quad |\Delta^+_{n,k}| \lesssim c(0,T)^{\frac{1}{p}} < \infty.
  \end{align*} 
  Similarly to the proof of~\cite[Theorem~4.2]{contdas2023}, in Case $1$ above we write,
  \begin{equation*}
    \underbrace{X_{t^n_k,t^n_{k+1}}}_{=: \Delta^-_{n,k}}=- \underbrace{X_{s^n_{p(n,k)},t^n_{k}}}_{= B_k}+ \underbrace{X_{s^n_{p(n,k+1)},t^n_{k+1}}}_{= D_k}  
    + \underbrace{ \sum_{i=p(n,k)}^{p(n,k+1)-1} X_{s^n_i,s^{n}_{i+1}}}_{=: C^-_k}.
  \end{equation*}
  In Case 2 we slightly modify the decomposition of $X_{t^n_{k},t^n_{k+1}}$ from the proof of~\cite[Theorem~4.2]{contdas2023} to match $D_k$ in Case 2 (which is slightly different from $D_k$ in Case 1) and write
  \begin{equation*}
    \underbrace{X_{t^n_k,t^n_{k+1}}}_{= \Delta^-_{n,k}}= - \underbrace{X_{s^n_{p(n,k)},t^n_k}}_{= B_k} +\underbrace{X_{s^n_{p(n,k+1)-1},t^n_{k+1}}}_{= D_k}
    + \underbrace{ \sum_{i=p(n,k)}^{p(n,k+1)-2} X_{s^n_i,s^n_{i+1}}}_{=C^-_k}.
  \end{equation*}
  We can observe that for both cases, 
  \begin{equation*}
  |C^-_k| \lesssim \max_{t^n_k \in \pi^n} c(t^n_k,s^n_{p(n,i+1)})^{\frac{1}{p}}\lesssim \max_{t^n_k \in \pi^n} c(t^n_k,t^n_{k+2})^{\frac{1}{p}}.
  \end{equation*}
  Now the difference of L{\'e}vy areas along the two partition sequences $\pi^n$ and $\d^n$ at level $n$ can be written as follows:
  \begin{equation}\label{eq.inv}
    \begin{split}
    &\cL^{\pi^n}_T(X) - \cL^{d^n}_T(X)\\
    &\quad = \sum_{k=0}^{N(\pi^n)-1} \bigg[{\Delta^+_{n,k}}\otimes {\Delta^-_{n,k}} - \sum_{i=p(n,k)}^{p(n,k+1)-1}  (X_{s^n_{i+1}}+X_{s^n_i})\otimes X_{s^n_i,s^n_{i+1}} \bigg] \\
    &\quad=\sum_{k=0}^{N(\pi^n)-1} \Big[{\Delta^+_{n,k}} \otimes {\Delta^-_{n,k}}- C_k^+ \otimes C_k^- \Big] \\
    &\quad\quad+ \sum_{k=0}^{N(\pi^n)-1} \bigg[C_k^+ \otimes C_k^- -\sum_{i=p(n,k)}^{p(n,k+1)-1}  (X_{s^n_{i+1}}+ X_{s^{n}_{i}})\otimes X_{s^n_i,s^n_{i+1}} \bigg].
    \end{split}
  \end{equation}
  We will first show that the norm of the first term, namely,
  \begin{equation*}
    \bigg|\sum_{k=1}^{N(\pi^n)-1} \Big[\Delta^+_{n,k} \otimes \Delta^-_{n,k} - C_k^+\otimes C_k^- \Big] \bigg|
  \end{equation*}
  converges to $0$ as $n \to \infty$ as a consequence of $\T$ being $c$-balanced control. Then, we will show that the norm of the second term in equation~\eqref{eq.inv} will converge to $0$ as $n \to \infty$ when $X$ has L{\'e}vy roughness, which will then complete the proof of this theorem. \\
  \emph{Step~1:} So, it holds that,
  \begin{equation}\label{Eq.firstterm}
    \begin{split}
    &|\Delta^+_{n,k} \otimes \Delta^-_{n,k} - C_k^+ \otimes C_k^-|\\
    &\quad = |(D_k+B_k+C_k^+)\otimes (\pm B_k+D_k+C_k^-) -  C_k^+ \otimes C_k^-|\\
    &\quad=|(D_k+B_k)\otimes (\pm B_k+D_k) + C^+_k \otimes (\pm B_k+D_k) +(D_k+B_k) \otimes C_k^-|
    \end{split}.
  \end{equation}
  The $\pm$ sign on the second sum is chosen to be $-$ve and $+$ve according to the two odd and even cases above. $D_k$, $B_k$ are increments of $X$, where the increments are taken at most of the length $|d^n|$ apart, and we took the sum over $\pi^n$. Since $\T$ is $c$-balanced, we get that
  \begin{align*}
  \bigg|\sum_{k=0}^{N(\pi^n)-1} (D_k+B_k) \otimes(\pm B_k + D_k) \bigg|
    &\leq \sum_{k=0}^{N(\pi^n)-1}  4 \max_{[s,t]\in d^n} |c(s,t)|^{\frac{2}{p}}  \\
    &\lesssim N(\pi^n) \min_{[s,t] \in d^n} |c(s,t)|^{\frac{2}{p}}
    \lesssim \frac{N(\pi^n)}{N(d^n)^{\frac{2}{p}}} \, c(0,T)^{\frac{2}{p}} \xrightarrow{n \to \infty} 0.
  \end{align*}
  In the second inequality, we use that the dyadic sub-/super-sequence $d = (d^n)_{n \in \N}$ is also $c$-balanced and the limit follows from condition (i) of Definition~\ref{def: levy roughness}.

  Now if we can show that $|\sum_{\pi^n} C_k^+ \otimes D_k|\to 0$ as $n \to \infty$, in a similar line of arguments, one will be able to show that all the other terms of $\sum_{k=1}^{N(\pi^n)-1} [\Delta^+_{n,k} \otimes \Delta^-_{n,k} - C_k^+ \otimes  C_k^- ] $ from \eqref{Eq.firstterm} also converge to $0$ in norm as $n \to \infty$ (we will skip the proofs of the other three terms).

  Since $X$ is a continuous function and we only focus on a compact interval $[0,T]$, we can conclude that $X$ is bounded from above. Since we assume $X_0 = 0$, we have, $\sup_{t\in [0,T]} |X_t| = \sup_{t\in [0,T]} |X_{0,t}| \lesssim c(0,T)^{\frac{1}{p}} < \infty$. So,
  \begin{align*}
    \Big |\sum_{k=0}^{N(\pi^n)-1}{C_k^+}\otimes D_k \Big|
    &\leq \sum_{k=0}^{N(\pi^n)-1} 2 c(0,T)^{\frac{1}{p}} \cdot \max_{[s,t] \in d^n} c(s,t)^{\frac{1}{p}} \\
    &= N(\pi^n) \cdot 2 c(0,T)^{\frac{1}{p}} \cdot \max_{[s,t]\in d^n} c(s,t)^{\frac{1}{p}} \\
    & \lesssim  N(\pi^n) \min_{[s,t]\in d^n} c(s,t)^{\frac{1}{p}}
    \lesssim \frac{N(\pi^n)}{N(d^n)^{\frac{1}{p}}} \, c(0,T)^{\frac{1}{p}}\xrightarrow[]{n\to\infty} 0.
  \end{align*}
  The last limit comes from condition (i) of Definition~\ref{def: levy roughness}. This concludes that the norm of the first sum of equation \eqref{eq.inv} converges to $0$ as $n \to \infty$.

  \emph{Step 2: }Let us now focus on the second sum of Equation~\eqref{eq.inv}. We further define (ignoring $n$ which is somewhat fixed unless we take the limit as $n \to \infty$) for notational convenience
  \begin{equation*}
    a_i := X(s^n_{i+1}) + X(s^n_i) \quad \text{and} \quad b_i := X(s^n_{i+1})- X(s^n_i).
  \end{equation*}
  So, it follows that
  \begin{equation}\label{eq. calculation}
  \begin{split}
    &\sum_{k=0}^{N(\pi^n)-1} \bigg[C_k^+ \otimes C_k^- - \sum_{i=p(n,k)}^{p(n,k+1)-1} (X_{s^n_{i+1}} + X_{s^n_i})\otimes X_{s^n_i,s^n_{i+1}}  \bigg] \\
    &= \sum_{k=0}^{N(\pi^n)-1} \bigg[C_k^+\otimes  C_k^- - \sum_{i=p(n,k)}^{p(n,k+1)-1} a_i \otimes b_i \bigg] \\
    &= \sum_{k=0}^{N(\pi^n)-1} \bigg[\bigg(\sum_{i=p(n,k)}^{p(n,k+1)-q_{n,k}} (-1)^{i-p(n,k)} a_i \bigg) \otimes  \bigg(\sum_{i=p(n,k)}^{p(n,k+1)-q_{n,k}} b_i\bigg) -\sum_{i=p(n,k)}^{p(n,k+1)-1} a_i \otimes b_i \bigg] \\
    &=\sum_{k=0}^{N(\pi^n)-1} \bigg[\bigg(\sum_{i=p(n,k)}^{p(n,k+1)-q_{n,k}} (-1)^{i-p(n,k)} a_i \bigg) \otimes \bigg(\sum_{i=p(n,k)}^{p(n,k+1)-q_{n,k}} b_i\bigg) -\sum_{i=p(n,k)}^{p(n,k+1)-q_{n,k}}  a_i \otimes b_i \bigg]  \\
    &\hspace{8cm} + \sum_{k=0}^{N(\pi^n)-1} a_{p(n,k+1)-1}\otimes b_{p(n,k+1)-1} \1_{\{q_{n,k} = 2\}},
    \end{split}
  \end{equation}
  where
  \begin{equation*}
    q_{n,k} = \1_{\{(p(n,k+1)-p(n,k) = \text{ odd}\}} + 2 \1_{\{p(n,k+1)-p(n,k) = \text{ even}\}}.
  \end{equation*}
  It holds for the first term above that
  \begin{equation*}
    \sum_{k=0}^{N(\pi^n)-1} \bigg[\sum_{i, j=p(n,k)}^{p(n,k+1)-q_{n,k}} (-1)^{i-p(n,k)} a_i \otimes  b_j - \sum_{i=p(n,k)}^{p(n,k+1)-q_{n,k}} a_i \otimes b_i \bigg] \longrightarrow 0  \quad \text{as} \quad n \to \infty,
  \end{equation*}
  from the definition of L{\'e}vy roughness in \eqref{eq.levycond.modified}. Furthermore, the remaining term in \eqref{eq. calculation} can be bounded as
  \begin{align*}
    \bigg |\sum_{k=0}^{N(\pi^n)-1} \1_{\{q_{n,k} = 2\}} a_{p(n,k+1)-1} \otimes b_{p(n,k+1)-1}\bigg|
    &\leq \sum_{k=0}^{N(\pi^n)-1} |a_{p(n,k+1)-1} \otimes b_{p(n,k+1)-1}|\\
    &\lesssim N(\pi^n) \cdot c(0,T)^{\frac{1}{p}} \cdot \max_{[s,t]\in d^n} c(s,t)^{\frac{1}{p}},
  \end{align*}
  and converges to $0$ as $n \to \infty$, which follows from $\frac{N(\pi^n)}{N(d^n)^{\frac{1}{p}}} \to 0$ as $n \to \infty$. This shows that the L{\'e}vy area of $X$ along $\pi$ exists and is equal to the L{\'e}vy area of $X$ along the sub-/super-sequence $D$ of $\T$, which is the same as the L{\'e}vy area of $X$ along $\T$, completing the first part of the proof.

  \emph{Part 2: Necessity of L{\'e}vy roughness.} We will first construct a sequence of partitions $D=(d^n)_{n\in\N}$ along which we will show that the L{\'e}vy roughness condition~\eqref{eq.levycond.modified} does hold. Such a sequence of partitions $D=(d^n)_{n\in\N}$ needs to satisfy condition (i) of Definition~\ref{def: levy roughness}. Now fix any $p \in (2,3)$, take any $\varepsilon > 0$ and assume
  \begin{equation*}
    \pi^n = \{0 = t^n_0 < t^n_1 < \cdots < t^n_{N(d^n)} = T\}.
  \end{equation*}
  Since $X$ has L{\'e}vy area along $\T$, it indeed has L{\'e}vy area along any sub-/super-sequence of $\T$. Since both $\T$ and $\pi$ have vanishing mesh size and $N(\T^n) = 2^n \uparrow \infty$ as $n \to \infty$, one can construct a sub-/super-sequence $D=(d^n)_{n\in\N}$ of $\T$ which satisfies condition (i) of Definition \ref{def: levy roughness}. Firstly, if $N(\pi^n) = o(N(\T^n)^{\frac{1}{p}})$, i.e. $N(\pi^n) = o(2^{\frac{n}{p}})$, then we choose the partition $D=(d^n)_{n\in\N}$ to be the dyadic partition $\T$ itself, i.e. set $l_n=n$. Otherwise, we have $\limsup_{n \in \N} \frac{N(\pi^n)^p}{N(\T^n)} > 0$, which implies that $\limsup_{n \in \N} \frac{N(\pi^n)^{p+\varepsilon}}{N(\T^n)} = \infty$. Now we define the increasing subsequence $l_n$ as
  \begin{equation*}
    l_n = \inf\{l\geq n \; : \; N(\pi^n)^{p+\varepsilon} \leq N(\T^l)\}.
  \end{equation*}
  Since $N(\T^l) = 2^l \to \infty$ as $n \to \infty$, finding such an $l_n$ is always possible and since $l_n > n$, we also have $l_n \to \infty$ as $n \to \infty$. So from the construction above, $d^n = \T^{l_n}$ for all $n \in \N$ is indeed a sub-/super-sequence of $\T$ which satisfies $(i)$ of Definition~\ref{def: levy roughness}. Now we will show that condition $(ii)$ of Definition~\ref{def: levy roughness} is also satisfied.

  The generic partition points of $\pi^n$ are denoted as $t^n_k$, and the generic partition points of $d^n$ are denoted as $s^n_j$. Define for $k= 1, 2, \dots, N(\pi^n) - 1$:
  \begin{equation*}
    p(n,k) = \inf\{i \geq 1: \, s^n_m \in (t^n_{k},t^n_{k+1}] \}.
  \end{equation*}
  Then, we get the following inequality regarding the partition points of $\pi^n$ and $d^n$:
  \begin{equation*}
    s^n_{p(n,k)-1} \leq t^n_k < s^n_{p(n,k)}  <  \cdots < s^n_{p(n,k+1)-1} \leq t^n_{k+1} <  s^n_{p(n,k+1)},
  \end{equation*}
  where $p(n,N(\pi^n)) - 1 = N(d^n)$ and $p(n,0) = 1 $.

  Since, by assumption, $\cL^\pi_t(X) = \cL^\T_t(X)$, and $D$ is a sub-/super-sequence of $\T$, we also have that
  \begin{align*}
    &\lim_{n \to \infty} |\cL^{\pi^n}_t(X) - \cL^{d^n}_t(X)| = 0 \\
    \iff &\lim_{n \to \infty}  \sum_{k=0}^{N(\pi^n)-1} (X_{t^n_k} + X_{t^n_{k+1}}) \otimes X_{t^n_i,t^n_{i+1}} - \sum_{k'=0}^{N(d^n)-1} (X_{t^n_{k'}} + X_{t^n_{k'+1}}) \otimes X_{t^n_i,t^n_{i+1}} = 0 \\
    \iff &\lim_{n\to \infty}  \sum_{k=0}^{N(\pi^n)-1} \bigg[ (X_{t^n_k} + X_{t^n_{k+1}}) \otimes X_{t^n_i,t^n_{i+1}} - \sum_{i=p(n,k)}^{p(n,k+1)-1}  (X_{s^{n}_i} + X_{s^{n}_{i+1}}) \otimes X_{s^n_i,s^{n}_{i+1}} \bigg] = 0.
  \end{align*}
  The rest of the proof is very similar to the proof of Part~1. Since all the arguments of the main theorem are reversible, we also obtain L{\'e}vy roughness as a necessary condition.
\end{proof}

\subsection{Invariance of pathwise integration}

As a consequence of the concepts of quadratic roughness and of L{\'e}vy roughness, we can deduce the invariance of the pathwise integral~\eqref{eq: general pathwise integral}. For that propose, recall that $\T = (\T^n)_{n\in \N}$ denotes the sequence of dyadic partitions.

\begin{proposition}
  Let $\gamma \in [0,1]$, $p \in (2,3)$, let $X$ and $\pi = (\pi^n)_{n \in \N}$ be as in Assumption~\ref{ass: partitions}. Suppose that $X$ has quadratic roughness\footnote{For $\gamma = \frac{1}{2}$, the assumption of quadratic roughness is not necessary due to Lemma~\ref{lem: Levy eq to 1/2 RIE}.} and L{\'e}vy roughness with index $p$ along $\pi$, and suppose that $X$ satisfies Property $\gamma$-\textup{(RIE)} relative to $\gamma$, $p$ and $\pi$ as well as to $\T$. Let $q > 0$ be such that $\frac{2}{p} + \frac{1}{q} > 1$ and let $(Y,Y') \in \mathscr{C}^{p,q}_X([0,T];\R^{m \times d})$ be a controlled path. Then,
  \begin{equation*}
    \int_0^t Y_s \dd^{\gamma,\pi} X_s = \int_0^t Y_s \dd^{\gamma,\T} X_s , \qquad t \in [0,T],
  \end{equation*}
  where we refer to \eqref{eq: general pathwise integral} for the definition of these integrals.
\end{proposition}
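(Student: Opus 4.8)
The plan is to reduce the claimed identity, via Theorem~\ref{thm: general pathwise stochastic integral}, to an equality of rough integrals, and then to an equality of the two second-level lifts of $X$ induced by $\pi$ and by $\T$. Since $X$ satisfies Property $\gamma$-\textup{(RIE)} relative to $\gamma$, $p$ and $\pi$, Proposition~\ref{prop: rough path lift under gamma RIE} yields a canonical $p$-rough path $\bX^{\gamma,\pi} = (X,\X^{\gamma,\pi})$ (writing the superscript $\pi$ to emphasise the partition sequence), and Theorem~\ref{thm: general pathwise stochastic integral} gives $\int_0^t Y_s \dd^{\gamma,\pi} X_s = \int_0^t Y_s \dd \bX^{\gamma,\pi}_s$ for the controlled path $(Y,Y')$. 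Running the identical argument with $\T$ in place of $\pi$ — legitimate since $X$ also satisfies Property $\gamma$-\textup{(RIE)} relative to $\T$ — produces $\bX^{\gamma,\T} = (X,\X^{\gamma,\T})$ and $\int_0^t Y_s \dd^{\gamma,\T} X_s = \int_0^t Y_s \dd \bX^{\gamma,\T}_s$; note that $(Y,Y')$ serves simultaneously for both lifts because they share the same first level $X$. Since the rough integral depends on the driving signal only through the rough path, it therefore suffices to prove $\bX^{\gamma,\pi} = \bX^{\gamma,\T}$, and as the first levels already coincide this reduces to $\X^{\gamma,\pi} = \X^{\gamma,\T}$.

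Next I would invoke the symmetric/antisymmetric decomposition of Remark~\ref{rem:rough path and levy area},
\begin{equation*}
  \X^{\gamma,\pi}_{s,t} = \tfrac12\bigl(X_{s,t} \otimes X_{s,t} - [X]^{\gamma,\pi}_{s,t}\bigr) + \tfrac12\bigl(\cL^\pi_{s,t}(X) - (X_s + X_t) \otimes X_{s,t}\bigr),
\end{equation*}
together with the analogous identity for $\T$. The combination $X_{s,t}\otimes X_{s,t} - (X_s+X_t)\otimes X_{s,t}$ is partition-independent, so it remains only to show $[X]^{\gamma,\pi} = [X]^{\gamma,\T}$ and $\cL^\pi(X) = \cL^\T(X)$. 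For the L\'evy area: by Corollary~\ref{cor: gamma RIE eq to Levy and quad var} (for $\gamma\neq\frac12$), respectively Lemma~\ref{lem: Levy eq to 1/2 RIE} (for $\gamma=\frac12$), Property $\gamma$-\textup{(RIE)} relative to $\pi$ and to $\T$ implies $X\in\cL^\pi([0,T];\R^d)\cap\cL^\T([0,T];\R^d)$; choosing a common control $c$ for which $\sup_{(s,t)}|X_{s,t}|^p/c(s,t)\lesssim1$ and $\T$ is $c$-balanced (such a $c$ is part of the standing data of L\'evy roughness), Theorem~\ref{thm:levy roughness} applied along $\pi$ gives $\cL^\pi_t(X)=\cL^\T_t(X)$ for all $t\in[0,T]$. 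For the quadratic variation part: when $\gamma=\frac12$ both $[X]^{\gamma,\pi}$ and $[X]^{\gamma,\T}$ vanish by definition; when $\gamma\neq\frac12$, Lemma~\ref{lemma: quadratic variation under gamma RIE} gives $[X]^{\gamma,\pi}=(1-2\gamma)[X]^\pi$ and $[X]^{\gamma,\T}=(1-2\gamma)[X]^\T$, while Corollary~\ref{cor: gamma RIE eq to Levy and quad var} yields $X\in Q^\pi([0,T];\R^d)\cap Q^\T([0,T];\R^d)$, and Proposition~\ref{prop: invariance of quad variation} applied to the quadratic roughness of $X$ along $\pi$ gives $[X]^\pi_t=[X]^\T_t$ for all $t$. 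Hence $\X^{\gamma,\pi}=\X^{\gamma,\T}$, so the two rough integrals — and therefore the two pathwise integrals — agree pointwise in $t\in[0,T]$.

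The essentially routine points are the bookkeeping ones: amalgamating into a single control function the controls appearing in the definitions of quadratic roughness, L\'evy roughness, and the two instances of Property $\gamma$-\textup{(RIE)} (take their sum); observing that the controlled path is shared by both lifts; and unwinding the definitions of the integrals in \eqref{eq: general pathwise integral}. The only mild obstacle is verifying that the hypotheses of Proposition~\ref{prop: invariance of quad variation} and Theorem~\ref{thm:levy roughness} are genuinely available — specifically that $X$ possesses quadratic variation and L\'evy area along \emph{both} $\pi$ and $\T$ and that $\T$ is $c$-balanced — which is precisely what the equivalences established in Subsection~\ref{sec: on the assumption of general Riemann integrals} supply; once this is in place the proof is a direct assembly of the two invariance results with Theorem~\ref{thm: general pathwise stochastic integral}.
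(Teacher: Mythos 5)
Your proof is correct and follows essentially the same route as the paper: reduce via Theorem~\ref{thm: general pathwise stochastic integral} to the identity of the two rough path lifts, decompose the second-level term into symmetric and antisymmetric parts as in Remark~\ref{rem:rough path and levy area}, and then invoke Proposition~\ref{prop: invariance of quad variation} and Theorem~\ref{thm:levy roughness} to deduce that the quadratic variation and L\'evy area (hence the lifts) along $\pi$ and $\T$ coincide. You spell out the bookkeeping (the $\gamma=\frac12$ case, the common control, the shared controlled path) that the paper leaves implicit, but the argument is the same.
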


\begin{proof}
  Since $X$ has quadratic roughness and L{\'e}vy roughness with index~$p$ along $\pi$, the resulting quadratic variation and L{\'e}vy area along $\pi$ and $\T$ coincide by Proposition~\ref{prop: invariance of quad variation} and Theorem~\ref{thm:levy roughness}, and, consequently, the resulting rough paths coincide as well by Remark~\ref{rem:rough path and levy area}. Hence, the corollary follows immediately by Theorem~\ref{thm: general pathwise stochastic integral}.
\end{proof}

\begin{remark}
    Assuming that a path $X$ has quadratic roughness and L{\'e}vy roughness along $\pi$ and assuming that $X$ satisfies Property $\gamma$-(RIE) relative to $\pi$ and $\T$, in particular, the rough paths $\mathbb{X}^{\gamma,\pi}$ and $\mathbb{X}^{\gamma,\T}$ as defined in Proposition~\ref{prop: rough path lift under gamma RIE} do coincide. Differently said, we obtain the invariance of the rough path with respect to the choice of the sequence of partitions.
\end{remark}

\begin{remark}
The quadratic roughness condition and the L{\'e}vy roughness condition imply invariance of quadratic variation and Levy area with respect to the choice of the sequence of partitions. We would like to point out that the partitions in consideration in Section \ref{sec: invariance} are all deterministic partitions, and do not depend on the underlying process (e.g., dyadic Lebesgue partitions). The $p$-th variation along Lebesgue-type partitions often behaves very differently from that of deterministic partitions. For example, see the conjecture in \cite[Section~1.2]{Das2023}, which suggests the $1/H$-th variation of fractional Brownian motion along deterministic partitions (e.g., dyadic partitions) differs from the $1/H$-th variation of fractional Brownian motion along Lebesgue-type partitions.
\end{remark}

\section{Application to stochastic processes and integration}\label{sec: application to stochastic integration}

In this section we demonstrate that the deterministic theory developed in Section~\ref{sec: the rough integral as a limit of general Riemann sums} and ~\ref{sec: invariance} can be applied to the sample paths of various stochastic processes and allows to recover classical stochastic integration like It{\^o} and Stratonovich integration.

\medskip
As shown in~\cite[Sections~3 and~4]{Allan2023c}, almost all sample paths of the following stochastic processes satisfy Property~\textup{(RIE)}:
\begin{itemize}
  \item[$\vcenter{\hbox{\scriptsize$\bullet$}}$] Brownian motion, relative to sequences of equidistant partitions $(\pi^n)_{n \in \N}$ such that $|\pi^n|^{2-\frac{4}{p}} \to 0$ as $n \to \infty$;
  \item[$\vcenter{\hbox{\scriptsize$\bullet$}}$] It{\^o} processes, relative to the dyadic partitions, i.e., $\pi^n = \{k2^{-n}T\}_{k=0}^{2^n}$, $n \in \N$;
  \item[$\vcenter{\hbox{\scriptsize$\bullet$}}$] continuous semimartingales, relative to the sequence of partitions $\pi^n = \{\tau^n_k: k \in \N \cup \{0\}\}$, $n \in \N$, where $\tau^n_0 = 0$, $\tau^n_k = \inf \{t > \tau^n_{k-1}: |t-\tau^n_{k-1}| + |X_t - X_{\tau^n_{k-1}}| \geq 2^{-n} \} \wedge T$, $k \in \N$;
  \item[$\vcenter{\hbox{\scriptsize$\bullet$}}$] the pair $(W,W^H)$, where $W$ denotes the Brownian motion and $W^H$ the fractional Brownian motion with Hurst parameter $H > \frac{1}{2}$, relative to sequences of equidistant partitions $(\pi^n)_{n \in \N}$ such that $(\pi^n)^{2-\frac{4}{p}} \to 0$ as $n \to \infty$;
  \item[$\vcenter{\hbox{\scriptsize$\bullet$}}$] the pair $(\eta, W)$, where $\eta$ denotes a deterministic $\frac{1}{p}$-H{\"o}lder continuous path, relative to the sequence of dyadic partitions.
\end{itemize}
Consequently, due to Lemma~\ref{lem: RIE eq to gamma RIE}, \ref{lem: RIE eq to Levy and quad var} and~\ref{lem: Levy eq to 1/2 RIE}, almost all sample paths of the aforementioned stochastic processes possess quadratic variation and L{\'e}vy area as well as satisfy Property $\gamma$-\textup{(RIE)} relative to any $\gamma \in [0,1]$ and $p \in (2,3)$.

\begin{remark}
  Since $X$ having Property $\gamma$-\textup{(RIE)} relative to $\gamma = \frac{1}{2}$ is equivalent to $X$ possessing L{\'e}vy area, the deterministic theory of Section~\ref{sec: the rough integral as a limit of general Riemann sums} and~\ref{sec: invariance} can also be applied to stochastic processes which do not possess quadratic variation, like the fractional Brownian motion for Hurst parameter $H \in (\frac{1}{3}, \frac{1}{2})$. Indeed, the sample paths of fractional Brownian motion do not possess quadratic variation, see e.g.~\cite{Rogers97,Dudley1998}, and, consequently, they do not satisfy Property \textup{(RIE)}, see also Lemma~\ref{lem: RIE eq to Levy and quad var}. However, it is well-established that almost all sample paths of the fractional Brownian motion possess L{\'e}vy area relative to the dyadic partitions, which follows from the fact that almost all sample paths of fractional Brownian motion are of finite $\frac{1}{\alpha}$-variation for $\alpha < H$ and e.g.~\cite[Theorem~2]{Coutin2002}, where a rough path lift over the fractional Brownian motion was constructed using dyadic approximations.
\end{remark}

To relate the pathwise integral defined in~\eqref{eq: general pathwise integral} to classical stochastic integration like It{\^o} and Stratonovich integration, we consider a $d$-dimensional continuous semimartingale $X=(X_t)_{t\in [0,T]}$, defined on a probability space $(\Omega,\mathcal{F},\P)$ with a filtration $(\mathcal{F}_t)_{t \in [0,T]}$ satisfying the usual conditions, i.e., completeness and right-continuity.

We first consider It{\^o} integration, which corresponds to $\gamma=0$. It is well-known that the semimartingale $X$ can be lifted to a random rough path via It{\^o} integration, see e.g.~\cite{Coutin2005}, by defining $\bX = (X,\X) \in \mathcal{C}^p([0,T];\R^d)$, $\P$-a.s., for any $p \in (2,3)$, where
\begin{equation}\label{eq: Ito rough path}
  \X_{s,t} := \int_s^t (X_{r} - X_s) \otimes \d X_r = \int_s^t X_r \otimes  \d X_r - X_s \otimes X_{s,t}, \qquad (s,t) \in \Delta_T.
\end{equation}

\begin{lemma}\label{lem: Ito integration}
  Let $p \in (2,3)$ and let $\pi = (\pi^n)_{n\in \N}$ be a sequence of adapted partitions with $\pi^n = \{\tau^n_k\}_{k\in \N}$, $n \in \N$, such that each $\tau^n_k$ is a stopping time and such that for almost every $\omega \in \Omega$, $(\pi^n(\omega))_{n \in \N}$ is a sequence of (finite) partitions of $[0,T]$ with vanishing mesh size.

  Let $X$ be a $d$-dimensional continuous semimartingale, and suppose that the sample path $X(\omega)$ satisfies Property $\gamma$-\textup{(RIE)} relative to $\gamma = 0$, $p$ and $(\pi^n(\omega))_{n \in \N}$.
  \begin{enumerate}
    \item[(i)] The random rough paths $\bX = (X,\X)$, with $\X$ defined pathwise via~\eqref{eq: rough path lift via gamma RIE} for $\gamma = 0$, and with $\X$ defined by stochastic integration as in~\eqref{eq: Ito rough path}, coincide $\P$-almost surely.

    \item[(ii)] Let $Y=(Y_t)_{t\in [0,T]}$ be a stochastic process adapted to $(\mathcal{F}_t)_{t\in [0,T]}$. Suppose that, for almost every $\omega \in \Omega$, $(Y(\omega),Y'(\omega))$ is a controlled path in $\mathscr{C}^{p,q}_{X(\omega)}([0,T];\R^{m \times d})$. Then, the rough and It{\^o} integrals of $Y$ against $X$ coincide $\P$-almost surely, that is,
    \begin{equation*}
      \int_0^t Y_s(\omega) \dd^{0,\pi} X_s(\omega) = \int_0^t Y_s(\omega) \dd \bX^0_s(\omega) = \bigg( \int_0^t Y_{s-} \dd X_s \bigg)(\omega)
    \end{equation*}
    holds for every $t \in [0,T]$ and for almost every $\omega \in \Omega$, where $\bX^0(\omega)$ is the canonical rough path lift of $X(\omega)$ as defined via Property $\gamma$-\textup{(RIE)} for $\gamma=0$.
  \end{enumerate}
\end{lemma}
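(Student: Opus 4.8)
The plan is to prove both parts by relating the pathwise (rough) constructions built from Property $\gamma$-\textup{(RIE)} for $\gamma = 0$ to the corresponding stochastic-analytic objects, exploiting that on a full-measure set the path $X(\omega)$ satisfies Property \textup{(RIE)}, so that all the deterministic machinery of Subsection~\ref{subsec:rough integration} applies $\omega$-by-$\omega$. For part~(i), the pathwise lift from~\eqref{eq: rough path lift via gamma RIE} with $\gamma = 0$ is, by definition, the uniform limit of the left-point Riemann sums $\int_0^\cdot X_s \otimes \d^{\pi^n} X_s = \sum_k X_{t^n_k} \otimes X_{t^n_k \wedge \cdot, t^n_{k+1} \wedge \cdot}$. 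On the other hand, since $\pi = (\pi^n)$ is a sequence of adapted partitions whose points are stopping times with vanishing mesh (a.s.), the classical theory of It\^o integration gives that these same left-point Riemann sums converge in probability, uniformly on $[0,T]$, to the It\^o integral $\int_0^\cdot X_{s-}\otimes \d X_s = \int_0^\cdot X_s \otimes \d X_s$ (the process being continuous). Passing to a subsequence along which the convergence is $\P$-a.s.\ uniform, the two limits must agree a.s.; hence $\X^0_{s,t}$ from~\eqref{eq: rough path lift via gamma RIE} equals $\int_s^t X_r \otimes \d X_r - X_s \otimes X_{s,t}$, which is exactly $\X_{s,t}$ from~\eqref{eq: Ito rough path}. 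Since both pairs $(X,\X)$ satisfy Chen's relation and the requisite variation bounds, they coincide as rough paths $\P$-a.s.

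For part~(ii), the first equality $\int_0^t Y_s \dd^{0,\pi} X_s = \int_0^t Y_s \dd \bX^0_s$ is an immediate consequence of Theorem~\ref{thm: general pathwise stochastic integral} applied pathwise for $\gamma = 0$, since $(Y(\omega),Y'(\omega)) \in \mathscr{C}^{p,q}_{X(\omega)}$ for a.e.\ $\omega$ and $X(\omega)$ satisfies Property $\gamma$-\textup{(RIE)}. For the second equality, I would again use that the rough integral, by its very definition~\eqref{eq: rough integration} together with the conclusion of Theorem~\ref{thm: general pathwise stochastic integral}, is the uniform limit (along $\pi$) of the left-point Riemann sums $\sum_k Y_{t^n_k} X_{t^n_k \wedge t, t^n_{k+1} \wedge t}$. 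Since $Y$ is adapted and each $\tau^n_k$ is a stopping time, the general theory of stochastic integration (e.g.\ convergence of Riemann sums along adapted partitions with vanishing mesh, see e.g.~\cite{Protter2005} or the references already cited) guarantees that these left-point sums converge in probability, uniformly on $[0,T]$, to the It\^o integral $\int_0^t Y_{s-}\dd X_s$. Matching the a.s.-uniform limit (on a full-measure set, after passing to a subsequence if necessary) with the pathwise-defined rough integral yields the claimed identity for all $t \in [0,T]$ simultaneously, by uniformity.

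The main obstacle is the rigorous justification that the \emph{same} sequence of left-point Riemann sums that converges pathwise (deterministically, by Property \textup{(RIE)}) also converges in the stochastic sense to the It\^o integral along \emph{adapted} partitions whose endpoints are stopping times with merely a.s.-vanishing mesh; one must be careful that the partitions are random, so the usual statements about Riemann-sum approximations of It\^o integrals along deterministic partitions do not apply verbatim. This is handled by invoking the classical result that for a continuous semimartingale $X$ and an adapted left-continuous (here continuous) integrand $Y$, the Riemann sums $\sum_k Y_{\tau^n_k}(X_{\tau^n_{k+1}} - X_{\tau^n_k})$ converge to $\int_0^\cdot Y_{s-}\dd X_s$ uniformly on compacts in probability whenever $\sup_k |\tau^n_{k+1} - \tau^n_k| \to 0$ in probability; see, e.g., \cite[Theorem~II.21]{Protter2005}. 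Once both limits are identified on a common full-measure set via a subsequence argument, the rest is routine.
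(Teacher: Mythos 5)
Your proposal is correct and follows essentially the same route as the paper: for both parts you identify the pathwise limit of left-point Riemann sums (coming from Property $\gamma$-\textup{(RIE)} with $\gamma=0$ and Theorem~\ref{thm: general pathwise stochastic integral}) with the UCP limit from the stochastic theory (via \cite[Ch.~II, Theorem~21]{Protter2005} for stopping-time partitions), and conclude by uniqueness of limits after passing to an a.s.-convergent subsequence. The only noticeable difference is that you spell out the subsequence argument and the concern about random partitions more explicitly than the paper does, but the underlying logic is identical.
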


\begin{proof}
  \emph{(i):} By construction, the pathwise rough integral $\int_0^t X_r(\omega) \otimes \d^{0,\pi} X_r(\omega)$ constructed via Property $\gamma$-\textup{(RIE)} for $\gamma = 0$ is given by the limit as $n \to \infty$ of left-point Riemann sums:
  \begin{equation*}
    \sum_{k=0}^{N_n-1} X_{\tau^n_{k+1}(\omega)}\otimes X_{\tau^n_k(\omega) \wedge t, \tau^n_{k+1}(\omega) \wedge t}(\omega).
  \end{equation*}
  It is well-known that these Riemann sums also converge uniformly in probability to the It{\^o} integral $\int_0^t X_r \otimes \d X_r$ and the result thus follows from the (almost sure) uniqueness of limits.

  \emph{(ii):} By, e.g., \cite[Ch.~II, Theorem~21]{Protter2005}, we have that
  \begin{equation*}
    \sum_{k=0}^{N_n-1} Y_{\tau_k^n} X_{\tau_k^n \wedge t, \tau_{k+1}^n \wedge t} \, \longrightarrow \, \int_0^t Y_{s-} \dd X_s \qquad \text{as} \quad n \to \infty,
  \end{equation*}
  where the convergence holds uniformly in $t \in [0,T]$ in probability. By taking a subsequence if necessary, we can then assume that the uniform convergence holds almost surely. On the other hand, by Theorem~\ref{thm: general pathwise stochastic integral}, we know that, for almost every $\omega \in \Omega$,
  \begin{equation*}
    \sum_{k=0}^{N_n-1} Y_{\tau_k^n(\omega)}(\omega) X_{\tau_k^n(\omega) \wedge t, \tau_{k+1}^n(\omega) \wedge t}(\omega) \, \longrightarrow \, \int_0^t Y_s(\omega) \dd \bX_s(\omega) \qquad \text{as} \quad n \to \infty,
  \end{equation*}
  uniformly for $t \in [0,T]$. The result thus follows by the uniqueness of limits and the remaining equality by Theorem~\ref{thm: general pathwise stochastic integral}.
\end{proof}

Next we consider Stratonovich integration, which corresponds to $\gamma=\frac{1}{2}$. It is well-known that the semimartingale $X$ can be lifted to a random rough path via Stratonovich integration, see e.g.~\cite{Coutin2005}, by defining $\bX = (X,\X) \in \mathcal{C}^p([0,T];\R^d)$, $\P$-a.s., for any $p \in (2,3)$, where
\begin{equation}\label{eq: Stratonovich rough path}
  \X_{s,t} := \int_s^t (X_{r} - X_s) \otimes \circ \d X_r = \int_s^t X_r \otimes \circ \d X_r - X_s \otimes X_{s,t}, \qquad (s,t) \in \Delta_T.
\end{equation}

\begin{lemma}\label{lem: Stratonovich integration}
  Let $p \in (2,3)$ and let $\pi = (\pi^n)_{n\in \N}$ be a sequence of adapted partitions with $\pi^n = \{\tau^n_k\}_{k\in \N}$, $n \in \N$, such that each $\tau^n_k$ is a stopping time and such that for almost every $\omega \in \Omega$, $(\pi^n(\omega))_{n \in \N}$ is a sequence of (finite) partitions of $[0,T]$ with vanishing mesh size.

  Let $X$ be a $d$-dimensional continuous semimartingale, and suppose that the sample path $X(\omega)$ satisfies Property $\gamma$-\textup{(RIE)} relative to $\gamma = \frac{1}{2}$, $p$ and $(\pi^n(\omega))_{n \in \N}$.
  \begin{enumerate}
    \item[(i)] The random rough paths $\bX = (X,\X)$, with $\X$ defined pathwise via~\eqref{eq: rough path lift via gamma RIE} for $\gamma = \frac{1}{2}$, and with $\X$ defined by stochastic integration as in~\eqref{eq: Stratonovich rough path}, coincide $\P$-almost surely.

    \item[(ii)] Let $(Y,Y')$ be a continuous semimartingale. Suppose that, for almost every $\omega \in \Omega$, $(Y(\omega), Y'(\omega))$ is a controlled path in $\mathscr{C}^p_{X(\omega)}([0,T];\R^{m \times d})$. Then the rough and Stratonovich integrals of $Y$ against $X$ coincide $\P$-almost surely, that is,
    \begin{equation*}
       \int_0^t Y_s(\omega) \dd^{\frac{1}{2},\pi} X_s(\omega) =\int_0^t Y_s(\omega) \dd \bX^{\frac{1}{2}}_s(\omega) = \Big( \int_0^t Y_s \circ \d X_s \Big) (\omega)
    \end{equation*}
    holds for every $t \in [0,T]$ and for almost every $\omega \in \Omega$, where $\bX^{\frac{1}{2}}(\omega)$ is the canonical rough path lift of $X(\omega)$ as defined in Proposition~\ref{prop: rough path lift under gamma RIE}, using Property $\gamma$-\textup{(RIE)} for $\gamma = \frac{1}{2}$.
  \end{enumerate}
\end{lemma}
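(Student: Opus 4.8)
The plan is to mirror the proof of Lemma~\ref{lem: Ito integration}, replacing the left-point Riemann sums by the symmetric (trapezoidal) sums associated with $\gamma = \frac{1}{2}$, and to use as the sole additional probabilistic ingredient the classical fact that, for continuous semimartingales, sums of products of increments converge uniformly in probability to the quadratic covariation along adapted partitions whose points are stopping times and whose mesh vanishes almost surely. Throughout, $\bX^{\frac{1}{2}}$ is already known to be a genuine rough path by Proposition~\ref{prop: rough path lift under gamma RIE}, and the Stratonovich lift in~\eqref{eq: Stratonovich rough path} is a rough path by standard theory, so only their second levels need to be matched.

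For part~(i) I would first observe that, by Property $\gamma$-\textup{(RIE)} for $\gamma = \frac{1}{2}$, the indefinite integral $t \mapsto \int_0^t X_r \otimes \d^{\frac{1}{2},\pi} X_r$ is, for a.e.\ $\omega$, the uniform-in-$t$ limit of
\begin{equation*}
  \sum_{k=0}^{N_n-1} \big(X_{\tau^n_k} + \tfrac{1}{2} X_{\tau^n_k,\tau^n_{k+1}}\big) \otimes X_{\tau^n_k \wedge t, \tau^n_{k+1} \wedge t} = \sum_{k=0}^{N_n-1} X_{\tau^n_k} \otimes X_{\tau^n_k \wedge t, \tau^n_{k+1} \wedge t} + \tfrac{1}{2} \sum_{k=0}^{N_n-1} X_{\tau^n_k \wedge t, \tau^n_{k+1} \wedge t} \otimes X_{\tau^n_k \wedge t, \tau^n_{k+1} \wedge t}.
\end{equation*}
As $n \to \infty$ the first sum converges uniformly in probability to the It\^o integral $\int_0^t X_r \otimes \d X_r$ (e.g.\ by \cite[Ch.~II, Theorem~21]{Protter2005}), and the second, componentwise, to $\frac{1}{2}[X^i,X^j]_t$, so the full sum tends u.c.p.\ to $\int_0^t X_r \otimes \d X_r + \frac{1}{2}[X]_t = \int_0^t X_r \otimes \circ \d X_r$. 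Passing to a subsequence along which this convergence is almost sure and uniform in $t$, uniqueness of limits identifies $\int_0^\cdot X_r(\omega) \otimes \d^{\frac{1}{2},\pi} X_r(\omega)$ with $\int_0^\cdot X_r(\omega) \otimes \circ \d X_r(\omega)$ for a.e.\ $\omega$; subtracting $\int_0^s$, using additivity of the Stratonovich integral and $\int_s^t X_s \otimes \circ \d X_r = X_s \otimes X_{s,t}$, the defining formula~\eqref{eq: rough path lift via gamma RIE} for $\X^{\frac{1}{2}}$ collapses to~\eqref{eq: Stratonovich rough path}, which is the claim.

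For part~(ii), Theorem~\ref{thm: general pathwise stochastic integral} gives, for a.e.\ $\omega$, that the pathwise integral equals the rough integral $\int_0^t Y_s(\omega) \dd \bX^{\frac{1}{2}}_s(\omega)$ and is the uniform-in-$t$ limit of $\sum_{k} \big(Y_{\tau^n_k} + \frac{1}{2} Y_{\tau^n_k,\tau^n_{k+1}}\big) X_{\tau^n_k \wedge t, \tau^n_{k+1} \wedge t}$, which I would split as $\sum_k Y_{\tau^n_k} X_{\tau^n_k \wedge t, \tau^n_{k+1} \wedge t} + \frac{1}{2} \sum_k Y_{\tau^n_k \wedge t, \tau^n_{k+1} \wedge t} X_{\tau^n_k \wedge t, \tau^n_{k+1} \wedge t}$, the replacement of $Y_{\tau^n_k,\tau^n_{k+1}}$ by $Y_{\tau^n_k \wedge t, \tau^n_{k+1} \wedge t}$ costing only a single boundary term that vanishes uniformly in $t$ by continuity of the sample paths. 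By \cite[Ch.~II, Theorem~21]{Protter2005} the first sum converges uniformly in probability to $\int_0^t Y_{s-}\,\d X_s$, and, since $(Y,Y')$ is a continuous semimartingale, the second to $\frac{1}{2}[Y,X]_t$, so the whole expression tends u.c.p.\ to $\int_0^t Y_{s-}\,\d X_s + \frac{1}{2}[Y,X]_t = \int_0^t Y_s \circ \d X_s$. Invoking uniqueness of limits once more, against the a.s.\ pathwise convergence of Theorem~\ref{thm: general pathwise stochastic integral} and after passing to an a.s.-convergent subsequence on the probabilistic side, yields the asserted chain of equalities for all $t \in [0,T]$ and a.e.\ $\omega$.

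I expect the main obstacle to be the clean justification of the u.c.p.\ convergence $\sum_k Z_{\tau^n_k \wedge t,\tau^n_{k+1}\wedge t}\, W_{\tau^n_k\wedge t,\tau^n_{k+1}\wedge t} \to [Z,W]_t$ for continuous semimartingales $Z,W$ along the \emph{adapted} partitions $\{\tau^n_k\}$ built from stopping times with vanishing mesh: this is classical (polarization together with the definition of quadratic variation as a u.c.p.\ limit), but it must be invoked with care because the partition points are random and because the almost-sure pathwise limit, valid along the full sequence, has to be reconciled with the merely in-probability limit, which forces a subsequence argument before uniqueness of limits can be applied.
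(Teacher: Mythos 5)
Your proof is correct and follows essentially the same route as the paper's: Theorem~\ref{thm: general pathwise stochastic integral} (resp.\ Property $\gamma$-\textup{(RIE)}) identifies the pathwise integral as an almost sure uniform limit of the $\gamma=\frac12$ Riemann sums, the probabilistic theory identifies the same sums as converging u.c.p.\ to the Stratonovich integral, and a subsequence plus uniqueness of limits concludes. The only stylistic difference is that you unwind the trapezoidal sum explicitly into a left-point Itô part plus half a quadratic-covariation part and cite the two convergence results separately, whereas the paper invokes the Protter theorems on Stratonovich-type Riemann sums directly as a black box; the underlying ingredients are the same, and your version is, if anything, slightly more careful about the subsequence step in part~(i) and about the small boundary term arising from the untruncated $Y_{\tau^n_k,\tau^n_{k+1}}$ factor.
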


\begin{proof}
  \emph{(i):} By construction, the pathwise rough integral $\int_0^t X_r(\omega) \otimes \d^{\frac{1}{2},\pi} X_r(\omega)$ constructed via Property $\gamma$-\textup{(RIE)} for $\gamma = \frac{1}{2}$ is given by the limit as $n \to \infty$ of mid-point Riemann sums:
  \begin{equation*}
    \sum_{k=0}^{N_n-1} \frac{1}{2}(X_{\tau^n_k(\omega)}(\omega) + X_{\tau^n_{k+1}(\omega)}(\omega)) \otimes X_{\tau^n_k(\omega) \wedge t, \tau^n_{k+1}(\omega) \wedge t}(\omega).
  \end{equation*}
  It is well-known that these Riemann sums also converge uniformly in probability to the Stratonovich integral $\int_0^t X_r \otimes \circ \d X_r$ (see e.g.~\cite[Chapter~II, Theorem~21, Theorem~22]{Protter2005}), and the result thus follows from the (almost sure) uniqueness of limits.

  \emph{(ii):} By, e.g.~\cite[Chapter~II, Theorem~21, Theorem~23]{Protter2005}, we have that
  \begin{equation*}
    \sum_{k=0}^{N_n-1} \frac{1}{2}(Y_{\tau^n_k} + Y_{\tau^n_{k+1}}) X_{\tau^n_k \wedge t, \tau^n_{k+1} \wedge t} \longrightarrow \int_0^t Y_s \circ \d X_s \qquad \text{as} \quad n \to \infty,
  \end{equation*}
  where the convergence holds uniformly (in $t \in [0,T]$) in probability. By taking a subsequence if necessary, we can then assume that the (uniform) convergence holds almost surely. On the other hand, by Theorem~\ref{thm: general pathwise stochastic integral}, we know that for almost every $\omega \in \Omega$,
  \begin{equation*}
    \sum_{k=0}^{N_n-1} \frac{1}{2}(Y_{\tau^n_k(\omega)}(\omega) + Y_{\tau^n_{k+1}(\omega)}(\omega)) X_{\tau^n_k(\omega) \wedge t, \tau^n_{k+1}(\omega) \wedge t}(\omega) \longrightarrow \int_0^t Y_s(\omega) \dd \bX^{\frac{1}{2}}_s(\omega) \quad \text{as} \quad n \to \infty
  \end{equation*}
  uniformly in $t \in [0,T]$. The result thus follows by the uniqueness of limits. Hence, the remaining equality follows by Theorem~\ref{thm: general pathwise stochastic integral}.
\end{proof}

\begin{remark}
  Note that the rough path lift of a continuous path uniquely determines the quadratic variation and the L{\'e}vy area, assuming that the rough path is defined as a limit of general Riemann sums for $\gamma \neq \frac{1}{2}$. Indeed, the quadratic variation corresponds to the symmetric part of a rough path and the L{\'e}vy area to the antisymmetric part, cf. Remark~\ref{rem:rough path and levy area}. Hence, Lemma~\ref{lem: Ito integration} and~\ref{lem: Stratonovich integration} imply that the quadratic variation and the L{\'e}vy area of sample paths of continuous semimartingales are, almost surely, independent of the particular sequence of partitions along which they are defined. As seen in Section~\ref{sec: invariance}, the invariance of quadratic variation and L{\'e}vy area is essentially equivalent to the statement that sample paths of continuous semimartingales possess quadratic and L{\'e}vy roughness.
\end{remark}

\bibliographystyle{siam}
\bibliography{references}

\end{document}